\newcommand{\llbracket}{[\![}
\newcommand{\rrbracket}{]\!]}
\newcommand{\te}{\textstyle}
\newcommand{\dl}{\mathrm{d}\!\mathrm{l}}
\newcommand{\F}{{\mathbb{F}}}
\newcommand{\N}{{\mathbb{N}}}
\newcommand{\Q}{{\mathbb{Q}}}
\newcommand{\oQ}{\overline{\Q}}
\newcommand{\R}{{\mathbb{R}}}
\newcommand{\Z}{{\mathbb{Z}}}
\newcommand{\halpha}{\hat{\alpha}}
\newcommand{\bfa}{\underline{a}}
\newcommand{\bfb}{\underline{b}}
\newcommand{\bfc}{\underline{c}}
\newcommand{\bfn}{\underline{n}}
\newcommand{\bfr}{\underline{r}}
\newcommand{\bfx}{\underline{x}}
\newcommand{\uomega}{\underline{\omega}}
\newcommand{\alg}{\mathrm{alg}}
\newcommand{\id}{\mathrm{id}}
\newcommand{\re}{\mathrm{ref}}
\renewcommand{\mod}{\;\mathrm{mod}\;}
\newcommand{\proj}{\mathrm{proj}}
\newcommand{\spec}{\mathrm{spec}\,}
\newcommand{\End}{\mathrm{End}\,}
\newcommand{\Hom}{\mathrm{Hom}}
\newcommand{\Ker}{\mathrm{Ker}\,}
\newcommand{\tg}{\tilde{g}}
\newcommand{\tR}{\tilde{R}}
\newcommand{\talpha}{\tilde{\alpha}}
\newcommand{\Fh}{{\mathcal F}}
\newcommand{\Gh}{{\mathcal G}}
\newcommand{\Vh}{{\mathcal V}}
\newcommand{\eo}{\mathfrak{o}}
\newcommand{\oalpha}{\overline{\alpha}}
\newcommand{\oI}{\overline{I}}
\newcommand{\oR}{\overline{R}}
\newcommand{\oF}{\overline{\mathbb{F}}}
\newcommand{\hA}{\widehat{A}}
\newcommand{\oa}{\overline{a}}
\newcommand{\tc}{\tilde{c}}
\newcommand{\hU}{\hat{U}}
\newcommand{\ophi}{\overline{\phi}}
\newcommand{\ohne}{\setminus}
\newcommand{\silo}{\xrightarrow{\sim}}
\newcommand{\tei}{\, | \,}
\newcommand{\bfdot}{\hullet}
\newcommand{\hullet}{\raisebox{0.05cm}{$\,\scriptscriptstyle \bullet\,$}}
\newcommand{\verk}{\mbox{\scriptsize $\,\circ\,$}}
\newtheorem{theorem}{Theorem}[section]
\newtheorem{lemma}[theorem]{Lemma}
\newtheorem{proposition}[theorem]{Proposition}
\newtheorem{definition}[theorem]{Definition}
\newtheorem{defprop}[theorem]{Definition/Proposition}
\newtheorem{cor}[theorem]{Corollary}
\newtheorem{remark}[theorem]{Remark}
\newenvironment{rem}{\noindent {\bf Remark}}{}
\newenvironment{examp}{\noindent {\bf Example}}{}
\newcommand{\beweisende}{\hspace*{\fill} $\Box$}
\begin{document}
\title{ Witt vector rings and the relative de~Rham Witt complex }
\author{Joachim Cuntz and Christopher Deninger}
\thanks{Research supported by DFG through CRC 878, by ERC through AdG 267079 and by the MPIM Bonn}
\maketitle
\begin{center}
with an appendix by {\small \MakeUppercase{Umberto Zannier}}
\end{center}

\begin{abstract}
In this paper we develop a novel approach to Witt vector rings and to the (relative) de Rham Witt complex. We do this in the generality of arbitrary commutative algebras and arbitrary truncation sets. In our construction of Witt vector rings the ring structure is obvious and there is no need for universal polynomials. Moreover a natural generalization of the construction easily leads to the relative de Rham Witt complex. Our approach is based on the use of free or at least torsion free presentations of a given commutative ring $R$ and it is an important fact that the resulting objects are independent of all choices. The approach via presentations also sheds new light on our previous description of the ring of $p$-typical Witt vectors of a perfect $\mathbb{F}_p$-algebra as a completion of a semigroup algebra. We develop this description in different directions. For example, we show that the semigroup algebra can be replaced by any free presentation of $R$ equipped with a linear lift of the Frobenius automorphism. Using the result in the appendix by Umberto Zannier we also extend the description of the Witt vector ring as a completion to all $\overline{\mathbb{F}}_p$-algebras with injective Frobenius map. 
\end{abstract}

\section*{Introduction}

Motivated by the problem of constructing field extensions of degree $p^n$, Witt introduced the ring of Witt vectors of a given ring. In his ingenious construction the ring operations are defined using infinite sequences of universal polynomials. Using Witt vector rings one may for example pass from perfect fields $k$ of positive characteristic to unramified complete discrete valuation rings $R$ with residue field $k$ and quotient field of characteristic zero. By now, Witt vector rings  and their more general variants are a classical tool in many branches of mathematics ranging from algebra and algebraic number theory over arithmetic geometry to homotopy theory. However the construction of the ring of Witt vectors and, even more so, the construction of the associated de~Rham-Witt complex is still nowadays considered to be not so easy.

In this paper we develop a novel approach to Witt vector rings and to the (relative) de~Rham Witt complex. We do this in the generality of arbitrary commutative algebras and arbitrary truncation sets. In particular the big and the $p$-isotypical theories are covered. In our construction of Witt vector rings the ring structure is obvious and there is no need for universal polynomials. Moreover a natural generalization of the construction easily leads to the relative de~Rham Witt complex. \\
Our approach is based on the use of free or at least torsion free presentations of a given commutative ring $R$ and it is an important fact that the resulting objects are independent of all choices. The approach via presentations also sheds new light on the description in \cite{CD} of the ring of $p$-typical Witt vectors of a perfect $\F_p$-algebra as a completion of a semigroup algebra. We develop this description in different directions. For example, we show that the semigroup algebra can be replaced by any free presentation of $R$ equipped with a linear lift of the Frobenius automorphism. Using the result in the appendix by Umberto Zannier we also extend the description of the Witt vector ring as a completion to all $\oF_p$-algebras with injective Frobenius map.

The paper is self-contained and elementary. Except for the results comparing the objects that we obtain to the conventional ones, it can be read without knowing the classical theories in the title. For these we refer to \cite{T}, \cite{Har}, \cite{S}, \cite{Ha}, \cite{W1}, \cite{W2}, \cite{R}, \cite{H}, \cite{B} and the appendix of \cite{M} as far as Witt vector rings are concerned. For the de~Rham Witt theory we mention the works \cite{Ch}, \cite{I}, \cite{H}, \cite{LZ} and the references given there. More advanced topics are discussed in \cite{DK}, \cite{DLZ1}, \cite{DLZ2} for example.

Our constructions of Witt vector rings and of the de~Rham Witt complex are so simple that it is worthwhile to give the details in this introduction --- at least in the $p$-typical case and for $\Z$-algebras $R$. In case $R$ is an $\F_p$-algebra this gives a quick alternative approach to the original de~Rham Witt complex in \cite{I}. We will describe the construction of a commutative ring $E (R)$ and a differential graded ring $E \Omega^{\bfdot}_{R / \Z}$ with $E \Omega^0_{R / \Z} = E (R)$ and equipped with Frobenius and Verschiebung maps. The ring $E (R)$ is isomorphic to the $p$-typical Witt ring $W (R)$ and there is a canonical isomorphism:
\begin{equation} \label{eq:i1}
E \Omega^{\bfdot}_{R / \Z} \cong W \Omega^{\bfdot}_{R / \Z} := \varprojlim_n W_n \Omega^{\bfdot}_{R / \Z} \; .
\end{equation}
Here $W_n \Omega^{\bfdot}_{R / \Z}$ is the relative de~Rham Witt complex of Chatzistamatiou \cite{Ch} for the truncation set $S = \{ 1 , p , \ldots , p^{n-1} \}$. For an $\F_p$-algebra $R$ it is isomorphic to the complex $W_n \Omega^{\bfdot}_R$ of Bloch, Deligne and Illusie \cite{I}.\footnote{We use the notation $E (R)$ (with the name Ernst Witt in mind) in order to express the fact that $E (R)$ while being isomorphic to the usual Witt ring $W (R)$ is constructed in a quite different way.}

Let us now spell out the details of  the construction of $E (R)$ and of $E \Omega^{\bfdot}_{R / \Z}$. For a discrete ring $A$ we view $A^{\N_0}$ with the product topology as a topological ring under componentwise addition and multiplication. The ring endomorphism $F$ of $A^{\N_0}$ is defined to be the left shift
\[
F (a_0 , a_1 , \ldots) = (a_1 , a_2 , \ldots)\; .
\]
We obtain an additive self map $V$ of $A^{\N_0}$ by setting
\[
V (a_0 , a_1 , \ldots ) = p (0, a_0 , a_1 , \ldots)\; .
\]
For $a \in A$ set $\langle a \rangle = (a , a^p , a^{p^2} , \ldots) \in A^{\N_0}$. Thus we have
\[
V^n \langle a \rangle = p^n (0, \ldots , a , a^p , \ldots) \quad \text{with $a$ in $n$-th place.}
\]
For commutative $A$ consider the closed subgroup $X (A) \subset A^{\N_0}$ generated by all elements $V^n \langle a \rangle$ for $a \in A$ and $n \ge 0$. Because of the formula
\[
V^n \langle b \rangle V^m \langle c \rangle = p^n V^m \langle b^{p^{m-n}} c \rangle \quad \text{for} \; n \le m \, , \, b,c \in A
\]
$X (A)$ is actually a closed subring of $A^{\N_0}$. Similarly, for an ideal $I$ in $A$, the subgroup $X (I)$ of $X (A)$ is a closed ideal. The maps $F$ and $V$ leave $X (A)$ and $X(I)$ invariant and therefore pass to the quotient $X (A) / X (I)$.

For a commutative ring $R$ let $\Z R$ be the semigroup algebra on the multiplicative semigroup $(R , \hullet)$. It is the free $\Z$-module on symbols $[r]$ for $r \in R$ with multiplication induced by the rule $[r_1] [r_2] = [r_1 r_2]$. The natural map $\pi : \Z R \to R$ sending $[r]$ to $r$ leads to a presentation
\begin{equation} \label{eq:i1a}
0 \longrightarrow I \longrightarrow \Z R \longrightarrow R \longrightarrow 0 \; .
\end{equation}
The quotient
\begin{equation}
\label{eq:i2}
E (R) = X (\Z R) / X (I)
\end{equation}
is a topological ring which is equipped with self maps $V$ and $F$. By construction, the canonical Teichm\"uller map
\[
\langle \, \rangle : R \longrightarrow E (R) \, , \, \langle r \rangle = \langle [r] \rangle \mod X (I)
\]
is multiplicative and everything is functorial in $R$. We show that there are unique topological isomorphisms of rings $E (R) \cong W (R)$ which respect the Frobenius, Verschiebung and Teichm\"uller maps.

In our construction of the de~Rham Witt complex, for technical reasons we use the free commutative algebra $\Z [R]$ on the set $R$ instead of $\Z R$ i.e. the polynomial algebra over $\Z$ in indeterminates $t_r$ for $r \in R$. Mapping $t_r$ to $r$ gives a presentation (with a different $I$)
\[
0 \longrightarrow I \longrightarrow \Z [R] \longrightarrow R \longrightarrow 0
\]
and there is a canonical isomorphism
\begin{equation}
\label{eq:i3}
E (R) = X (\Z [R]) / X (I) \; .
\end{equation}
This is a non-trivial fact. The right hand side is equipped with self maps $F$ and $V$ and it is the target of a Teichm\"uller map given by $\langle r \rangle = \langle t_r \rangle \mod X (I)$. It follows from \eqref{eq:i3} that this map is multiplicative but it is not so easy to see this directly. We will think of $E (R)$ as the quotient \eqref{eq:i3} for the moment. \\
Set $\Omega = \Omega^{\bfdot}_{\Z [R] / \Z}$, the differential graded ring of relative differential forms of $\Z [R]$ over $\Z$ and set $\Omega_{\Q} = \Omega \otimes \Q$. We consider $\Omega^{\N_0}_{\Q}$ as a differential graded topological ring with the differential $\dl$ defined by the formula
\[
\dl ((\omega_n)) = (p^{-n} d \omega_n) \; .
\]
Note that the subring $\Omega^{\N_0}$ is  not $\dl$-invariant. Let $X^{\bfdot} (\Z [R])$ be the (topologically) closed differential graded subring of $(\Omega^{\N_0}_{\Q} , \dl)$ generated by $X (\Z [R])$. It turns out to lie in $\Omega^{\N_0}$. Moreover let $X^{\bfdot} (I , \Z [R])$ be the closed differential graded ideal of $X^{\bfdot} (\Z [R])$ generated by $X (I)$. We define
\[
E \Omega^{\bfdot}_{R / \Z} := X^{\bfdot} (\Z [R] ) / X^{\bfdot} (I, \Z [R]) \; .
\]
The maps $F$ and $V$ on $\Omega^{\N_0}$ leave $X^{\bfdot} (\Z [R])$ and $X^{\bfdot} (I, \Z [R])$ invariant and hence pass to $E \Omega^{\bfdot}_{R /\Z}$. Straightforward calculations show that all the usual relations of de~Rham Witt theory hold between $F$ and $V$. Using a basic result about Chatzistamatiou's de~Rham Witt complex based on the work of Langer and Zink it is not difficult to establish the isomorphism \eqref{eq:i1} mentioned above.



Motivated by topological Hochschild homology, Hesselholt \cite{H} defined an absolute de~Rham Witt complex $W \Omega^{\bfdot}_R$ for rings. It is the inital complex in his category of "Witt-complexes". There is a natural surjective map of Witt-complexes
\[
W \Omega^{\bfdot}_R \twoheadrightarrow E \Omega^{\bfdot}_{R / \Z}
\]
but it is not injective. For example $E \Omega^1_{\Z / \Z} = 0$ whereas $W \Omega^1_{\Z}$ is non-zero.

In \cite{Ch} Chatzistamatiou defined his relative de~Rham Witt complex for every $R_0$-algebra $R$ using quotients of Hesselholt's absolute de~Rham Witt complexes. More generally than \eqref{eq:i1}, we show that Chatzistamatiou's and our relative de~Rham Witt complexes are canonically isomorphic. 

We now turn to the second theme of the paper. We will state the results using the familiar $W$-notation but the proofs use the $E$-description of the Witt vector rings. As above, let $\Z R$ be the semigroup algebra of $(R , \hullet)$ and $I$ the kernel of the natural homomorphism $\Z R  \to R$.
In our previous note \cite{CD} we observed that the ring $W (R)$ of $p$-typical Witt vectors of a perfect $\F_p$-algebra $R$ is canonically isomorphic to a completion
\begin{equation}\label{eq:5n}
W (R) = \varprojlim_n \Z R / I^n \; .
\end{equation}
In fact we have $W_n (R)= \Z R / I^n$ canonically. In the last part of the present paper we generalize this result in two directions. Firstly, for a perfect $\F_p$-algebra $R$ we may work with more general presentations of the form
\[
0 \longrightarrow I \longrightarrow A \longrightarrow R \longrightarrow 0
\]
where $A$ has no $p$-torsion and is equipped with an isomorphism lifting the Frobenius isomorphism of $R$. Then Theorem \ref{t55} asserts that we have a natural isomorphism
\[
W (R) = \varprojlim_{\nu} A / I^{\nu} \; .
\]
This generalizes both \eqref{eq:5n} and a well known classical result, c.f. Corollary \ref{t65nn}. Secondly we consider non perfect $\F_p$-algebras $R$. The multiplicative Teichm\"uller map $R \hookrightarrow W (R)$ induces ring homomorphisms $\Z R \to W (R)$ and
\[
\Z R / I^n \to W_n (R)\; .
\]
In the non-perfect case the latter map is neither injective nor surjective in general. We now restrict attention to $\F_p$-algebras $R$ whose Frobenius endomorphism is injective e.g.~reduced algebras. Let $\phi$ be the lifted Frobenius endomorphism of $\Z R$ determined by the formula $\phi ([r]) = [r^p]$ for $r \in R$. For $n \ge 1$ we set
\[
I_n = \phi^{1-n} (I^n) = \{ a \in \Z R \mid \phi^{n-1} (a) \in I^n \} \; .
\]
This is an ideal in $\Z R$ which can be shown to have the alternative description
\[
I_n = \bigcup_{\nu \ge 1} \phi^{-\nu} (I^n) \; .
\]
We prove that for any $\F_p$-algebra $R$ with injective Frobenius map, the natural map $\Z R \to W_n (R)$ induces an inclusion of rings
\[
\beta_n : \Z R / I_n \hookrightarrow W_n (R) \; .
\]
If $R$ contains $\oF_p$ then the map $\beta_n$ is an isomorphism. Hence there is a topological isomorphism
\[
W (R) = \varprojlim_n \Z R / I_n \; .
\]

The proof of the second assertion requires a result about polynomials which is proved in the Appendix by Umberto Zannier.

The contents of the individual sections are the following: Sections \ref{sec:1} and \ref{sec:2} develop the basics of our approach to Witt vector rings and explain how the usual theory is recovered. We also remark on a way to extend our version of Witt vector theory from commutative to non-commutative rings. Section \ref{sec:3} gives the simple new approach to relative de~Rham Witt theory in the same spirit. At the end we make some remarks on the overconvergent theory. Sections \ref{sec:4} and \ref{sec:5} return to Witt vector theory from our perspective explaining how some further required properties are proved in our setup. In section \ref{sec:6} we compare the rings $E (R)$ for perfect $\F_p$-algebras $R$ directly to $\varprojlim_n \Z R / I^n$ without the detour over ordinary Witt vector rings. We also discuss more general presentations. Finally section \ref{sec:7} is concerned with non-perfect $\F_p$-algebras as above.

We would like to thank Lars Hesselholt for a helpful exchange and Umberto Zannier for contributing the appendix. We are grateful to Jim Borger for interesting comments and for pointing out \cite{Ch} to us after receiving the first version of the paper.

\section{Preliminaries} \label{sec:1}
In this section we introduce some notation following \cite{H} and state a number of elementary facts which are used in the sequel. With a few exceptions, in this paper all rings are commutative and possibly non-unital. Statements hold both in the category of rings and the category of unital rings.

A truncation set $S$ is a non-empty subset of the positive integers $\N$ such that for $n \in S$ every positive divisor $d$ of $n$ is in $S$ as well. In particular $1 \in S$. We will view a truncation set $S$ as a partially ordered set under divisibility and write $n \mid m$ if $n$ divides $m$ and $n \| m$ if in addition $n \neq m$. For $n \in S$ the set $S / n = \{ \nu \in \N \mid \nu n \in S \}$ is a truncation set as well. We have $(S / n) / m = S / nm$. The greatest common divisor of $n ,m \in \N$ is denoted by $(n, m)$ and the least common multiple by $[n,m]$.

For a ring $A$ which for the moment may be non-commutative, consider $A^S$ as a ring under componentwise addition and multiplication. The family of rings $(A^S)$ for varying truncation sets $S$ carries the following extra structures:

(1) For every $n \in S$ the surjective Frobenius homomorphism of rings
\[
F_n : A^S \longrightarrow A^{S/n} \quad \text{with} \; F_n ((a_{\nu})_{\nu \in S}) = (a_{\nu n})_{\nu \in S / n}
\]
(2) For every $n \in S$ the additive Verschiebung
\[
V_n : A^{S/n} \longrightarrow A^S \quad \text{with} \; V_n ((a_{\nu})_{\nu \in S / n}) = n (\delta_{n \mid \nu} a_{\nu / n})_{\nu \in S} \; .
\]
Here $\delta_{n \mid \nu} = 1$ if $n \mid \nu$ and $\delta_{n \mid \nu} = 0$ if $n \nmid \nu$.

(3) For truncation sets $T \subset S$, the surjective projection homomorphism \\
$A^S \to A^T$.

The proofs of the following assertions are straightforward.

\begin{proposition} \label{t1} 
a) For $nm \in S$ we have
\[
F_n F_m = F_{nm} : A^S \longrightarrow A^{S/nm} \quad \text{and} \quad V_n V_m = V_{nm} : A^{S/nm} \longrightarrow A^S\; .
\]
b) For $n,m \in S$ set $n' = n / (n,m)$ and $m' = m / (n,m)$. Then we have
\[
F_m V_n = (n,m) V_{n'} F_{m'} : A^{S/n} \longrightarrow A^{S/m} \; .
\]
c) For $n \in S, \bfa \in A^S , \bfb \in A^{S/n}$ and $\bfc \in A^{S/m}$
\[
V_n (F_n (\bfa) \bfb) = \bfa V_n (\bfb) \quad \text{and} \quad V_n (\bfa F_n (\bfb)) = V_n (\bfa) \bfb \; .
\]
Hence
\[
V_n (\bfb) V_m (\bfc) = (n,m) V_{[n,m]} (F_{m'} (\bfb) F_{n'} (\bfc))
\]
and for $l \in \N$ also
\[
V_n (\bfb)^l = n^{l-1} V_n (\bfb^l) \; .
\]
d) For $n \in T \subset S$ the following diagrams commute
\[
\xymatrix{
A^{S/n} \ar[r]^{V_n} \ar[d] & A^S \ar[d] \\
A^{T/n} \ar[r]^{V_n}  & A^T
} \quad \text{and} \quad
\xymatrix{
A^S \ar[r]^{F_n} \ar[d] & A^{S/n} \ar[d] \\
A^T \ar[r]^{F_n} & A^{T/n}
}
\]
For $n \in S \setminus T$ the composition $A^{S/n} \xrightarrow{V_n} A^S \to A^T$ is the zero map. 
\end{proposition}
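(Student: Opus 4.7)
The plan is to verify each assertion by direct componentwise calculation from the definitions of $F_n$ and $V_n$, using only elementary divisibility considerations. Nothing in the proposition requires a clever idea; the main point is to keep track of indices carefully.

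For part (a), I would evaluate at $\nu \in S/nm$: $(F_n F_m \bfa)_\nu = (F_m \bfa)_{\nu n} = a_{\nu nm} = (F_{nm}\bfa)_\nu$. For the Verschiebung, evaluating $V_n V_m$ at $\nu \in S$ gives $n\delta_{n\mid\nu}\cdot m \delta_{m \mid \nu/n}\, a_{\nu/nm}$, and since $n\mid\nu$ together with $m\mid \nu/n$ is equivalent to $nm\mid\nu$, this equals $(V_{nm}\bfa)_\nu$.

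For part (b), set $d = (n,m)$, so $n = dn'$ and $m = dm'$ with $(n',m') = 1$. The $\nu$-component of $F_m V_n\bfa$ (for $\nu \in S/m$) is $n\,\delta_{n\mid \nu m}\,a_{\nu m/n}$. The key observation is that $n\mid\nu m$ is equivalent to $n'\mid\nu m'$, and since $(n',m') = 1$ this is equivalent to $n'\mid\nu$; in that case $\nu m/n = (\nu/n')m'$. The right-hand side $dV_{n'}F_{m'}\bfa$ at $\nu$ equals $dn'\,\delta_{n'\mid\nu}\,a_{(\nu/n')m'}$, and since $dn' = n$, the two sides agree. This is the most delicate step, but it is still mechanical.

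Part (c) begins with the projection formulas. Both sides of $V_n(F_n(\bfa)\bfb) = \bfa V_n(\bfb)$ vanish at $\nu$ unless $n\mid\nu$, in which case both equal $n a_\nu b_{\nu/n}$; the second projection formula is symmetric. To derive the product formula, I would combine these with (a) and (b) as
\[
V_n(\bfb)V_m(\bfc) = V_n\bigl(\bfb\cdot F_nV_m(\bfc)\bigr) = d\, V_n\bigl(\bfb\cdot V_{m'}F_{n'}(\bfc)\bigr) = d\, V_n V_{m'}\bigl(F_{m'}(\bfb)F_{n'}(\bfc)\bigr),
\]
and then $V_nV_{m'} = V_{nm'} = V_{[n,m]}$ by (a). The power identity follows from $(n\delta_{n\mid\nu} b_{\nu/n})^l = n^{l-1}\cdot n\delta_{n\mid\nu}(b^l)_{\nu/n}$, or equivalently by induction from $V_n(\bfb)V_n(\bfc) = nV_n(\bfb\bfc)$, which is the product formula with $n=m$.

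For part (d), commutativity of both squares is immediate from the coordinatewise definitions, since the operations only involve indices in $T$ when the index set is restricted from $S$ to $T$. For the final statement, observe that $V_n\bfb$ is supported on multiples of $n$ in $S$; if $n \in S\setminus T$ then no multiple of $n$ lies in $T$, because $T$ is closed under taking divisors. Hence the composition $A^{S/n}\to A^S\to A^T$ is zero. The only potential obstacle throughout is notational, and the identity $n\mid\nu m \Leftrightarrow n/(n,m)\mid\nu$ in part (b) is the one point that must be handled with care.
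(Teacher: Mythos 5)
Your verification is correct, and it is exactly the kind of direct componentwise check the paper has in mind when it says "The proofs of the following assertions are straightforward" and omits them. Every step — the index bookkeeping in (a) and (b), the observation $n\mid\nu m\Leftrightarrow n'\mid\nu$ via coprimality of $n',m'$, the derivation of the product rule from the projection formulas together with (a) and (b), and the truncation-set argument for the last claim in (d) — is sound and matches the intended elementary argument.
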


We give $A$ the discrete topology and $A^S$ the product topology. Then $A^S$ is a Hausdorff topological ring and the maps $F_n$ and $V_n$ are continuous.
A sequence $\bfx^{(\nu)}$in $A^S$ converges to $\bfx \in A^S$ if and only if for each $n \in S$ the sequence $x^{(\nu)}_n$ in $A$ becomes stationary with value $x_n$. In particular a series $\sum_{\nu} \bfa^{(\nu)}$ converges in $A^S$ if and only if for each $n \in S$ almost all $a^{(\nu)}_n$ are zero. Hence convergence and the value of the limit do not depend on the order of summation. For $\bfa \in A^{S/n}$ we have $(V_n (\bfa))_m = 0$ if $n \nmid m$. Hence any series of the form $\sum_{n \in S} V_n (\bfa^{(n)})$ with $\bfa^{(n)} \in A^{S/n}$ is unconditionally convergent in $A^S$.

From now on let $A$ be a commutative ring and $I$ an ideal in $A$. We view $I$ as a ring too. We write
\[
V_n \langle a \rangle := V_n ((a^{\nu})_{\nu \in S/n}) = n (\delta_{n \mid \nu} a^{\nu / n})_{\nu \in S} \quad \text{in} \; A^S\; .
\]

\begin{defprop}
\label{tx2}
Let $X_S (A)$ be the closed additive subgroup of $A^S$ generated by the elements $V_n \langle a\rangle$ for $n \in S$ and $a \in A$. Then the following assertions hold:\\
1) $X_S (A)$ is a closed subring of $A^S$ and $X_S (I)$ is a closed ideal in $X_S (A)$. \\
2) The continuous maps $V_m$ and $F_m$ restrict to continuous maps
\[
V_m : X_{S / m} (A) \longrightarrow X_S (A) \quad \text{and} \quad F_m : X_S (A) \longrightarrow X_{S/m} (A) \; .
\]
Moreover there is a multiplicative homomorphism
\[
\langle \, \rangle_S : A \longrightarrow A^S \; \text{given by} \; \langle a \rangle_S = V_1 \langle a \rangle = (a^{\nu})_{\nu \in S} \; .
\]
We have $V_n \langle a \rangle = V_n (\langle a \rangle_{S/n})$ in $A^S$.\\
3) For a finite truncation set $T \subset S$ the projection $A^S \to A^T$ restricts to a continuous surjective homomorphism $X_S (A) \to X_T (A)$. The induced map
\[
X_S (A) = \varprojlim_T X_T (A)
\]
is an isomorphism of topological rings. Here $T$ runs over the directed poset under inclusion of all finite truncation subsets of $S$.
\end{defprop}

It it clear that the analogues of all relations in Proposition \ref{t1} hold for the maps $F_m , V_n$ between the rings $X_S (A)$ resp. $X_S (I)$.

\begin{proof}
Using Proposition \ref{t1} we obtain
\begin{equation}
\label{eq:1}
V_n \langle b\rangle V_m \langle c\rangle = (n,m) V_{[n,m]} \langle b^{m'} c^{n'} \rangle \quad \text{for} \; b, c \in A \, , \, n,m \in S \; .
\end{equation}
Recall here that $n' = n / (n,m)$ and $m' = m / (n,m)$. This implies 1). Similarly we find
\begin{equation}
\label{eq:2}
V_m (V_n \langle a\rangle) = V_{mn} \langle a\rangle \quad \text{for} \; a \in A \; \text{and} \; nm \in S
\end{equation}
and
\begin{equation}
\label{eq:3}
F_m (V_n \langle a \rangle) = (n,m) V_{n'} \langle a^{m'} \rangle \quad \text{for $a \in A$ and $n,m \in S$} \; .
\end{equation}
Hence we get 2). The first part of assertion 3) holds because $V_n (\langle a \rangle_{S/n})$ projects to $V_n (\langle a \rangle_{T/n})$ if $n \in T \subset S$. As for the second part, note that the projective limit topology of
\[
\varprojlim_{T \subset S} X_T (A) \subset \varprojlim_{T \subset S} A^T = A^S
\]
agrees with the subspace topology in $A^S$. Moreover by the surjectivity assertion the continuous injection
\[
i : X_S (A) \longrightarrow \varprojlim_{T \subset S} X_T (A) \subset A^S
\]
has dense image. Since $X_S (A)$ is closed in $A^S$ we are done.
\end{proof}

We are particularly interested in the unconditionally convergent series
\begin{equation}
\label{eq:6a}
\Psi_S (\bfa) = \sum_{n \in S} V_n \langle a_n \rangle \quad \text{for} \; \bfa = (a_n) \in A^S \; .
\end{equation}
It takes values in $X_S (A)$ and its $m$-th component for $m \in S$ is given by
\begin{equation}
\label{eq:5}
\Psi_S (\bfa)_m = \sum_{n \mid m} n a^{m/n}_n \; .
\end{equation}

Note that $X_S (A)$ is the closed subgroup generated by the image of $\Psi_S$. For truncation sets $T \subset S$ the following diagram commutes where the vertical maps are projections:
\begin{equation}
\label{eq:6}
\xymatrix{
A^S \ar[r]^{\Psi_S} \ar[d] & X_S (A) \ar[d] \\
A^T \ar[r]^{\Psi_T} & X_T (A) \; .
}
\end{equation}

This follows from Proposition \ref{t1}, d). The following fact will be needed in section \ref{sec:3}. 

\begin{lemma} \label{tn13}
Let $S$ be finite and set $\Q_S = \Z [1 / p$ for $p \in S]$. Then for any ring $A$, the inclusion $X_S (A) \subset A^S$ induces an isomorphism $X_S (A) \otimes \Q_S = A^S \otimes \Q_S$.
\end{lemma}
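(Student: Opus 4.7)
The plan is to prove the lemma by induction on the cardinality of the finite truncation set $S$, using flatness of $\Q_S$ over $\Z$ to reduce at once to the surjectivity of $X_S(A) \otimes \Q_S \to A^S \otimes \Q_S$. The base case $S = \{1\}$ is immediate, since $V_1 \langle a \rangle = a$ forces $X_{\{1\}}(A) = A = A^{\{1\}}$.

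For the inductive step with $|S| \ge 2$, I would pick $m \in S$ that is \emph{maximal}, meaning $S$ contains no proper multiple of $m$; such an $m$ exists because $S$ is finite. Setting $T = S \setminus \{m\}$, a short check using maximality shows $T$ is still a truncation set (any divisor $d$ of some $n \in T$ with $d = m$ would make $n$ a proper multiple of $m$ in $S$). The coordinate projection $A^S \twoheadrightarrow A^T$ has kernel canonically identified with $A$ sitting in position $m$, and by Definition/Proposition \ref{tx2}(3) it restricts to a surjection $X_S(A) \twoheadrightarrow X_T(A)$. Writing $K$ for the intersection of $X_S(A)$ with this kernel, I obtain the commutative diagram of short exact sequences
\[
\xymatrix{
0 \ar[r] & K \ar[r] \ar[d] & X_S(A) \ar[r] \ar[d] & X_T(A) \ar[r] \ar[d] & 0 \\
0 \ar[r] & A \ar[r] & A^S \ar[r] & A^T \ar[r] & 0 \; .
}
\]
Tensoring with the flat $\Z$-module $\Q_S$ preserves the exactness of both rows, and by the induction hypothesis the right vertical map becomes an isomorphism, so the snake lemma identifies the cokernel of the middle vertical map with the cokernel of the left vertical one. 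Thus everything reduces to the claim $K \otimes \Q_S = A \otimes \Q_S$.

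The crux is to produce enough elements of $K$. Because $m$ is maximal in $S$, the only $\nu \in S$ with $m \mid \nu$ is $\nu = m$, so
\[
V_m \langle a \rangle = m \bigl( \delta_{m \mid \nu}\, a^{\nu / m} \bigr)_{\nu \in S}
\]
has $ma$ in position $m$ and zero elsewhere; hence $mA \subseteq K$ under the identification $K \subseteq A$. Since $S$ is a truncation set, every prime divisor of $m$ lies in $S$, so $m$ is a unit in $\Q_S$, and therefore $mA \otimes \Q_S = A \otimes \Q_S$, giving the desired equality. I do not expect a serious obstacle: the only non-formal input is this last observation, and the rest is just flatness of $\Q_S$ plus the snake lemma feeding the inductive step.
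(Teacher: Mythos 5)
Your proof is correct, but it takes a genuinely different route from the paper. The paper argues directly: given $\bfx \in A^S$ and $N$ with prime divisors in $S$, it observes that one may take $q_n = n^{-1}N^{-1}$ and then recursively solve the triangular system $\sum_{n\mid m} a_n^{m/n} = x_m$ for the $a_n$, inducting on the number of prime factors $\nu(m)$ (set $a_1 = x_1$ and $a_m = x_m - \sum_{n\|m} a_n^{m/n}$). This yields an explicit preimage $\sum_n V_n\langle a_n\rangle \otimes n^{-1}N^{-1}$ of $\bfx \otimes N^{-1}$, and surjectivity after inverting $S$ follows; no exact-sequence machinery is invoked. You instead induct on $|S|$, peel off a maximal element $m$, and compare the two short exact sequences attached to the truncation set $T = S\setminus\{m\}$; flatness of $\Q_S$, the snake lemma, and Definition/Proposition \ref{tx2}(3) reduce the inductive step to the elementary observation that $V_m\langle a\rangle$ has $ma$ in the $m$-slot and $0$ elsewhere, so that $mA \subset K$. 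Both proofs exploit the same triangular phenomenon — the leading term of $V_m\langle a\rangle$ in position $m$ is $ma$ — but yours packages it homologically while the paper's packages it as an explicit recursion. What the paper's approach buys is an explicit formula for a preimage (which is structurally parallel to the injectivity argument already given for $\Psi_S$ in Proposition \ref{t2}); what yours buys is a cleaner reduction step and a more modular argument, at the cost of invoking the surjectivity of $X_S(A)\to X_T(A)$ and the snake lemma. One small thing worth spelling out: the induction hypothesis gives the isomorphism after tensoring with $\Q_T$, and you then need the base change $-\otimes_{\Q_T}\Q_S$ (licit since $T\subset S$ implies $\Q_T\subset\Q_S$) to conclude for $\Q_S$; this is correct but slightly glossed over.
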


\begin{proof}
For an integer $n$ let $\nu_p (n)$ be its $p$-valuation and let $\nu (n) = \sum_p \nu_p (n)$ be the number of prime factors of $n$ counted with their multiplicities. Since $X_S (A) \otimes \Q_S$ is the $\Q_S$-submodule of $(A \otimes \Q_S)^S = A^S \otimes \Q_S$ generated by the elements $V_n \langle a \rangle \otimes 1$ for $n \in S$ and $a \in A$ it suffices to prove the following assertion: For $\bfx \in A^S$ and any integer $N$ with prime divisors in $S$, there exist $a_n \in A$ and $q_n \in \Q_S$ for $n \in S$ such that we have
\[
\sum_{n \in S} V_n \langle a_n \rangle \otimes q_n = \bfx \otimes N^{-1} \quad \text{in} \; A^S \otimes \Q_S \; .
\]
Taking $q_n = n^{-1} N^{-1}$ it therefore suffices to solve the following equations for the $a_n$'s 
\begin{equation} \label{eq:11.5}
\sum_{n \tei m} a^{m/n}_n = x_m \quad \text{in $A$ for all} \; m \in S \; .
\end{equation}
Set $a_1 = x_1$ and given $i \ge 0$ assume that for $n \in S$ with $\nu (n) \le i$, elements $a_n \in A$ have been determined such that \eqref{eq:11.5} holds for all $m \in S$ with $\nu (m) \le i$. For $m \in S$ with $\nu (m) = i +1$, equation \eqref{eq:11.5} holds with the $a_n$'s already found and with
\[
a_m := x_m - \sum_{n \| m} a^{m/n}_n \; .
\]
\end{proof}

We end the section with a small topological lemma which will be needed later.

\begin{lemma} \label{t23}
Let $I$ be a countable directed poset and consider a projective system of exact sequences of abelian groups with the discrete topologies
\[
0 \longrightarrow N_i \longrightarrow G_i \xrightarrow{\pi_i} H_i \longrightarrow 0 \quad \text{for} \; i \in I \; .
\]
Assume that the transition maps $N_i \to N_j$ are surjective for $i \ge j$. Setting $N = \varprojlim_i N_i$ with the projective limit topology etc. we get an exact sequence
\[
0 \longrightarrow N \longrightarrow G \xrightarrow{\pi} H \longrightarrow 0
\]
where the continuous map $\pi$ is open. In particular $\pi$ induces a topological isomorphism $G / N \silo H$.
\end{lemma}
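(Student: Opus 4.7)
The plan is to reduce the lemma to the classical Mittag--Leffler argument together with a routine verification that the quotient topology matches the product topology on $H$. First I would handle surjectivity of $\pi$. Exactness at $N$ and exactness at $G$ are formal consequences of the left-exactness of the projective limit functor, so the real content is the surjectivity of $\pi$. Because $I$ is countable and directed, I would choose a cofinal chain $j_1 \le j_2 \le \cdots$ in $I$ and, given $h = (h_i) \in H$, inductively construct compatible lifts $g_{j_k} \in G_{j_k}$ of $h_{j_k}$. For the inductive step, pick any lift $g'_{j_{k+1}}$ of $h_{j_{k+1}}$ and measure the discrepancy with the already chosen $g_{j_k}$: it lies in $N_{j_k}$, and by the assumed surjectivity of $N_{j_{k+1}} \to N_{j_k}$ one can correct $g'_{j_{k+1}}$ by an element of $N_{j_{k+1}}$ so that the new lift projects to $g_{j_k}$. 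Extending along the chain and then to all of $I$ via the cofinality yields the desired element of $G$ projecting to $h$.

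Next I would handle openness. The basic open neighborhoods of a point $g_0 \in G = \varprojlim G_i$ (each $G_i$ discrete) are the sets $U_i(g_0) = p_i^{-1}(\{p_i(g_0)\})$ where $p_i : G \to G_i$ is the projection, and similarly $V_i(h_0) = q_i^{-1}(\{q_i(h_0)\})$ for $H$. Clearly $\pi(U_i(g_0)) \subset V_i(\pi(g_0))$. For the reverse inclusion, take $h \in V_i(\pi(g_0))$; by the surjectivity just established, choose $g' \in G$ with $\pi(g')=h$. Then $p_i(g_0)-p_i(g') \in N_i$, so if I can lift this element to some $n \in N$ with $p_i(n) = p_i(g_0)-p_i(g')$, the element $g = g'+n$ lies in $U_i(g_0)$ and still satisfies $\pi(g)=h$. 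Such a lift exists because the transition maps in $(N_i)$ are surjective: by the same cofinal-chain construction as above (applied now in the much easier setting where all the maps are surjective), the projection $N \to N_i$ is surjective.

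The main obstacle, such as it is, is bookkeeping: one must choose the cofinal chain carefully and check that the lifts on the chain can be extended unambiguously to arbitrary indices $i \in I$ by picking any $j_k \ge i$ and projecting. Well-definedness follows because for two choices $j_k, j_\ell \ge i$ we can compare them at a further index above both and use that the $g_{j_m}$ are compatible along the chain. Once surjectivity and openness are in hand, the final statement is purely formal: $\pi$ factors as a continuous bijection $G/N \to H$ which is automatically a homeomorphism because $\pi$ is open and therefore $G/N \to H$ is open as well.
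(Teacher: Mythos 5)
Your proof is correct and follows essentially the same approach as the paper: openness is reduced to lifting, at a single good index, the defect between an arbitrary preimage and the required one, using the surjectivity of $N \to N_{i_0}$ (which is where countability and the Mittag--Leffler hypothesis enter). The only cosmetic difference is that the paper phrases the neighborhood basis in terms of finite subsets $J \subset I$ having a maximal element, while you use single indices $i$ (legitimate, since $I$ directed makes these cofinal among the $U_J$'s); you also spell out the Mittag--Leffler surjectivity argument that the paper leaves implicit.
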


\begin{proof}
It is enough to show that $\pi (U)$ is open for every open subgroup $U = U_J$ in $G$ of the form
\[
U_J = \{ g = (g_i) \in G \mid g_i = 0 \; \text{for}\; i \in J \} \; ,
\]
where $J \subset I$ is a finite subset with a maximal element. Namely, since $I$ is directed, $(U_J)$ is a fundamental system of neighborhoods of $0 \in G$. It suffices to show that
\begin{equation}
\label{eq:10n}
\pi (U_J) = \{ h = (h_i) \in H \mid h_i = 0 \; \text{for} \; i \in J \}
\end{equation}
since the right hand side is open. The inclusion "$\subset$" being trivial, consider $h \in H$ with $h_i = 0$ for $i \in J$ and choose some $\tg \in G$ with $\pi (\tg) = h$. Then $\tg_i \in N_i$ for $i \in J$. Let $i_0 \in J$ be a maximal element. Since $I$ is countable and the transition maps of $(N_i)_{i \in I}$ are surjective, the projection map $N \to N_{i_0}$ is surjective and hence there is an element $n \in N$ with $n_{i_0} = \tg_{i_0}$. It follows that $n_i = \tg_i$ for all $i \in J$. For the element $g = \tg - n \in U_J$ we therefore have $\pi (g) = h$.
\end{proof}
\section{A new approach to Witt vector rings} \label{sec:2}
For a ring $R$ we introduce new rings which are naturally isomorphic to the truncated Witt vector rings of $R$. In our approach the ring operations are obvious but not the natural bijection to a power of $R$. 

Consider the presentation \eqref{eq:i1a}
\[
0 \longrightarrow I \longrightarrow \Z R \xrightarrow{\pi} R \longrightarrow 0
\]
and recall the notations and results of Proposition \ref{tx2}.

\begin{definition} \label{tx3}
For a ring $R$ and any truncation set $S$ we set
\[
E_S (R) = X_S (\Z R) / X_S (I) \; .
\]
These are Hausdorff topological rings which are related by functorial continuous Frobenius, Verschiebung and projection operators.
\end{definition}
All the relations in Proposition \ref{t1} also hold if the rings $A^S$ there are replaced with $E_S (R)$.

The projection $\pi : \Z R \to R$ in \eqref{eq:i1a} is split by the map $r \to [r]$. We define the multiplicative (Teichm\"uller) map $\langle \rangle_S : R \to E_S (R)$ as the composition:
\begin{equation} \label{eq:4a}
\langle \rangle_S : R \xrightarrow{[\,]} \Z R \xrightarrow{\langle \rangle_S} X_S (\Z R) \longrightarrow E_S (R) \; .
\end{equation}
We will soon see that the rings $E_S (R)$ with these extra structures are functorially isomorphic to the truncated Witt vector rings $W_S (R)$. For later purposes it is sometimes necessary to work with other presentations of $R$. We therefore develop the theory in this more general context.

\begin{definition} \label{tx4}
For a presentation
\begin{equation}
\label{eq:4}
0 \longrightarrow I  \longrightarrow A \xrightarrow{\pi} R \longrightarrow 0
\end{equation}
where $A$ is a ring without $S$-torsion, set
\[
E_S (R,\pi) = X_S (A) / X_S (I) \; .
\]
\end{definition}

It is a Hausdorff topological ring equipped with continuous Frobenius, Verschiebung and projection operators satisfying the analogues of the relations in Proposition \ref{t1}. It is true but non-trivial that the multiplicative map
\[
\langle \rangle_S : A \longrightarrow X_S (A) \longrightarrow E_S (R,\pi)
\]
descends to a multiplicative map $\langle \rangle_S : R = A / I \to E_S (R,\pi)$. In fact the rings $E_S (R,\pi)$ and $E_S (R)$ are canonically isomorphic, compatibly with $V_n , F_m$ and $\langle \rangle_S$. We will see this in Corollary \ref{j7}, c) below.

For a presentation \eqref{eq:4} there is an exact sequence of projective systems where $T$ runs over the countable poset of finite truncation subsets of $S$
\[
0 \longrightarrow (X_T (I)) \longrightarrow (X_T (A)) \longrightarrow (E_T (R,\pi)) \longrightarrow 0 \; .
\]
Since $X_{T'} (I) \to X_T (I)$ is surjective for $T \subset T' \subset S$ it follows from Lemma \ref{t23} that as topological rings we have

\begin{equation}
\label{eq:8}
E_S (R,\pi) = \varprojlim_T E_T (R,\pi) \quad \text{and in particular} \quad E_S (R) = \varprojlim_T E_T (R) \; .
\end{equation}

To analyze the additive group of $E_S(R, \pi)$ and the map $ \Psi_S$ we use an argument based on the ideas of Witt in \cite{W2}. Let $A$ be any commutative ring. An easy recursive argument shows that the following map is bijective and in fact a homeomorphism for the $t$-adic topology on the right
\begin{equation} \label{eq:x11}
\lambda : A^{\N} \longrightarrow \hat{U} (A) := 1 + tA \llbracket t \rrbracket \; , \; (a_n) \longmapsto \prod^{\infty}_{n=1} (1 - a_n t^n) \; .
\end{equation}
Here, if $A$ has no unit element, we embed it into a unital ring $\tilde{A}$ and let $1$ be the unit element of $\tilde{A}$. We view $\hU (A)$ and $tA \llbracket t \rrbracket$ as abelian groups under multiplication resp. addition. The map
\[
\mu_A : \hat{U} (A) \longrightarrow tA[[t]] \equiv A^{\N} \; , \; \mu_A (P) = - tP' / P
\]
is a continuous homomorphism. Writing $ \Psi = \Psi_\N$ we find
\begin{align} \label{eq:18nn}
\mu_A \prod^{\infty}_{n=1} (1 - a_n t^n) & = \sum^{\infty}_{n=1} n \frac{a_n t^n}{1 - a_n t^n} = \sum_{n, \nu} n a^{\nu}_n t^{n\nu} \\
& = \sum_m \big( \sum_{n \mid m} n a^{m/n}_n \big) t^m \equiv \Psi ((a_n)) \; . \nonumber
\end{align}

Hence $ \Psi = \mu_A \circ \lambda$ and $ \mu_A$ takes values in Im$ \Psi \subset X(A) := X_\N(A)$.

In $\hat{U}(A)$ consider the closed
subgroup $H_S$ generated by the elements $1 - at^n$ for $a \in A, n \in \N \setminus S$ and the quotient $\hat{U}_S (A):= \hat{U}(A)/H_S$. Viewing $A^S$ as a quotient of $A^\N$, the map $\lambda$ induces a homeomorphism $\lambda_S : A^S \to \hat{U}_S (A)$. \\
Formula \eqref{eq:18nn} implies that $\mu_A(1-at^n)= V_n \langle a \rangle$ for $n \in \N$ and $a \in A$. For $n \in \N \setminus S$ the element $V_n \langle a \rangle$ is mapped to zero under the projection $X(A) \to X_S(A)$ by Proposition \ref{t1} d). Therefore $ \mu_A$ descends to a continuous homomorphism $$\mu_{A,S}:\hat{U}_S(A) \to X_S(A).$$
The equation $ \Psi = \mu_A \circ \lambda$ implies that $ \Psi_S = \mu_{A,S} \circ \lambda_S$.

Let $I$ be an ideal in the commutative ring $A$. Then the kernel of the natural surjection $\hat{U}_S(A) \to \hat{U}_S(A/I)$ is exactly $\hat{U}_S(I)$. Therefore $A \mapsto \hat{U}_S(A)$ is an exact functor to abelian groups.
\begin{proposition}
\label{t2}
a) For any ring $A$, the image of $\Psi_S$ is a subgroup of $X_S (A)$.\\
b) For a ring $A$ without $S$-torsion the map $\Psi_S : A^S \to X_S (A)$ defined in \eqref{eq:6a} is a homeomorphism and the map $\mu_{A,S}: \hat{U}_S(A) \to X_S(A)$ is an isomorphism of topological groups.
\end{proposition}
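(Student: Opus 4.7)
The plan is to use the factorization $\Psi_S = \mu_{A,S} \circ \lambda_S$ established just before the proposition, together with the projective limit decomposition of $X_S(A)$ from Definition/Proposition \ref{tx2}(3), to reduce the analysis to the case of a finite truncation set. For part (a), the key input is that $\mu_A(P) = -tP'/P$ is a group homomorphism from $(\hat{U}(A),\cdot)$ to $(A^{\N},+)$, because the logarithmic derivative converts products to sums: $\mu_A(PQ) = -t(PQ)'/(PQ) = -tP'/P - tQ'/Q = \mu_A(P) + \mu_A(Q)$. This property passes to the quotient $\hat{U}_S(A)$, and since $\lambda_S$ is a bijection, $\mathrm{Im}(\Psi_S) = \mathrm{Im}(\mu_{A,S})$ is automatically a subgroup of $X_S(A)$.

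For part (b) I first reduce to finite $S$. Both $A^S = \varprojlim_T A^T$ (just the definition of the product topology) and $X_S(A) = \varprojlim_T X_T(A)$ (by Definition/Proposition \ref{tx2}(3)) are projective limits over finite truncation subsets $T \subset S$, and the commutative diagram \eqref{eq:6} identifies $\Psi_S$ with the inverse limit of the maps $\Psi_T$. It therefore suffices to prove that each $\Psi_T$ is a bijective homeomorphism for finite $T$. For such $T$ the spaces $A^T$ and $X_T(A)$ carry the discrete topology (as $A$ is discrete), so any continuous bijection is a homeomorphism and only bijectivity remains.

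Bijectivity of $\Psi_T$ for finite $T$ splits into two arguments. For surjectivity, by part (a) the image is a subgroup of $X_T(A)$; it contains $V_n\langle a\rangle$ for every $n \in T$ and $a \in A$, since $\Psi_T$ applied to the vector $\bfa \in A^T$ with $a_n = a$ and all other entries $0$ equals $V_n\langle a\rangle$; and because $A^T$ is discrete, the ``closed subgroup'' $X_T(A)$ generated by these elements coincides with the algebraic subgroup they span. Hence $\mathrm{Im}(\Psi_T) = X_T(A)$. For injectivity, suppose $\Psi_T(\bfa) = 0$. Using the formula $\Psi_T(\bfa)_m = \sum_{n \mid m} n a_n^{m/n}$, I induct on the number $\nu(m)$ of prime factors of $m$: the case $m = 1$ gives $a_1 = 0$, and in general the inductive hypothesis kills the terms with $n \| m$, leaving $m a_m = 0$, whence $a_m = 0$ because $A$ has no $S$-torsion.

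Finally, since $\lambda_S : A^S \to \hat{U}_S(A)$ is already known to be a homeomorphism, the identity $\mu_{A,S} = \Psi_S \circ \lambda_S^{-1}$ promotes $\mu_{A,S}$ to an isomorphism of topological groups as soon as $\Psi_S$ is one. The most delicate point in this plan is the surjectivity step for finite $T$: without part (a), one would only conclude that $\mathrm{Im}(\Psi_T)$ is dense in $X_T(A)$, and upgrading density to equality would require a separate closedness argument. Using part (a) together with discreteness for finite $T$ sidesteps this obstacle cleanly, and the infinite case then follows formally from the projective limit presentation.
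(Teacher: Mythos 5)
Your proof is correct and follows essentially the same route as the paper: part (a) via the factorization $\Psi_S = \mu_{A,S}\circ\lambda_S$ with $\lambda_S$ bijective and $\mu_{A,S}$ a homomorphism, then part (b) by reducing to finite $T$ through the projective limit of Definition/Proposition~\ref{tx2}(3) and diagram~\eqref{eq:6}, where discreteness makes homeomorphism automatic, surjectivity uses part (a) plus minimality of $X_T(A)$, and injectivity uses the induction on $\nu(m)$ and absence of $S$-torsion. The only addition you make is to spell out explicitly that $\mu_{A,S}=\Psi_S\circ\lambda_S^{-1}$ is then a topological group isomorphism, which the paper leaves implicit; that is a harmless and correct completion.
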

\begin{proof}
a) Because of the relation $\Psi_S = \mu_{A,S} \verk \lambda_S$ where $ \lambda_S$ is bijective, the maps $ \Psi_S$ and $\mu_{A,S}$ have the same image in $X_S(A)$. Since $\mu_{A,S}$ is a homomorphism, this image is a subgroup.\\
b) First we prove that $\Psi_S$ is bijective for finite $S$. By a) the image of $\Psi_S$ is a subgroup of $X_S (A)$ which contains all the elements $V_n \langle a\rangle$ for $n \in S$ and $a \in A$. Since $X_S (A)$ is the smallest subgroup of $A^S$ with this property (because $S$ is finite), it follows that $\Psi_S$ is surjective. For an integer $n$ let $\nu(n)$ be the number of prime factors of $n$ counted with their multiplicities. Assume that in formula \eqref{eq:5} we have $\Psi_S (\bfa) = 0$ for some $\bfa \in A^S$. For $m = 1$ we get $a_1 = 0$. Assume that $a_n = 0$ for all $n$ with $\nu (n) \le i$ has been shown for some $i \ge 0$. Let $\nu (m) = i+1$. Then $0 = \Psi_S (\bfa)_m = m a_m$ by the induction hypotheses. Since $A$ has no $S$-torsion it follows that $a_m = 0$. Thus $\bfa = 0$ and hence $\Psi_S$ is injective. \\
For finite truncation sets $T$ the bijections $\Psi_T : A^T \to X_T (A)$ are trivially even homeomorphisms. Now let $S$ be arbitrary. Using Proposition \ref{tx2}, 3) and diagram \eqref{eq:6} we find that
\[
\Psi_S = \varprojlim_{T \subset S} \Psi_T : A^S = \varprojlim_{T \subset S} A^T \longrightarrow X_S (A) = \varprojlim_{T \subset S} X_T (A) \; .
\]
Hence $\Psi_S$ is a homeomorphism. \end{proof}

\begin{theorem}\label{jc}
Let $R$ be a ring and $0 \to I \to A \xrightarrow{\pi} R \to 0$ a presentation where $A$ has no $S$-torsion. Then there are a unique homeomorphism $\psi_S : R^S \to E_S(R, \pi)$ and a unique isomorphism $\nu_{R,S}: \hat{U}_S(R) \to E_S(R, \pi)$ of topological groups such that the following diagrams commute
\begin{equation}
\label{eq:9}
\xymatrix{
A^S \ar[r]^-{\Psi_S} \ar[d]_{\pi^S} & X_S (A) \ar[d] && \hat{U}_S(A) \ar[r]^-{\mu_{A,S}} \ar[d] & X_S (A) \ar[d] \\
R^S \ar[r]^-{\psi_S}& E_S (R,\pi)  &&\hat{U}_S(R) \ar[r]^-{\nu_{R,S}}& E_S (R,\pi)\;.
}
\end{equation}
\end{theorem}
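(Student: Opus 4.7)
The plan is to build $\nu_{R,S}$ first as the descent of the group homomorphism $\mu_{A,S}$ through the short exact sequence $0 \to \hat{U}_S(I) \to \hat{U}_S(A) \to \hat{U}_S(R) \to 0$, and then to define $\psi_S := \nu_{R,S} \circ \lambda_{R,S}$, where $\lambda_{R,S}$ denotes the analogue of \eqref{eq:x11} applied to the ring $R$. Since the map $\Psi_S$ of \eqref{eq:6a} is \emph{not} additive, one cannot hope to descend it directly to the quotient $E_S(R,\pi)$; rerouting through the multiplicative group $\hat{U}_S$ is what makes the construction possible.

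First I would apply Proposition \ref{t2} b) to the $S$-torsion free ring $A$, which yields that $\mu_{A,S}$ is a topological group isomorphism. Since $I$ is a subring of $A$, it is also $S$-torsion free, and the same proposition (which applies to commutative, possibly non-unital, rings) gives that $\mu_{I,S} : \hat{U}_S(I) \to X_S(I)$ is a topological isomorphism as well. The naturality of $\mu$ identifies the restriction of $\mu_{A,S}$ to $\hat{U}_S(I)$ with $\mu_{I,S}$; in particular $\mu_{A,S}(\hat{U}_S(I)) = X_S(I)$. Passing to quotients, which by exactness of the functor $\hat{U}_S$ (observed just before the proposition) and by definition of $E_S(R,\pi)$ are respectively $\hat{U}_S(R)$ and $E_S(R,\pi)$, produces an induced bijective continuous homomorphism $\nu_{R,S} : \hat{U}_S(R) \to E_S(R,\pi)$. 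Openness holds because $\mu_{A,S}$ is a homeomorphism and quotient maps of topological groups are open, so $\nu_{R,S}$ is a topological isomorphism.

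Next I would set $\psi_S := \nu_{R,S} \circ \lambda_{R,S}$, which is a homeomorphism as a composition of two. The commutativity of the left diagram in \eqref{eq:9} follows by pasting the naturality square of $\lambda$ for the ring homomorphism $\pi : A \to R$ (giving $\lambda_{R,S} \circ \pi^S = q \circ \lambda_S$, with $q: \hat{U}_S(A) \to \hat{U}_S(R)$ the quotient) onto the defining square of $\nu_{R,S}$, using the factorization $\Psi_S = \mu_{A,S} \circ \lambda_S$.

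Uniqueness of both maps is forced by the surjectivity of the left vertical arrows in \eqref{eq:9}: $\pi^S$ is surjective because $\pi$ is, and $\hat{U}_S(A) \to \hat{U}_S(R)$ is surjective by exactness of $\hat{U}_S$. The main step requiring genuine work is the equality $\mu_{A,S}(\hat{U}_S(I)) = X_S(I)$: the inclusion $\subseteq$ is immediate from $\mu_A(1 - bt^n) = V_n\langle b\rangle \in X_S(I)$ for $b \in I$, but the reverse inclusion relies on Proposition \ref{t2} b) applied to the non-unital ring $I$—this is what makes $\nu_{R,S}$ bijective rather than merely surjective, and is the substantive ingredient in the whole argument.
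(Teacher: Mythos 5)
Your proof is correct and takes essentially the same approach as the paper's: define $\nu_{R,S}$ by descending the topological isomorphisms $\mu_{A,S}$ and $\mu_{I,S}$ of Proposition \ref{t2} b) through the short exact sequence $0 \to \hat{U}_S(I) \to \hat{U}_S(A) \to \hat{U}_S(R) \to 0$, then set $\psi_S = \nu_{R,S} \circ \lambda_S$. You have simply made explicit the details the paper compresses into two sentences (exactness of $\hat{U}_S$, the restriction identity $\mu_{A,S}|_{\hat{U}_S(I)} = \mu_{I,S}$, openness of $\nu_{R,S}$, and the surjectivity argument for uniqueness), and your observation that Proposition \ref{t2} b) applied to the non-unital ideal $I$ is the substantive input is exactly right.
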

\begin{proof}
The uniqueness is clear. The topological isomorphisms $\mu_{A,S}$ and $\mu_{I,S}$ of Proposition \ref{t2} induce the topological isomorphism
$$\nu_{R,S}: \hat{U}_S(R) = \hat{U}_S(A)/\hat{U}_S(I) \longrightarrow X_S(A)/X_S(I) = E_S(R, \pi) \; .$$
Composing with the homeomorphism $\lambda_S: R^S \to \hat{U}_S(R)$ we obtain $ \psi_S$ as $\nu_{R,S}\circ \lambda_S$.\end{proof}

We can now introduce the Teichm\"{u}ller character $\langle \rangle_S : R \to E_S(R, \pi)$ by defining $ \langle r \rangle _S := \psi_S((r,0, \ldots ))$ for $r \in R$. For $a \in A$ we have
\begin{equation} \label{j5} \langle \pi (a) \rangle_S = \psi_S ((\pi (a), 0, \ldots)) = \langle a \rangle_S \mod X_S(I)\end{equation}
Hence the resulting map $\langle \, \rangle_S : R \to E_S(R, \pi)$ is multiplicative. Using this Teichm\"{u}ller character it follows from the first diagram in Theorem \ref{jc} that the map $ \psi_S :R^S \to E_S(R, \pi)$ has the following description
\begin{equation} \label{j6} \psi_S(r) = \sum_{n \in S}V_n\langle r\rangle 
\end{equation}
Here, as before, we set $V_n \langle r\rangle = V_n(\langle r\rangle_{S/n})$.

The association $R \mapsto E_S(R)$ from Definition \ref{tx3} is functorial by construction. We now discuss functoriality for the rings $E_S(R, \pi)$ and obtain in particular a functorial identification $E_S(R, \pi) \cong E_S(R)$. Consider two $S$-torsion free presentations $0 \to I_\nu \to A_\nu \xrightarrow{\pi_\nu} R_\nu \to 0$ for $ \nu = 1,2$. A ring homomorphism $\alpha : R_1 \to R_2$ induces a continuous map $E_S( \alpha): E_S(R_1,\pi_1) \to E_S(R_2, \pi_2)$ by the formula $E_S( \alpha) \circ \psi_S = \psi_S \circ \alpha^S$. It is clear that $E_S( \alpha_2 \circ \alpha_1) = E_S( \alpha_2) \circ E_S( \alpha_1)$ for maps $R_1 \xrightarrow{\alpha_1}R_2 \xrightarrow{\alpha_2}R_3$ and torsion-free presentations of the $R_i$.

\begin{cor}\label{j7} a) The map $E_S( \alpha)$ is a continuous homomorphism of rings which commutes with $ \langle \, \rangle_S$. Moreover $E_S( \alpha)$ and $E_{S/n}( \alpha)$ commute with $F_n$ and $V_n$ in the evident sense for $n \in S$.\\
b) The map $E_S( \alpha)$ can be characterized as the unique continuous additive map $E_S(R_1, \pi_1) \to E_S(R_2, \pi_2)$ which sends $V_n \langle r \rangle$ to $V_n \langle \alpha(r) \rangle$ for $r \in R_1$, $n \in S$.\\
c) For any presentation $0 \to I \to A \xrightarrow{\pi} R \to 0$ the map $E_S( \id)$ gives an identification $E(R, \pi) \cong E_S(R)$. \end{cor}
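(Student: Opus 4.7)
The plan is to transport every structure on $E_S(R,\pi)$ through the homeomorphism $\psi_S$ of Theorem \ref{jc} and verify the required compatibilities one at a time, exploiting that the defining relation $E_S(\alpha)\circ\psi_S = \psi_S\circ\alpha^S$ automatically encodes whatever is natural in $R$.

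For part (a), I first compute $E_S(\alpha)$ on generators. The expansion $\psi_S(\bfs)=\sum_{n\in S}V_n\langle s_n\rangle$ from \eqref{j6} shows that $V_n\langle r\rangle$ is the image under $\psi_S$ of the sequence with $r$ in position $n$ and zeros elsewhere; the defining square therefore yields $E_S(\alpha)(V_n\langle r\rangle)=V_n\langle\alpha(r)\rangle$, and the case $n=1$ gives compatibility with the Teichm\"uller character. Additivity is then free: the group law on $E_S(R,\pi)$ pulls back along $\psi_S=\nu_{R,S}\circ\lambda_S$ to the operation $\bfr\oplus\bs = \lambda_S^{-1}(\lambda_S(\bfr)\cdot\lambda_S(\bs))$ on $R^S$, which is defined purely in terms of the ring structure of $R$ and is therefore preserved by $\alpha^S$. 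Commutation with $V_n$ and $F_m$ is verified on the dense set of generators $V_k\langle r\rangle$ via the single-generator formulas \eqref{eq:2} and \eqref{eq:3}: both sides reduce to an expression of the form $V_\ell\langle\alpha(r)^{m'}\rangle$ or $V_{nk}\langle\alpha(r)\rangle$, manifestly natural in $R$, and continuity plus additivity extend the identity to all of $E_S(R,\pi)$.

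Multiplicativity is the main obstacle, since the product does not visibly come from an intrinsically defined operation on $R^S$. The trick is bi-density: the continuous biadditive defect $B(x,y)=E_S(\alpha)(xy)-E_S(\alpha)(x)E_S(\alpha)(y)$ vanishes identically once it vanishes on pairs of generators $(V_n\langle r\rangle, V_m\langle s\rangle)$, because by Proposition \ref{tx2} together with the identity $\langle\pi(a)\rangle_S=\langle a\rangle_S \bmod X_S(I)$ of \eqref{j5}, the closed subgroup generated by the $V_n\langle r\rangle$ for $r\in R_1$, $n\in S$ is all of $E_S(R_1,\pi_1)$. On such a pair, the formula \eqref{eq:1} in $X_S(A)$, pushed down via any lift $a\in\pi^{-1}(r)$, $b\in\pi^{-1}(s)$, reads
\[
V_n\langle r\rangle \, V_m\langle s\rangle \;=\; (n,m)\,V_{[n,m]}\langle r^{m'}s^{n'}\rangle \quad\text{in } E_S(R,\pi);
\]
applying $E_S(\alpha)$ and using $E_S(\alpha)(V_k\langle t\rangle)=V_k\langle\alpha(t)\rangle$ together with $\alpha(r^{m'}s^{n'})=\alpha(r)^{m'}\alpha(s)^{n'}$ produces precisely $V_n\langle\alpha(r)\rangle\,V_m\langle\alpha(s)\rangle$ after another application of \eqref{eq:1} in $E_S(R_2,\pi_2)$.

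Parts (b) and (c) are then short. For (b), any continuous additive $\varphi$ sending $V_n\langle r\rangle$ to $V_n\langle\alpha(r)\rangle$ satisfies $\varphi(\psi_S(\bfs))=\sum_{n\in S} V_n\langle\alpha(s_n)\rangle=\psi_S(\alpha^S(\bfs))$ by continuity applied to the unconditionally convergent sum, hence $\varphi=E_S(\alpha)$ by the uniqueness clause of Theorem \ref{jc}. For (c), the defining relation for $E_S(\id):E_S(R,\pi)\to E_S(R)$ reduces to $E_S(\id)\circ\psi_S^{(\pi)}=\psi_S^{(\mathrm{can})}$, so $E_S(\id)=\psi_S^{(\mathrm{can})}\circ(\psi_S^{(\pi)})^{-1}$ is a homeomorphism, and by (a) it is an isomorphism of topological rings respecting $\langle\,\rangle_S$, $V_n$ and $F_m$.
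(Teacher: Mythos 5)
Your proof is correct, and it takes a genuinely different route from the paper's. The paper handles part (a) by reduction to the case where $\alpha$ lifts to a morphism of presentations $\tilde\alpha:(\pi_1)\to(\pi_2)$: in that case $E_S(\tilde\alpha)$ is manifestly a continuous ring homomorphism compatible with $V_n,F_n,\langle\,\rangle_S$, and formula \eqref{j6} forces $E_S(\tilde\alpha)=E_S(\alpha)$; for general $\alpha$ the paper interposes a free (polynomial) presentation of $R_1$, lifts $\id_{R_1}$ and $\alpha$ from it to $(\pi_1)$ and $(\pi_2)$ respectively, and observes that $E_S(\alpha)$ is the composite of one such lifted isomorphism inverse with another lifted homomorphism. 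You instead work entirely at the level of generators: you pin down $E_S(\alpha)$ on $V_n\langle r\rangle$, derive additivity by transporting the group law through $\lambda_S$ (where it becomes power-series multiplication, visibly natural in $R$), and then deduce multiplicativity and the $V_n,F_m$ compatibilities by a density-and-biadditivity argument using the closed relations \eqref{eq:1}--\eqref{eq:3}, which you correctly observe descend from $X_S(A)$ to $E_S(R,\pi)$ via \eqref{j5}. What the paper's route buys is that ring-homomorphism-ness is immediate once a lift exists, at the cost of having to factor through a free presentation to produce a lift; what your route buys is that you never need to lift $\alpha$ at all, at the cost of having to check that the biadditive, continuous defect $B(x,y)$ vanishes on a bi-dense family of pairs. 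Both are complete; parts (b) and (c) in your version coincide with the paper's argument.

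One small point worth making explicit (your argument implicitly handles it): when you claim $\alpha^S$ preserves the transported group law $\bfr\oplus\bs=\lambda_S^{-1}(\lambda_S(\bfr)\cdot\lambda_S(\bs))$, what you are using is the intertwining $\lambda_S\circ\alpha^S=\alpha_*\circ\lambda_S$ (with $\alpha_*$ the coefficientwise map on $\hat U_S$), together with the fact that $\alpha_*$ is a homomorphism for power-series multiplication; the inverse direction $\lambda_S^{-1}\circ\alpha_*=\alpha^S\circ\lambda_S^{-1}$ follows since $\lambda_S$ is bijective. Stating this would make the additivity step airtight. Likewise, when you write the $(n,m)$-fold scalar in \eqref{eq:1} after applying $E_S(\alpha)$, you are invoking the additivity just established --- the order of the argument (first additivity, then generator-level multiplicativity, then bi-density) is essential and you have it right.
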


\begin{proof} a) We first assume that $ \alpha$ lifts to a map of presentations, i.e.~to a commutative diagram
$$\xymatrix{
0 \ar[r] & I_1 \ar[r] \ar[d]^{\tilde{\alpha}} & A_1 \ar[r]^{\pi_1} \ar[d]^{\tilde{\alpha}} & R_1 \ar[r] \ar[d]^{\alpha} & 0 \\
0 \ar[r] & I_2 \ar[r] & A_2 \ar[r]^{\pi_2} & R_2 \ar[r] & 0 \; .
}$$

Then $ \tilde{\alpha}$ induces a homomorphism of rings
$$E_S( \talpha) : E_S(R_1, \pi_1) = X_S(A_1)/X_S(I_1) \to X_S(A_2)/X_S(I_2) = E_S(R_2, \pi_2)$$ which is compatible with $V_n$ and $F_n$ and by (\ref{j5}) also with the Teichm\"{u}ller character. Using formula (\ref{j6}) it follows that $E_S( \talpha) \circ \psi_S = \psi_S \circ \alpha^S$ and therefore $E_S( \alpha) = E_S( \talpha)$. Hence $E_S( \alpha)$ has the desired properties.\\
In the general case choose a presentation $0 \to I \to A \xrightarrow{\pi} R_1 \to 0$ where $A$ is a polynomial algebra over $\Z$. The identity id$_{R_1}$ on $R_1$ lifts to a map from this free presentation to the presentation given by $\pi_1$. Therefore the bijection $E_S( \id_{R_1}): E_S(R_1, \pi) \to E_S(R_1, \pi_1)$ is a ring isomorphism. Similarly $\alpha :R_1 \to R_2$ lifts to a morphism from the free presentation by $\pi$ to the presentation by $\pi_2$ and therefore $E_S(\alpha): E_S(R_1, \pi) \to E_S(R_2, \pi_2)$ is a homomorphism of rings. The composition $E_S(\alpha) \circ E_S(\id_{R_1})^{-1}$ of these maps being the map $E_S(\alpha): E_S(R_1, \pi_1) \to E_S(R_2, \pi_2)$ it follows that the latter is a ring homomorphism as well. Compatibility with $ \langle \, \rangle$, $F_n$, $V_n$ follows in the same way.\\
b) The map $E_S( \alpha)$ has the stated properties, e.g. by a). The uniqueness assertion follows from the explicit formula \eqref{j6}.\\
c) is a consequence of a).
\end{proof}

In the following, given $S$-torsion free presentations of a ring $R$ by $\pi_1$ and $\pi_2$, we will view the topological ring isomorphism $E_S( \id_R): E_S(R, \pi_1) \to E_S (R, \pi_2)$ as an identification. Then $ \langle \, \rangle$, $F_n$, $V_n$ and the parametrization $\psi_S$ on both sides are identified as well.

In Theorem \ref{jc} we have seen that $E(R) \cong \hat{U}(R)$ as an abelian group. We can use this isomorphism to transport $F_m$ and $V_m$ to continuous group endomorphisms of $\hat{U} (R)$. For an explicit description, in view of the homeomorphism \eqref{eq:x11}, it suffices to describe $F_m$ and $V_m$ on the elements $1 - rt^n$ for $r \in R , n \ge 1$. By the next proposition one obtains the same formulas for $F_n$ and $V_n$ on $\hU(R)$ as the ones defined by Witt in \cite{W2}.

\begin{proposition}
\label{t3n}
Setting $n' = n / (n,m)$ and $m' = m/ (n,m)$ for $n,m \ge 1$ we have
\begin{equation} \label{eq:21a}
F_m (1 - rt^n) = (1 - r^{m'} t^{n'})^{(n,m)} \quad \text{and} \quad V_m (1 - rt^n) = 1 - rt^{nm} \; .
\end{equation}
\end{proposition}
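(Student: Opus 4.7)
The plan is to transport both identities across the group isomorphism $\nu_{R,\N}\colon \hat{U}(R) \silo E(R)$ provided by Theorem \ref{jc} and reduce them to identities in $E(R)$ that follow directly from Proposition \ref{t1}. The key input, recorded in the text just before the proposition (and a special case of \eqref{eq:18nn}), is that
\[
\nu_{R,\N}(1 - rt^n) = V_n\langle r\rangle \quad \text{in } E(R).
\]
Because $\nu_{R,\N}$ is an isomorphism from the multiplicative group $\hat{U}(R)$ to the additive group $E(R)$, any equality in $\hat{U}(R)$ is equivalent to the corresponding equality in $E(R)$; in particular, multiplication by an integer $k$ on the additive side corresponds to raising to the $k$-th power on the multiplicative side.

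For the Verschiebung formula, Proposition \ref{t1}(a), transferred to $E(R)$ via the remark following Definition \ref{tx3}, gives $V_m V_n = V_{nm}$; hence
\[
\nu_{R,\N}\bigl(V_m(1 - rt^n)\bigr) = V_m V_n\langle r\rangle = V_{nm}\langle r\rangle = \nu_{R,\N}(1 - rt^{nm}),
\]
and injectivity of $\nu_{R,\N}$ yields $V_m(1-rt^n) = 1 - rt^{nm}$.

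For the Frobenius formula, Proposition \ref{t1}(b) yields $F_m V_n = (n,m)\,V_{n'} F_{m'}$. Since $F_{m'}$ acts on Teichmüller sequences componentwise as $F_{m'}(\langle r\rangle_{\N/n}) = \langle r^{m'}\rangle_{\N/(nm')}$, we obtain
\[
\nu_{R,\N}\bigl(F_m(1 - rt^n)\bigr) = (n,m)\,V_{n'}\langle r^{m'}\rangle = (n,m)\,\nu_{R,\N}(1 - r^{m'}t^{n'}) = \nu_{R,\N}\bigl((1 - r^{m'}t^{n'})^{(n,m)}\bigr),
\]
and injectivity gives $F_m(1-rt^n) = (1 - r^{m'}t^{n'})^{(n,m)}$.

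There is no real obstacle here: once one has the dictionary $\nu_{R,\N}(1-rt^n) = V_n\langle r\rangle$ and the translation rule that integer multiples in the additive world become integer powers in the multiplicative world, both identities reduce to two lines of computation using Proposition \ref{t1}. The only bookkeeping to watch is the appearance of the factor $(n,m)$ from $F_m V_n = (n,m)V_{n'}F_{m'}$ and the fact that $F_{m'}$ acts on the Teichmüller element $\langle r\rangle$ by $r \mapsto r^{m'}$.
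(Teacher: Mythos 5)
Your proof is correct and follows essentially the same route as the paper: both transport $F_m$ and $V_m$ across $\nu_R\colon \hU(R)\silo E(R)$ using the dictionary $\nu_R(1-rt^n)=V_n\langle r\rangle$ (coming from \eqref{eq:18nn}) and then invoke the relations of Proposition \ref{t1}. The only stylistic difference is that the paper checks the $n=1$ case explicitly and remarks that the general case follows from Proposition \ref{t1}, while you carry out that reduction in full.
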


\begin{proof}
Consider the isomorphism $\nu_R :\hU (R) \to E(R)$ from Theorem \ref{jc}  for the presentation \eqref{eq:i1a} and the corresponding commutative diagram
\[
\xymatrix{
\hU (\Z R) \ar[r]^{\mu_{\Z R}} \ar[d] & X (\Z R) \ar[d] \\
\hU (R) \ar[r]^{\nu_R} & E (R) \; .
}
\]
Using the explicit formula \eqref{eq:18nn} we find
\[
\mu_{\Z R} (1 - a^m t) = F_m \mu_{\Z R} (1 - at) \quad \text{and} \quad \mu_{\Z R} (1 - at^m) = V_m \mu_{\Z R} (1 - at)
\]
for $a \in \Z R$. Hence we obtain formula \eqref{eq:21a} for $n = 1$. The general case follows using Proposition \ref{t1}.
\end{proof}

We draw a couple of useful consequences which are well known in the usual Witt vector theory.
\begin{cor}
\label{t27n}
For a truncation set $S$ and $n \in S$ the set $T = \{ m \in S \mid n \nmid m \}$ is a truncation set. For every ring $R$ the following sequence of additive maps is exact
\[
0 \longrightarrow E_{S/n} (R) \xrightarrow{V_n} E_S (R)\xrightarrow{\proj} E_T (R) \longrightarrow 0 \; .
\]
\end{cor}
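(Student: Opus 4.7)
The first assertion, that $T$ is a truncation set, is immediate: if $m \in T$ and $d \mid m$, then $d \in S$ because $S$ is a truncation set, and from $n \mid d \mid m$ one would obtain the forbidden $n \mid m$; hence $n \nmid d$ and so $d \in T$.

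For the exactness, the plan is to transport the entire sequence along the parametrizations $\psi_S$, $\psi_{S/n}$, $\psi_T$ furnished by Theorem \ref{jc} applied to the $S$-torsion free presentation $0 \to I \to \Z R \to R \to 0$ (which is also $T$- and $(S/n)$-torsion free, these being subsets of $S$). From the explicit formula \eqref{j6} together with the identity $V_n V_\nu = V_{n\nu}$ of Proposition \ref{t1}, one computes that the Verschiebung $V_n : E_{S/n}(R) \to E_S(R)$ corresponds under $\psi$ to the map $\tilde V_n : R^{S/n} \to R^S$ sending $(r_\nu)_{\nu \in S/n}$ to the sequence $(s_m)_{m \in S}$ with $s_{n\nu} = r_\nu$ and $s_m = 0$ for $n \nmid m$. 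Similarly, the commutative square \eqref{eq:6}, passed through the surjections $X_S(\Z R) \to E_S(R)$ and $X_T(\Z R) \to E_T(R)$, identifies the projection $E_S(R) \to E_T(R)$ with the tautological projection $R^S \to R^T$.

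Consequently the sequence in question identifies with
\[
0 \longrightarrow R^{S/n} \xrightarrow{\tilde V_n} R^S \xrightarrow{\proj} R^T \longrightarrow 0,
\]
whose exactness is evident: $\tilde V_n$ is injective because $r_\nu$ can be read off as $s_{n\nu}$; its image consists of those $(s_m)_{m \in S}$ supported on $S \setminus T = \{ m \in S : n \mid m\}$, which is precisely the kernel of the projection to $R^T$; and the projection is surjective via extension by zero. There is really no obstacle in this argument---the entire content has been absorbed into Theorem \ref{jc}, which replaces the opaque construction $X_S(\Z R)/X_S(I)$ by the transparent direct product $R^S$ with its obvious coordinatewise structure.
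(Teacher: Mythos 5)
Your proof is correct. The paper states Corollary \ref{t27n} without an explicit proof, but your method of transporting the sequence along the bijections $\psi_{S/n}$, $\psi_S$, $\psi_T$ of Theorem \ref{jc} and reducing to the evident sequence $R^{S/n}\to R^S\to R^T$ is exactly the technique the authors use for the immediately following Corollary \ref{t28nn}, so this is clearly the intended route. One small point worth making explicit: since the $\psi$'s are merely bijections (not homomorphisms for componentwise addition on $R^S$), the reduction really shows that the preimage of $0$ under $\proj$ equals the image of $V_n$ as \emph{sets}; because the maps $V_n$ and $\proj$ on the $E$-side are group homomorphisms and $\psi_T(0)=0$, this set-theoretic statement is equivalent to exactness of the sequence of groups. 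Your computation of $\tilde V_n$ (extension by zero, with no factor of $n$) and of the projection (tautological) are both correct and follow from formula \eqref{j6}, the identity $V_nV_\nu=V_{n\nu}$, and Proposition \ref{t1}(d).
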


\begin{cor} \label{t28nn}
For an exact sequence of possibly non-unital rings
\[
\ldots \longrightarrow R_{i-1} \xrightarrow{\alpha_{i-1}} R_i \xrightarrow{\alpha_i} R_{i+1} \longrightarrow \ldots
\]
the sequence
\begin{equation}
\label{eq:26n}
\ldots \longrightarrow E_S (R_{i-1}) \longrightarrow E_S (R_i) \longrightarrow E_S (R_{i+1}) \longrightarrow \ldots
\end{equation}
is exact.
\end{cor}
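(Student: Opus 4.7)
The idea is to identify the functor $R \mapsto E_S(R)$ with the functor $R \mapsto \hat{U}_S(R)$ at the level of topological abelian groups, and then invoke the exactness of the latter noted just before Proposition~\ref{t2}.

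For every ring $R$, the canonical presentation $0 \to I \to \Z R \to R \to 0$ is $S$-torsion free since $\Z R$ is a free abelian group, so Theorem~\ref{jc} supplies a topological group isomorphism $\nu_{R,S} : \hat{U}_S(R) \silo E_S(R)$. I would then verify that this is natural in $R$: a homomorphism $\alpha : R_1 \to R_2$ lifts to $\tilde{\alpha} : \Z R_1 \to \Z R_2$ via $[r] \mapsto [\alpha(r)]$, and this lift carries $I_1$ into $I_2$. Since the formula $\mu_{A,S}(P) = -tP'/P$ is visibly natural in $A$, the square with vertical arrows $\hat{U}_S(\tilde\alpha)$ and $X_S(\tilde\alpha)$ commutes; quotienting on the left by $\hat{U}_S(I_i)$ and on the right by $X_S(I_i)$ makes the square descend to one featuring $\nu_{R_1,S}$, $\nu_{R_2,S}$, and a map on the $E_S$ side. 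By Corollary~\ref{j7}(a), the latter map is $E_S(\alpha)$, so $\nu$ assembles into a natural isomorphism of functors to topological abelian groups.

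With this functorial identification in hand, the corollary reduces to the analogous exactness statement for $\hat{U}_S$. A long exact sequence of (possibly non-unital) commutative rings decomposes into short exact sequences $0 \to \Ker \alpha_i \to R_i \to \Imm \alpha_i \to 0$ together with the equalities $\Imm \alpha_{i-1} = \Ker \alpha_i$. Applying the exact functor $\hat{U}_S$ to each short exact sequence and reassembling, the image of $\hat{U}_S(\alpha_{i-1})$ equals $\hat{U}_S(\Imm \alpha_{i-1}) = \hat{U}_S(\Ker \alpha_i)$, which coincides with the kernel of $\hat{U}_S(\alpha_i)$. The only point that deserves a comment is that $\hat{U}_S$ does make sense on non-unital rings: the set $1 + tA\llbracket t \rrbracket$ is a group under formal multiplication intrinsically from $A$, since $(1+p)(1+q) = 1 + p + q + pq$ lies in $1 + tA\llbracket t \rrbracket$ regardless of whether $A$ has a unit, so no artificial adjoining of a unit is required. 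I do not anticipate any further obstacles.
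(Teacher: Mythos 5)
Your argument is correct, but it follows a genuinely different route from the paper's own one-line proof. The paper uses the pointed bijections $\psi_S : R_i^S \to E_S(R_i)$ from Theorem \ref{jc}, which by Corollary \ref{j7}(b) (or formula \eqref{j6}) satisfy $E_S(\alpha_i)\circ\psi_S = \psi_S\circ\alpha_i^S$; since the sequence $(R_i^S,\alpha_i^S)$ is exact componentwise and exactness at a term is a statement about images and preimages of $0$, exactness transfers across the pointed bijections with no reference to the group structure of $\hat{U}_S$ or to a short-exact-sequence decomposition. You instead pass through the topological group isomorphisms $\nu_{R,S} : \hat{U}_S(R) \to E_S(R)$, verify their naturality (which requires the naturality of $\mu_{A,S}$ and the fact that the quotient of $X_S(\tilde\alpha)$ is $E_S(\alpha)$ --- note that the latter is really the content of the \emph{proof} of Corollary \ref{j7}(a), not of its statement), and then invoke the exactness of $\hat{U}_S$ established just before Proposition \ref{t2}, splicing the long exact sequence into short ones. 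Both approaches are valid; yours trades a longer bookkeeping chain for the conceptual payoff of identifying $E_S$ with a concrete exact functor, while the paper exploits the Witt parametrization $\psi_S$ more directly and is quicker.
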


\begin{proof}
Via the maps $\psi_S$ the complex \eqref{eq:26n} gets identified with the following sequence of pointed (by $0= (0,0, \ldots)$'s) sets
\[
\ldots \longrightarrow R^S_{i+1} \xrightarrow{\alpha^S_{i-1}} R^S_i \xrightarrow{\alpha^S_i} R^S_{i+1} \longrightarrow \ldots
\]
\end{proof}

\begin{cor}
\label{t29-n23}
Assume that $l$-multiplication by some prime number $l$ is invertible on a ring $R$. Then $l$-multiplication is invertible on $E_S (R)$ as well.
\end{cor}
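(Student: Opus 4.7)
The plan is to reduce to finite truncation sets and then induct on cardinality. By \eqref{eq:8} one has the topological isomorphism $E_S(R) = \varprojlim_T E_T(R)$ with $T$ ranging over the finite truncation subsets of $S$. Multiplication by $l$ commutes with the transition maps, and the inverse limit of a compatible system of bijective additive self-maps is again bijective, the inverse being defined level by level; hence it suffices to treat the case of finite $S$.

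For finite $S$ I would argue by induction on $|S|$. The base case $S = \{1\}$ is immediate, since $E_{\{1\}}(R) = R$ and $l$-multiplication is bijective on $R$ by hypothesis. For the inductive step, choose a divisibility-maximal element $n \in S$ (for instance the numerically largest one). Then $S/n = \{1\}$ and $T := \{m \in S \mid n \nmid m\} = S \setminus \{n\}$, so Corollary \ref{t27n} provides a short exact sequence of abelian groups
\[
0 \longrightarrow R \xrightarrow{V_n} E_S(R) \longrightarrow E_T(R) \longrightarrow 0.
\]
Both $V_n$ and the projection are additive and therefore commute with $l$-multiplication on the three terms. By hypothesis $l$-multiplication is bijective on $R$, and by the inductive hypothesis it is bijective on $E_T(R)$; the short five lemma applied to the induced endomorphism of the above sequence then yields bijectivity on $E_S(R)$.

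The only subtlety worth flagging is that $l$-multiplication is not a ring endomorphism of $R$, so one cannot invoke the exact functoriality of $E_S$ from Corollary \ref{t28nn} directly. However, the short five lemma only needs additivity of the maps in the sequence, and Corollary \ref{t27n} supplies exactly that, so this is not a real obstacle.
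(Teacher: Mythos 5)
Your proof is correct, and it takes a genuinely different route from the paper's. The paper first treats unital $R$ and the full truncation set $S=\N$: via the additive isomorphism $E(R)\cong\hU(R)=1+tR\llbracket t\rrbracket$ of Theorem \ref{jc}, one solves $h^l=g$ for $h$ coefficient by coefficient, using invertibility of $l$ in $R$; then $l$ is a unit in the unital ring $E(R)$, so its image in $E_S(R)$ under the projection ring homomorphism is a unit as well; the non-unital case is reduced to the unital one by embedding $R$ into $\tR=R\oplus\Z[1/l]\cdot 1$ and invoking the exactness of $E_S$ (Corollary \ref{t28nn}). Your argument instead reduces to finite $S$ via the inverse limit \eqref{eq:8}, and then runs an induction on $|S|$ built on Corollary \ref{t27n}: picking a divisibility-maximal $n\in S$ gives $S/n=\{1\}$ and $T=S\setminus\{n\}$, so the sequence $0\to R\xrightarrow{V_n}E_S(R)\to E_T(R)\to 0$ together with the five lemma does the job, with base case $E_{\{1\}}(R)=R$. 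This is sound: the limit of a compatible system of additive bijections is a bijection, $l$-multiplication commutes with the additive maps $V_n$ and $\proj$, and your remark about not being able to apply Corollary \ref{t28nn} directly (since $l$-multiplication is not a ring map) is exactly the right caveat. What your approach buys is a uniform treatment of unital and non-unital $R$ with no power-series computation and no unitalization; what the paper's approach buys is an explicit, concrete description of the inverse to $l$ inside $\hU(R)$, plus the stronger conclusion in the unital case that $l$ is literally a unit of $E(R)$.
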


\begin{proof}
First assume that $R$ is unital. Then $l$ is a unit in $R$ by assumption. As an additive group, $E (R)$ is isomorphic to the multiplicative group $\hU (R) = 1 + tR \llbracket t \rrbracket$ by Theorem \ref{jc}. Given a power series $g \in \hU (R)$ the equation $h^l = g$ has a unique solution $h$ in $\hU (R)$ because $l$ is invertible in $R$. This follows by a simple inductive argument with power series coefficients. Hence $l$-multiplication is an isomorphism on $\hU (R)$ and hence also on $E (R)$. Since $E (R)$ is a unital ring, $l$ is a unit in $E (R)$. For a truncation set $S$ the ring homomorphism $E (R) \to E_S (R)$ maps units to units. Hence $l$ is a unit in $E_S (R)$ too. If $R$ does not have a unit element we embed it as an ideal into a unital ring $\tR$ in which $l$ is a unit. This is possible by assumption on $R$. We can take $\tR = R \oplus \Z [1 / l] \cdot 1$ with the evident multiplication for example. Then both $\tR$ and $\tR /R$ are unital rings on which $l$-multiplication is invertible. Hence $l$-multiplication is invertible on $E_S (\tR)$ and $E_S (\tR / R)$. Using the exact sequence from Corollary \ref{t28nn}
\[
0 \longrightarrow E_S (R) \longrightarrow E_S (\tR) \longrightarrow E_S (\tR / R) \longrightarrow 0
\]
it follows that $l$-multiplication is invertible on $E_S (R)$ as well.
\end{proof}

There is a natural ``ghost'' homomorphism of rings
\begin{equation}
\label{eq:18}
\Gh_S : E_S (R) = X_S (\Z R) / X_S (I) \longrightarrow (\Z R)^S / I^S = R^S \; .
\end{equation}

\begin{cor}
\label{t6}
For a ring $R$ without $S$-torsion, we have $E_S (R) = X_S (R)$. In particular $E_S (R)$ has no $S$-torsion as well and the ghost map $\Gh_S : E_S (R) \to R^S$ is injective.
\end{cor}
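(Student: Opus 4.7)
The plan is to exploit the freedom, built into Corollary \ref{j7}, to choose any $S$-torsion free presentation of $R$ when computing $E_S(R)$. If $R$ itself has no $S$-torsion, then the trivial presentation
\[
0 \longrightarrow 0 \longrightarrow R \xrightarrow{\id} R \longrightarrow 0
\]
is allowed, and by Definition \ref{tx4} one has $E_S(R,\id) = X_S(R)/X_S(0) = X_S(R)$. Corollary \ref{j7}, c) then gives a canonical topological ring isomorphism $E_S(R) \cong E_S(R,\id) = X_S(R)$, which is the first assertion.

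For the statement about the ghost map, I would exhibit the isomorphism $E_S(R) \cong X_S(R)$ explicitly as being induced by $\pi^S$ on $X_S(\Z R)$. Namely, the morphism of presentations
\[
\xymatrix{
0 \ar[r] & I \ar[r] \ar[d]^{\pi} & \Z R \ar[r]^{\pi} \ar[d]^{\pi} & R \ar[r] \ar[d]^{\id} & 0 \\
0 \ar[r] & 0 \ar[r] & R \ar[r]^{\id} & R \ar[r] & 0
}
\]
is a lift of $\id_R$, so by the argument in the proof of Corollary \ref{j7}, a) the induced map $E_S(\id_R) : E_S(R,\pi) \to E_S(R,\id) = X_S(R)$ is precisely the map obtained by descending $\pi^S : X_S(\Z R) \to X_S(R) \subset R^S$ through $X_S(I)$. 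Composing with the inclusion $X_S(R) \hookrightarrow R^S$ gives exactly the ghost map \eqref{eq:18}, so the ghost map factors as
\[
E_S(R) \;\xrightarrow{\;\cong\;}\; X_S(R) \;\hookrightarrow\; R^S,
\]
and is in particular injective.

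Finally, $R^S$ has no $S$-torsion since $R$ does not, so neither does the subring $X_S(R) \cong E_S(R)$. There is really no hard step: the only thing to verify carefully is that the canonical identification $E_S(R,\pi_1) \cong E_S(R,\id)$ from Corollary \ref{j7}, c) agrees with the map induced by $\pi^S$, which is immediate from the characterization of $E_S(\alpha)$ given there.
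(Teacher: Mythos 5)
Your proof is correct and takes essentially the same route as the paper: the paper's argument is exactly to invoke the trivial $S$-torsion-free presentation $0 \to 0 \to R \xrightarrow{\id} R \to 0$ together with the identification from Corollary \ref{j7}, c), and leaves the compatibility with the ghost map implicit. Your extra verification that $E_S(\id_R)$ is induced by $\pi^S$, so that $\Gh_S$ factors as $E_S(R) \cong X_S(R) \hookrightarrow R^S$, is a correct and welcome expansion of a detail the paper does not spell out.
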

\begin{proof}
For the presentation $0 \to 0 \to R \xrightarrow{\id} R \to 0$ we have $E_S (R,\id) = X_S (R)$.
\end{proof}

Using the bijection $\psi_S$ from Theorem \ref{jc} one can introduce the usual Witt vector rings as follows.
Let $W_S (R)$ be the set $R^S$ equipped with the unique ring structure for which $\psi_S$ is an isomorphism. Using $\psi_S$ we also transport $F_n , V_m$ and the Teichm\"uller character to $W_S (R)$. This ring structure on $W_S (R)$ is functorial in rings. The ``ghost map'' $w_S : W_S (R) \to R^S$ given by the formula $w_S ((r_n))_m = \sum_{n \mid m} nr^{m/n}_n$ for $m \in S$, is a homomorphism of rings because the following diagram commutes
\begin{equation}
\label{eq:19}
\xymatrix{
R^S \ar[r]^{\overset{\psi_S}{\sim}} \ar@{=}[d] & E_S (R) \ar[d]^{\Gh_S} \\
W_S (R) \ar[r]^{w_S} & R^S
}
\end{equation}
Let $R$ be a unital ring. Then the above ring structure on $W_S(R)$ together with Frobenius, Verschiebung and Teichm\"{u}ller character is the usual one.
Namely, the Witt vector ring structure of $R^S$ is characterized as the only functorial (in unital rings $R$) ring structure on $R^S$ for which all ghost maps are homomorphisms of rings. The uniqueness is easy to see. Namely, $w_S$ is injective if $R$ has no $S$-torsion and every ring $R$ is a quotient of a ring without $\Z$-torsion. Moreover, since $\Gh_S$ and hence $w_S$ is compatible with $F_n , V_n$ and the Teichm\"uller map it follows that these are identified with the standard ones on $W_S (R)$. Our existence proof for the functorial ring structure on $R^S$ which makes $w_S$ a homomorphism does not use universal polynomials for addition and multiplication on $R^S$.

\begin{rem}
From our perspective we obtain the universal polynomials in Witt vector theory as follows. Let $A = \Z [T_s \mid s \in S]$ be the polynomial algebra with variables indexed by $S$ and set $W_S = \spec A$. For a unital ring $R$, the $R$-valued points of $W_S$ are given by
\[
W_S (R) = \Hom_{\Z-\alg} (A , R) = R^S \; .
\]
We have equipped $W_S (R)$ with functorial ring structures. Hence $W_S$ becomes a commutative ring scheme over $\Z$. Addition and multiplication on $W_S$ correspond to homomorphisms of $\Z$-algebras $\Delta_+ , \Delta_{\hullet} : A \to A \otimes_{\Z} A$. Explicitly, $\Delta_+$ and $\Delta_{\hullet}$ are the images of $\id_{A \otimes A}$ under the addition resp. multiplication map on $W_S (R) \times W_S (R)$ for $R = A \otimes_{\Z} A$
\begin{align*}
\Hom_{\Z-\alg} (A \otimes A , A \otimes A) & = W_S (A \otimes A) \times W_S (A \otimes A) \to W_S (A \otimes A) \\
& = \Hom_{\Z-\alg} (A , A \otimes A) \; .
\end{align*}
\end{rem}
We identify $A \otimes A$ with the polynomial algebra $\Z [X_n ; Y_m \mid n, m \in S]$. Then $\Delta_+$ and $\Delta_{\hullet}$ correspond to polynomials $P_s = \Delta_+ (T_s) , Q_s = \Delta_{\hullet} (T_s)$ for $s \in S$ in variables $X_n , Y_m$ for $n,m \in S$. Addition and multiplication of $(x_s)$ and $(y_s)$ in $W_S (R) = R^S$ are then given by
\[
(x_s) + (y_s) = (P_s (x_n ; y_m \mid n,m \in S)) \quad \text{and} \quad (x_s) \cdot (y_s) = (Q_s (x_n ; y_m \mid n,m \in S)) \; .
\]
Similarly there are polynomials corresponding to the additive inverse and the induced Frobenius, Verschiebung and Teichm\"uller maps.

For rings $R$ which are not necessarily unital, the ring structure on $W_S (R)$ defined above is given by the same polynomials for addition and multiplication. This is clear because any ring embeds into a unital ring.

We now discuss certain idempotents in the ring $E_S (R)$ which generalize the classical Artin--Hasse exponential. The treatment shows again that general presentations are useful.

\begin{proposition} \label{t213xn}
Let $T \subset S$ be truncation sets and assume that an integer $n \in S$ lies in $T$ if and only if all its prime divisors lie in $T$. Consider a ring $R$ on which $l$-multiplication is invertible for all prime numbers $l \in S \setminus T$.\\
(1) The following product of commuting idempotent endomorphisms of the ring $E_S (R)$
\[
\varepsilon_T = \prod_{l \in S \setminus T} (\id - l^{-1} V_l F_l) : E_S (R) \longrightarrow E_S (R)
\]
converges pointwise. The projection $E_S (R) \to E_T (R)$ maps the image of $\varepsilon_T$ isomorphically to $E_T (R)$.\\
(2) If $R$ is unital let $e_T \in E_S (R)$ be the idempotent element
\[
e_T = \varepsilon_T (1) = \prod_{l \in S \setminus T} (1 - l^{-1} V_l (1)) \; .
\]
The projection induces an isomorphism of unital rings $e_T E_S (R) \silo E_T (R)$ mapping $e_T$ to $1 \in E_T (R)$. In particular, ring theoretically $E_T (R)$ is a direct factor of $E_S (R)$.
\end{proposition}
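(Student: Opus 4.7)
The plan is to analyze each factor $q_l := \id - l^{-1}V_lF_l$ for primes $l \in S \setminus T$, then handle convergence of the infinite product via the projective limit \eqref{eq:8}, and finally read off (1) and (2). Each $q_l$ is well-defined on $E_S(R)$ by Corollary~\ref{t29-n23}, is idempotent via $F_lV_l = l\cdot\id$ (the case $n=m=l$ of Proposition~\ref{t1}(b)), and commutes with $q_{l'}$ for $l \neq l'$ via $F_lV_{l'} = V_{l'}F_l$. Each $q_l$ is moreover a (non-unital) ring endomorphism: the cross and square terms in $q_l(a)q_l(b)$ collapse via Frobenius reciprocity $a V_l(x) = V_l(F_l(a)x)$ and the product identity $V_l(\bfb)V_l(\bfc) = l V_l(\bfb\bfc)$ from Proposition~\ref{t1}(c), giving $q_l(a)q_l(b) = q_l(ab)$. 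The pivotal observation for convergence is that $V_lF_l V_n\langle r\rangle$ equals $V_{ln}\langle r^l\rangle$ when $l \nmid n$ and $l V_n\langle r\rangle$ when $l \mid n$; in either case the output is a Verschiebung at an index divisible by $l$. Hence for any finite truncation subset $T' \subset S$ and any prime $l > \max T'$, the projection $\pi_{T'}: E_S(R) \to E_{T'}(R)$ kills $V_lF_l$, so $\pi_{T'} q_l = \pi_{T'}$. Only finitely many primes $l \in S \setminus T$ with $l \leq \max T'$ act nontrivially after $\pi_{T'}$, so partial products stabilize and $\varepsilon_T(\xi) = \lim_F \prod_{l \in F} q_l(\xi)$ exists in $E_S(R) = \varprojlim_{T'}E_{T'}(R)$, inheriting continuity, additivity, multiplicativity, and idempotence.

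For (1), the same computation applied to $\pi_T : E_S(R) \to E_T(R)$ gives $\pi_T V_l F_l = 0$ for $l \in S \setminus T$, since the hypothesis on $T$ forbids any $m \in T$ from being divisible by such an $l$; hence $\pi_T\varepsilon_T = \pi_T$, yielding surjectivity of $\pi_T|_{\Imm(\varepsilon_T)}$ onto $E_T(R)$. For injectivity, the kernel of $\pi_T$ is the closed subgroup generated by $V_n\langle r\rangle$ with $n \in S \setminus T$; each such $n$ admits some prime divisor $l \in S \setminus T$, and then $q_l V_n\langle r\rangle = V_n\langle r\rangle - l^{-1}\cdot l V_n\langle r\rangle = 0$, so by commutativity of the factors $\varepsilon_T V_n\langle r\rangle = 0$. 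Continuity propagates this to $\varepsilon_T|_{\ker \pi_T} = 0$, and idempotence of $\varepsilon_T$ turns it into $\Imm(\varepsilon_T) \cap \ker \pi_T = 0$. A continuous inverse to $\pi_T|_{\Imm(\varepsilon_T)}$ is provided by composing the zero-extension $R^T \hookrightarrow R^S$ with $\psi_S$ and then with $\varepsilon_T$, so (1) follows as an isomorphism of topological rings.

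For (2), multiplicativity of $\varepsilon_T$ yields $e_T^2 = e_T$. The decisive input is the vanishing $F_l(e_T) = 0$ for every $l \in S \setminus T$: applying the ring homomorphism $F_l$ to the defining product, the factor with $l' = l$ becomes $1 - l^{-1}F_lV_l(1) = 1 - 1 = 0$. For $n \in S \setminus T$, pick a prime divisor $l \in S \setminus T$ of $n$ and factor $F_n = F_{n/l}\circ F_l$, so $F_n(e_T) = 0$; Frobenius reciprocity then gives $e_T \cdot V_n\langle r\rangle = V_n(F_n(e_T)\langle r\rangle_{S/n}) = 0$. Hence $e_T \cdot \ker \pi_T = 0$, and combined with $\pi_T(e_T x) = \pi_T(x)$ (which comes from $\pi_T(e_T) = 1$) this produces the ring isomorphism $e_T E_S(R) \silo E_T(R)$ of (2). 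The main obstacle is the convergence step: one must exploit the fact that $V_lF_l$ only produces Verschiebungs at indices divisible by $l$ in order to reduce the infinite product, after any projection $\pi_{T'}$, to a finite composition of commuting idempotent ring endomorphisms. Once this reduction is secured, everything else follows from the Frobenius--Verschiebung calculus of Proposition~\ref{t1} and the combinatorial hypothesis on $T$.
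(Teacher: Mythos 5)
Your proof is correct but takes a genuinely different route from the paper. The paper reduces to a torsion-free presentation $A$ in which $l$-multiplication is invertible for $l \in S\setminus T$, so that $\varepsilon_T$ can be computed explicitly in ghost coordinates (it simply zeroes out components at indices in $S\setminus T$, which is manifestly an idempotent ring endomorphism of $A^S$ leaving $X_S(A)$ invariant); it then deduces both the isomorphism in (1) and the fact that (2) follows from (1) from this concrete description, with the only nontrivial check being $\varepsilon_T(X_S(I)) = \varepsilon_T(X_S(A)) \cap X_S(I)$. You instead work intrinsically in $E_S(R)$: you verify directly via Frobenius reciprocity that each $q_l = \id - l^{-1}V_lF_l$ is a (non-unital) ring endomorphism, track its action on the generators $V_n\langle r\rangle$, handle convergence of the infinite product through the projective limit $E_S(R) = \varprojlim_{T'} E_{T'}(R)$, and split (1) into surjectivity ($\pi_T \varepsilon_T = \pi_T$, since $\pi_T V_l F_l = 0$) and injectivity ($\varepsilon_T$ annihilates $\ker\pi_T$ because every $n \in S\setminus T$ has a prime divisor $l \in S\setminus T$ for which $q_l V_n\langle r\rangle = 0$). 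Your proof of (2) is then essentially self-contained, resting on the vanishing $F_n(e_T) = 0$ for $n \in S\setminus T$ and Frobenius reciprocity $e_T\, V_n\langle r\rangle = V_n(F_n(e_T)\langle r\rangle_{S/n})$, rather than being derived from (1) via the identity $\varepsilon_T(x) = e_T x$ that the paper's ghost-coordinate picture makes visible. What your approach buys is that it never leaves $E_S(R)$ and never invokes a choice of presentation, and it makes the exact use of the hypothesis on $T$ transparent (one direction for surjectivity, the other for injectivity); what the paper's approach buys is brevity and an explicit closed-form description of $\varepsilon_T$ from which both parts follow at once.
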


\begin{rem}
A description of Artin--Hasse idempotents as in (2) appears already in \cite{Ch} 1.1.6.
\end{rem}

\begin{proof}
Part (2) follows from (1). Using Corollary \ref{t29-n23} we obtain the commuting idempotents $l^{-1} V_l F_l$ and $\varepsilon_l = \id - l^{-1} V_l F_l$ in $\End (E_S (R))$. Choose a presentation \eqref{eq:4} such that $A$ has no $S$-torsion and such that $l$-multiplication on $A$ is invertible for all $l \in S \setminus T$. The kernel $I$ then has the same properties. For example we can take the standard presentation \eqref{eq:i1a} and tensor it with $\Lambda = \Z [1 / l$ for $l \in S \setminus T]$. Using Corollaries \ref{t29-n23} and \ref{t6} we obtain idempotent endomorphisms $\varepsilon_l$ as above on $E_S (A) = X_S (A)$ and $E_S (I) = X_S (I)$. They are easy to calculate explicitly. Consider the composition
\[
l^{-1} V_l F_l : A^S \longrightarrow A^{S/l} \longrightarrow A^S \; .
\]
We have
\[
l^{-1} V_l F_l (\bfx)_n = x_n \quad \text{if $l \mid n$ and $= 0$ if $l \nmid n$.}
\]
The resulting idempotent endomorphism of $A^S$
\begin{equation}
\label{eq:22}
\varepsilon_T = \prod_{l \in S \ohne T} (\id - l^{-1} V_l F_l)
\end{equation}
is explicitly given by the formula
\begin{equation}
\label{eq:23}
\varepsilon_T (\bfx)_n = x_n \quad \text{if $n \in T$, and $\varepsilon_T (\bfx)_n = 0$ if $n \in S \ohne T$.}
\end{equation}
Hence $\varepsilon_T$ is even a ring endomorphism of $A^S$ and we have seen that it leaves the subring $X_S (A)$ of $A^S$ invariant. It follows that the restriction of the projection $X_S (A) \to X_T (A)$ to $\varepsilon_T (X_S (A))$ induces an isomorphism of rings
\begin{equation}
\label{eq:24}
\varepsilon_T (X_S (A)) \xrightarrow{\sim} X_T (A) \; .
\end{equation}

By Corollary \ref{j7} c) we have $E_S (R) = X_S (A) / X_S (I)$ and similarly for $E_T (R)$. The isomorphisms \eqref{eq:24} for $A$ and similarly for $I$ induce an isomorphism
\[
\frac{\varepsilon_T (X_S (A))}{\varepsilon_T (X_S (I))} \xrightarrow{\sim} E_T (R) \; .
\]
We claim that the group on the left is the image of the map $\varepsilon_T : E_S (R) \to E_S (R)$. For this we need to check that
\[
\varepsilon_T (X_S (I)) = \varepsilon_T (X_S (A)) \cap X_S (I) \; .
\]
The inclusion "$\subset$" being clear, let $x = \varepsilon_T (y) \in X_S (I)$. Then $x = \varepsilon_T (x) \in \varepsilon_T (X_S (I))$ as required.
\end{proof}

We end this section by explaining a natural way to extend the $E_S$-theory from commutative to non-commutative rings. This will not be used in the sequel.

For a non-commutative ring $A$ the map $\langle \rangle : A \to A^S$ is no longer multiplicative. We define $X_S (A) \subset A^S$ to be the closed additive subgroup generated by all elements of the form $V_n (\langle a_1 \rangle \cdots \langle a_{\nu} \rangle )$ where $n \in S , \nu \ge 1$ and $a_1 , \ldots , a_{\nu} \in A$. For $\mu \ge 1, m \in S$ and $b_1 , \ldots , b_{\mu} \in A$ the relation (implied by Proposition \ref{t1})
\[
V_n (\langle a_1 \rangle \cdots \langle a_{\nu} \rangle) V_m (\langle b_1 \rangle \cdots \langle b_{\mu} \rangle) = (n,m) V_{[n,m]} (\langle a^{m'}_1 \rangle \cdots \langle a^{m'}_{\nu}\rangle \langle b^{n'}_1 \rangle \cdots \langle b^{n'}_{\mu} \rangle)
\]
shows that $X_S (A) \subset A^S$ is a closed topological subring of $A^S$. Here $n' = n / (n,m)$ and $m' = m / (n,m)$. For an ideal $I$ in $A$ we define $X_S (I,A) \subset A^S$ to be the closed subgroup generated by elements $V_n (\langle a_1 \rangle \cdots \langle a_{\nu} \rangle)$ as above where now $a_i \in I$ for at least one $1 \le i \le \nu$. Then $X_S (I , A)$ is a closed two-sided ideal in $X_S (A)$. For $n \in S$ we have by definition: 
\[
V_n (X_{S / n} (A)) \subset X_S (A) \quad \text{and} \quad V_n (X_{S /n} (I, A)) \subset X_S (I, A)  \; .
\]
The formula
\[
F_m V_n (\langle a_1 \rangle \cdots \langle a_{\nu} \rangle) = (n,m) V_{n'} (\langle a^{m'}_1 \rangle \cdots \langle a^{m'}_{\nu} \rangle)
\]
which follows from Proposition \ref{t1} shows that for $m \in S$ we have:
\[
F_m (X_S (A)) \subset X_{S/m} (A) \quad \text{and} \quad F_m (X_S (I,A)) \subset X_{S/m} (I,A) \; .
\]
For commutative $A$ we recover the previously defined ring $X_S (A)$ and ideal $X_S (I)$. For a possibly non-commutative ring $R$ consider the exact sequence $0 \to I \to \Z R \to R \to 0$ and set
\[
E_S (R) := X_S (\Z R) / X_S (I , \Z R) \; .
\]
This is a possibly non-commutative ring and there are ring-homomorphisms $F_m : E_S (R) \to E_{S/m} (R)$ for $m \in S$ and additive homomorphisms $V_n : E_{S/ n} (R) \to E_S (R)$ for $n \in S$. The analogues of the relations in Proposition \ref{t1} hold for the maps $F_m , V_n$ between the rings $E_S (R)$. Moreover there is a set-theoretical splitting
\[
\langle \; \rangle : R \to E_S (R) \, , \; r \mapsto \langle [r] \rangle \mod X_S (I , \Z R)
\]
of the natural projection $E_S (R) \to A /I = R$. Note that for $m \in S$ we have $F_m ( \langle r \rangle) = \langle r \rangle^m$ for the ring homomorphism $F_m$. The construction of $E_S (R)$ is functorial in $R$ and extends the previously defined theory from commutative to non-commutative rings. In \cite{H1} Hesselholt introduced an additive Witt group for non-commutative rings. It would be interesting to understand the relation between the two approaches.

\section{A relative de~Rham Witt complex} \label{sec:3}
In this section we define a relative de~Rham Witt complex $E_S \Omega^{\bfdot}_{R / R_0}$ for every $R_0$-algebra $R$ and every truncation set $S$. Here both $R$ and the base ring $R_0$ as well as their presentations are supposed to be commutative and unital. We also show that $E_S \Omega^{\bfdot}_{R / R_0}$ is isomorphic to Chatzistamatiou's relative de~Rham Witt complex \cite{Ch}.

Recall that for an $R$-module $M$ the exterior algebra $\Lambda^{\bfdot} M$ over $R$ is the graded commutative $R$-algebra obtained by dividing the tensor algebra of $M$ over $R$ by the two-sided ideal generated by the elements $m \otimes m$ in degree two where $m \in M$. If $M$ is a free $R$-module the algebra $\Lambda^{\bfdot} M$ is a free $R$-module as well. Let $\Omega^1_{R / R_0}$ be the $R$-module of relative differential $1$-forms of $R$ over $R_0$ and set $\Omega^{\bfdot}_{R / R_0} = \Lambda^{\bfdot} \Omega^1_{R / R_0}$. There is a unique extension of the $R_0$-linear differential $d : R \to \Omega^1_{R / R_0}$ to an $R_0$-linear endomorphism $d$ of $\Omega^{\bfdot}_{R / R_0}$ that makes $\Omega^{\bfdot}_{R / R_0}$ a differential graded $R_0$-algebra (or $R_0$-dga for short).

For convenience we make the following definition:

\begin{definition}
\label{tn41}
A presentation $(\pi)$ of an $R_0$-algebra $R$ is a commutative diagram with exact rows:
\begin{equation}
\label{eq:n41}
\xymatrix{
0 \ar[r] & I_0 \ar[r] \ar@{^{(}->}[d] & A_0 \ar[r]^{\pi_0} \ar@{^{(}->}[d] & R_0 \ar[r] \ar[d] & 0 \\
0 \ar[r] & I \ar[r] & A \ar[r]^{\pi} & R \ar[r] & 0
}
\end{equation}
where $A_0$ has no $\Z$-torsion and $A$ is a polynomial ring over $A_0$ via the vertical inclusion in the middle. We call the presentation $(\pi)$ free if in addition $A_0$ is a polynomial ring over $\Z$.
\end{definition}

Note that we may view $\pi$ as a map of $A_0$-algebras. By Theorem \ref{jc} the topological rings
\[
E_S (R_0,\pi_0) = X_S (A_0) / X_S (I_0) \quad \text{and} \quad E_S (R,\pi) = X_S (A) / X_S (I)
\]
are canonically isomorphic to $E_S (R_0)$ and $E_S (R)$ respectively. Consider the algebras of relative differential forms
\[
\Omega = \Omega^{\bfdot}_{A / A_0} \quad \text{and} \quad \Omega_{\Q} = \Omega^{\bfdot}_{A / A_0} \otimes_{\Z} \Q \; .
\]
Since $\Omega$ is a free $A_0$-module, $\Omega$ is a subring of $\Omega_{\Q}$. We view $\Omega^S_{\Q}$ as a differential graded ring with the differential $\dl$ defined by
\begin{equation} \label{eq:n421a}
\dl ((\omega_n)_{n \in S}) = (n^{-1} d\omega_n)_{n \in S} \; .
\end{equation}
The families $(\Omega^S)$ and $(\Omega^S_{\Q})$ are equipped with Frobenius and Verschiebung operators satisfying the relations in Proposition \ref{t1}. Since $\Omega^0 = A$, we have inclusions
\[
X_S (A) \subset A^S \subset \Omega^S \subset \Omega^S_{\Q} \; .
\]

\begin{definition}
\label{tn42}
We denote by $X^{\bfdot}_S (A)$ the closed differential graded subring of $(\Omega^S_{\Q}, \dl)$ generated by $X_S (A)$. Moreover $X^{\bfdot}_S (I , A)$ denotes the closed differential graded ideal in $X^{\bfdot}_S (A)$ generated by $X_S (I)$. Set
\[
E_S \Omega^{\bfdot} (\pi) = X^{\bfdot}_S (A) / X^{\bfdot}_S (I,A) \; .
\]
It is a Hausdorff topological dg-ring.
\end{definition}

By construction $E_S \Omega^0 (\pi) = E_S (R,\pi) \equiv E_S (R)$ and the differential of $E_S \Omega^{\bfdot} (\pi)$ is $E_S (R_0,\pi_0) \equiv E_S (R_0)$-linear, viewing $E_S (R)$ as an $E_S (R_0)$-algebra.

\begin{proposition}
\label{tn43}
For finite $S$ there is a unique homomorphism of differential graded $E_S (R_0)$-algebras
\begin{equation}
\label{eq:n42}
\alpha : \Omega^{\bfdot}_{E_S (R) / E_S (R_0)} \longrightarrow E_S \Omega^{\bfdot} (\pi)
\end{equation}
which is the identification $E_S (R) \equiv E_S (R,\pi)$ in degree zero. The map $\alpha$ is surjective.
\end{proposition}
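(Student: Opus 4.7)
The plan is to obtain $\alpha$ from the universal property of the relative de~Rham complex: $\Omega^{\bfdot}_{E_S(R)/E_S(R_0)}$ is initial among graded-commutative dg-$E_S(R_0)$-algebras $B$ (concentrated in non-negative degrees) equipped with a distinguished $E_S(R_0)$-algebra map $E_S(R) \to B^0$. Uniqueness of $\alpha$ follows immediately, so the substantive work is (i) to endow $E_S\Omega^{\bfdot}(\pi)$ with the needed structure and (ii) to verify surjectivity.

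For (i), I would first note that $\Omega^S_\Q$ with componentwise product and the differential $\dl$ from \eqref{eq:n421a} is a graded-commutative dg-ring: graded-commutativity and the Leibniz identity for $\dl$ descend componentwise from $\Omega^{\bfdot}_{A/A_0} \otimes \Q$. Hence its subquotient $E_S\Omega^{\bfdot}(\pi) = X^{\bfdot}_S(A)/X^{\bfdot}_S(I,A)$ is a graded-commutative dg-ring. The structure map $E_S(R_0) \equiv E_S(R_0,\pi_0) \to E_S(R,\pi) \equiv E_S\Omega^0(\pi)$ makes it an $E_S(R_0)$-algebra, and for this to be a dg-$E_S(R_0)$-algebra one must check that the induced differential $d_S$ annihilates the image of $E_S(R_0)$. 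This is the one point requiring the relative nature of the construction: $E_S(R_0)$ is represented by elements of $X_S(A_0) \subset (A_0)^S \subset A^S = (\Omega^0)^S$, and since $da = 0$ in $\Omega^{\bfdot}_{A/A_0}$ for $a \in A_0$, $\dl$ kills $X_S(A_0)$, whence $d_S$ vanishes on $E_S(R_0)$. With this in hand, the universal property produces the unique map $\alpha$ of dg-$E_S(R_0)$-algebras extending the degree-zero identification.

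For (ii), the decisive point is finiteness of $S$. Since $A$ carries the discrete topology, so does $\Omega^{\bfdot}_{A/A_0} \otimes \Q$, and for finite $S$ the product $\Omega^S_\Q$ is discrete as well; thus ``closed'' in Definition~\ref{tn42} is vacuous and $X^{\bfdot}_S(A)$ is merely the dg-subring of $\Omega^S_\Q$ generated by $X_S(A)$. Using the Leibniz rule and $\dl^2 = 0$, every element of this dg-subring can be written as a finite sum of monomials $x_0\,\dl x_1 \cdots \dl x_n$ with $n \ge 0$ and $x_i \in X_S(A)$. Writing $\bar x_i$ for the class of $x_i$ in $E_S(R) = E_S(R,\pi)$, one has $\alpha(\bar x_0\, d\bar x_1 \cdots d\bar x_n) = \bar x_0\,d_S \bar x_1 \cdots d_S \bar x_n$, which is exactly the class of $x_0\,\dl x_1 \cdots \dl x_n$ in $E_S\Omega^{\bfdot}(\pi)$. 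Hence the image of $\alpha$ spans, and therefore exhausts, $E_S\Omega^{\bfdot}(\pi)$. The only step I expect to require care is the vanishing of $d_S$ on $E_S(R_0)$; everything else is formal, via the universal property on one side and the reduction to discrete topology on the other.
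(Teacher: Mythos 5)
Your proposal is correct and follows essentially the same route as the paper's (very terse) proof: uniqueness and existence from the universal property of $\Omega^{\bfdot}_{-/-}$, and surjectivity from the observation that $E_S\Omega^{\bfdot}(\pi)$ is dg-generated by its degree-zero part. You usefully spell out the two points the paper leaves implicit — that $\dl$ annihilates $X_S(A_0)$ because $d$ vanishes on $A_0$ in $\Omega^{\bfdot}_{A/A_0}$, so that the differential on $E_S\Omega^{\bfdot}(\pi)$ really is $E_S(R_0)$-linear, and that finiteness of $S$ makes the topology on $\Omega^S_{\Q}$ discrete, so ``closed dg-subring'' degenerates to ``dg-subring'' and the Leibniz-rule reduction to monomials $x_0\,\dl x_1\cdots\dl x_n$ with $x_i\in X_S(A)$ gives genuine (not merely topological) dg-generation.
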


\begin{proof}
Existence and uniqueness of $\alpha$ follow from the universal property of $\Omega^{\bfdot}$. By definition, $E_S \Omega^{\bfdot} (\pi)$ is generated as a dga by elements in degree zero. Hence $\alpha$ is surjective.
\end{proof}

We now discuss functoriality and uniqueness. Consider a commutative square
\begin{equation}
\label{eq:n43}
\xymatrix{
R_0 \ar[r] \ar[d] & R'_0 \ar[d]\\
R \ar[r] & R'
}
\end{equation}
and assume that we have a presentation $(\pi')$ of the $R'_0$-algebra $R'$
\begin{equation}
\label{eq:n44}
\xymatrix{
0 \ar[r] & I'_0 \ar[r] \ar@{^{(}->}[d] & A'_0 \ar[r]^{\pi'_0} \ar@{^{(}->}[d] & R'_0 \ar[r] \ar[d] & 0 \\
0 \ar[r] & I' \ar[r]                   & A' \ar[r]^{\pi'}                     & R' \ar[r]   & 0
}
\end{equation}
If $(\pi)$ is free there are a $\Z$-algebra homomorphism $A_0 \to A'_0$ and an $A_0$-algebra homomorphism $A \to A'$ which together with the horizontal maps in \eqref{eq:n43} induce a map from diagram \eqref{eq:n41} to diagram \eqref{eq:n44}. We obtain a commutative diagram
\begin{equation}
\label{eq:n45}
\xymatrix@C=0em{
X^{\bfdot}_S (I,A) \ar[d] \subset & X^{\bfdot}_S (A) \ar[d] \subset & (\Omega^{\bfdot}_{A / A_0} \otimes \Q)^S \ar[d] \\
X^{\bfdot}_S (I',A') \subset & X^{\bfdot}_S (A') \subset & (\Omega^{\bfdot}_{A' / A'_0} \otimes \Q)^S
}
\end{equation}
We will view $E_S \Omega^{\bfdot} (\pi')$ as a dga over $E_S (R_0)$ via the map $E_S (R_0) \to E_S (R'_0)$.

\begin{cor}
\label{tn44}
a) Assume we are given a square \eqref{eq:n43} and presentations $(\pi), (\pi')$ where $(\pi)$ is free. Then there is a unique continuous homomorphism of differential graded $E_S (R_0)$-algebras
\begin{equation}
\label{eq:n46}
E_S \Omega^{\bfdot} (\pi) \longrightarrow E_S \Omega^{\bfdot} (\pi')
\end{equation}
which is the map $E_S (R) \to E_S (R')$ in degree zero. \\
b) For two free resolutions $(\pi)$ and $(\pi')$ of the same $R_0$-algebra $R$ there is a unique topological isomorphism of differential graded $E_S (R_0)$-algebras \eqref{eq:n46} which is the identity on $E_S (R)$ in degree zero (more precisely it is the identification $E_S (R,\pi) \equiv E_S (R',\pi')$).
\end{cor}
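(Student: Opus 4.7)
The plan for (a) is to exploit the freeness of $(\pi)$ to lift the given ring maps to the level of presentations, transport the construction through K\"ahler differentials, and pass to the quotient. Since $A_0$ is a polynomial $\Z$-algebra and $A$ is polynomial over $A_0$, one first lifts the composition $A_0 \to R_0 \to R'_0$ to a $\Z$-algebra homomorphism $\tf_0 : A_0 \to A'_0$ by sending each polynomial generator to an arbitrarily chosen preimage under the surjection $\pi'_0$, and then lifts $A \to R \to R'$ to an $A_0$-algebra map $\tf : A \to A'$ (with $\tf_0$ acting on scalars) in the same fashion. By construction $\tf(I) \subset I'$ and $\tf_0(I_0) \subset I'_0$, so $\tf$ assembles into a morphism from diagram \eqref{eq:n41} to diagram \eqref{eq:n44}.

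By functoriality of K\"ahler differentials, $\tf$ induces a homomorphism of differential graded rings $\Omega^{\bfdot}_{A/A_0} \to \Omega^{\bfdot}_{A'/A'_0}$; tensoring with $\Q$ and applying componentwise over $S$ then yields a continuous homomorphism of differential graded topological rings which commutes with $\dl$, because $\dl$ is defined componentwise from $d$ and $d$ is natural in the algebra. On generators one has $V_n \langle a\rangle \mapsto V_n \langle \tf(a)\rangle$, so $X_S(A)$ maps into $X_S(A')$ and $X_S(I)$ into $X_S(I')$. Consequently the induced map carries the closed dg-subring $X^{\bfdot}_S(A)$ into $X^{\bfdot}_S(A')$ and the closed dg-ideal $X^{\bfdot}_S(I, A)$ into $X^{\bfdot}_S(I', A')$, so passing to quotients produces a continuous homomorphism of differential graded $E_S(R_0)$-algebras
\[
\Phi : E_S \Omega^{\bfdot}(\pi) \longrightarrow E_S \Omega^{\bfdot}(\pi')
\]
which in degree zero agrees with the functorial map $E_S(R) \to E_S(R')$, by Corollary~\ref{j7}(a) applied to $\tf$ as a lift of $R \to R'$. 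Uniqueness follows because $E_S \Omega^{\bfdot}(\pi)$ is topologically generated as a differential graded ring by its degree-zero subring $E_S(R)$; any continuous dg $E_S(R_0)$-algebra homomorphism out of it is therefore determined by its restriction to degree zero.

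For (b), one applies (a) in both directions to $\id_R$, which is admissible since both $(\pi)$ and $(\pi')$ are free, obtaining continuous dg $E_S(R_0)$-algebra maps in each direction. Their two compositions are continuous dg $E_S(R_0)$-endomorphisms inducing the identity on $E_S(R)$ in degree zero, so by the uniqueness proved in (a) both equal the identity; hence the map of (a) is a topological isomorphism. The subtle point is independence of $\Phi$ from the choices of lifts $\tf_0, \tf$: this is not needed to exhibit $\Phi$, and a posteriori it is immediate from the uniqueness statement, since any choice of lift produces a continuous dg $E_S(R_0)$-algebra map with the same degree-zero component.
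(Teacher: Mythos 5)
Your proof is correct and follows essentially the same route as the paper: use freeness of $(\pi)$ to lift the maps on presentations, transport through the functoriality of K\"ahler differentials to obtain diagram \eqref{eq:n45} and hence a map of quotient dg-algebras, and deduce uniqueness from the fact that $E_S\Omega^{\bfdot}(\pi)$ is topologically dg-generated in degree zero; part (b) is then formal. The extra explanations (explicit choice of lifts on generators, commutation with $\dl$ componentwise, identification of the degree-zero component via Corollary~\ref{j7}(a)) are all sound and merely unpack what the paper's diagram \eqref{eq:n45} and surrounding discussion already encode.
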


\begin{proof}
Diagram \eqref{eq:n45} implies that the map \eqref{eq:n46} exists. It is unique because it is prescribed in degree zero and $E_S \Omega^{\bfdot} (\pi)$ is topologically dg-generated by elements in degree zero. The second assertion is a formal consequence of the first one.
\end{proof}

\begin{definition}
\label{tn45}
For a (commutative, unital) $R_0$-algebra $R$ we write $E_S \Omega^{\bfdot}_{R/R_0}$ for the $E_S (R_0)$-dga $E_S \Omega^{\bfdot} (\pi)$ where $(\pi)$ can be any free presentation as in \eqref{eq:n41}.
\end{definition}

This makes sense since by the corollary $E_S \Omega^{\bfdot} (\pi)$ is uniquely determined up to a unique topological isomorphism which is the identity in degree zero. It is immediate from the definitions that for $S = \{1 \}$ we have $E_S \Omega^{\bfdot} (\pi) = \Omega^{\bfdot}_{R / R_0}$.

Of course we could also have chosen a canonical free presentation to define $E_S \Omega^{\bfdot}_{R / R_0}$. For example we could have taken the following diagram with the obvious maps
\begin{equation}
\label{eq:n47}
\xymatrix{
0 \ar[r] & I_0 \ar[r] \ar@{^{(}->}[d] & \Z [R_0] \ar[r] \ar@{^{(}->}[d] & R_0 \ar[r] \ar[d] & 0 \\
0 \ar[r] & I \ar[r] & \Z [R_0] [R] \ar[r] & R \ar[r] & 0
}
\end{equation}
In the introduction, where $R_0$ was $\Z$ we used the free presentation
\[
\xymatrix{
0 \ar[r] & 0 \ar[r] \ar[d] & \Z \ar[r] \ar@{^{(}->}[d] & \Z \ar[r] \ar[d] & 0 \\
0 \ar[r] & I \ar[r] & \Z [R] \ar[r] & R \ar[r] & 0
}
\]
The extra flexibility in the choice of resolutions is very useful though. For example we have:

\begin{cor}
\label{tn46}
Assume that $R$ is a finitely generated $R_0$-algebra and let $d$ be the minimal number of generators. Then $E_S \Omega^i_{R / R_0} = 0$ for $i > d$.
\end{cor}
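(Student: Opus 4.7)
The plan is to exploit the freedom, highlighted just before the statement, to choose any free presentation when computing $E_S \Omega^\bfdot_{R/R_0}$. Pick generators $r_1,\dots,r_d$ of $R$ over $R_0$, choose any free presentation $0 \to I_0 \to A_0 \to R_0 \to 0$ with $A_0$ a polynomial ring over $\Z$ (for instance $A_0 = \Z[R_0]$), and then set $A = A_0[x_1,\dots,x_d]$, mapping $x_i$ to a lift of $r_i$ to obtain a surjection $\pi : A \to R$ with kernel $I$. Since $A$ is a polynomial ring over $A_0$ and $A_0$ is a polynomial ring over $\Z$, this is a free presentation $(\pi)$ of the $R_0$-algebra $R$ in the sense of Definition \ref{tn41}, and by Corollary \ref{tn44} (b) it computes $E_S\Omega^\bfdot_{R/R_0}$.

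Next I would compute $\Omega^\bfdot_{A/A_0}$. As $A = A_0[x_1,\dots,x_d]$ is a polynomial algebra in $d$ variables over $A_0$, the module $\Omega^1_{A/A_0}$ is free of rank $d$ over $A$ with basis $dx_1,\dots,dx_d$. Consequently
\[
\Omega^i_{A/A_0} = \Lambda^i \Omega^1_{A/A_0} = 0 \quad \text{for all } i > d,
\]
and the same vanishing holds after tensoring with $\Q$. Hence $(\Omega^\bfdot_{A/A_0} \otimes \Q)^S$ vanishes in degrees $>d$.

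By Definition \ref{tn42}, $E_S\Omega^\bfdot(\pi)$ is a subquotient of $(\Omega^\bfdot_{A/A_0}\otimes\Q)^S$, so it too vanishes in degrees $>d$. In view of Definition \ref{tn45}, this yields $E_S\Omega^i_{R/R_0}=0$ for $i>d$. There is no real obstacle here; the entire content is that the flexibility to choose an economical free presentation (with $A$ a polynomial ring in exactly $d$ variables over $A_0$) makes the vanishing visible at the level of the ambient dga before any completion or quotienting.
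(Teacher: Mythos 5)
Your proof is correct and follows exactly the paper's argument: choose a free presentation with $A = A_0[t_1,\dots,t_d]$, observe that $\Omega^i_{A/A_0} = 0$ for $i > d$, and conclude from the construction of $E_S\Omega^\bfdot_{R/R_0}$ as a subquotient of $(\Omega^\bfdot_{A/A_0}\otimes\Q)^S$. You simply spell out details (existence of the presentation, appeal to Corollary \ref{tn44}(b)) that the paper leaves implicit.
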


\begin{proof}
We can choose a free presentation \eqref{eq:n41} where $A = A_0 [t_1 , \ldots , t_d]$. Then $\Omega^i_{A / A_0} = 0$ for $i > d$ and the claim follows by the construction of $E_S \Omega^{\bfdot}_{R / R_0}$.
\end{proof}

Our next goal is to equip the family $(E_S \Omega^{\bfdot}_{R / R_0})$ for varying $S$ with Frobenius and Verschiebung maps and to determine the relations between $V_n , F_m, \dl$ and $\langle \rangle_S$. This is based on the following lemma whose proof consists of simple verifications. Consider a polynomial algebra $A$ over a commutative unital base ring $A_0$ without $\Z$-torsion. Let $(\Omega^S_{\Q} , \dl)$ be the dga defined in \eqref{eq:n421a} where $\Omega = \Omega^{\bfdot}_{A / A_0}$. The family $(\Omega^S_{\Q})$ comes with continuous maps $V_n , F_m$ satisfying the relations of Proposition \ref{t1}.

\begin{lemma}
\label{tn47}
For $n,m \in S$, setting $n' = n / (n,m)$ and $m' = m / (n,m)$ the following relations hold:\\
(1) $V_n \dl = n \, \dl V_n : \Omega^{S/n}_{\Q} \longrightarrow \Omega^S_{\Q}$\\
(2) $mF_m \dl = \dl F_m : \Omega^S_{\Q} \longrightarrow \Omega^{S/m}_{\Q}$\\
(3) $F_n \dl V_n = \dl : \Omega^{S/n}_{\Q} \longrightarrow \Omega^{S/n}_{\Q}$\\
(4) $m' F_m \dl V_n = \dl V_{n'} F_{m'} : \Omega^{S/n}_{\Q} \longrightarrow \Omega^{S/m}_{\Q}$\\
(5) $n' F_m \dl V_n = V_{n'} F_{m'} \dl : \Omega^{S/n}_{\Q} \longrightarrow \Omega^{S/m}_{\Q}$\\
(6) Choose $i,j \in \Z$ with $im'+ jn' = 1$. Then
\[
F_m \dl V_n = i\, \dl V_{n'} F_{m'} + j V_{n'} F_{m'} \dl : \Omega^{S/n}_{\Q} \longrightarrow \Omega^{S/m}_{\Q} \; .
\]
(7) $F_n \dl \langle a \rangle_S = \langle a^{n-1} \rangle_{S/n} \dl \langle a \rangle_{S/n}$ for $a \in A$ and $\langle a \rangle_S = (a^{\nu})_{\nu \in S} \in A^S \subset \Omega^S_{\Q}$\\
(8) $F_m \dl V_n \langle a \rangle_{S/n} = i\, \dl V_{n'} \langle a^{m'} \rangle_{S / nm'} + j V_{n'} \langle a^{m'-1} \rangle_{S /n'm} \dl \langle a \rangle_{S/n'm}$ for $a \in A$\\
(9) $V_n (\omega_0 \dl \omega_1 \cdots \dl \omega_r) = V_n (\omega_0) \dl V_n (\omega_1) \cdots \dl V_n (\omega_r)$ for $\omega_0 , \ldots , \omega_r \in \Omega^{S/n}_{\Q}$.
\end{lemma}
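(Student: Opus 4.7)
The plan is to verify all nine identities by direct coordinate-wise calculation in $\Omega^S_{\Q}$, then to leverage the earlier ones to obtain the later ones. Throughout I can work rationally because $\Omega^S_{\Q}$ is a $\Q$-module, and the commutation relation $F_m V_n = (n,m) V_{n'} F_{m'}$ from Proposition \ref{t1}(b) will be the main algebraic input.

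First I would prove (1)--(3) directly from the definitions. For $\omega \in \Omega^{S/n}_{\Q}$ and $\mu \in S$ one has $(V_n \dl \omega)_\mu = n \delta_{n \mid \mu}(\mu/n)^{-1} d\omega_{\mu/n}$ and $(\dl V_n \omega)_\mu = \mu^{-1} n \delta_{n \mid \mu}\, d\omega_{\mu/n}$, so comparing the two gives (1). Identity (2) is an analogous one-line check. For (3) one combines (1) with the identity $F_n V_n = n \cdot \id$ (itself coming from Proposition \ref{t1}(b) with $m=n$).

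Identities (4)--(6) are then formal consequences. From (1) we have $\dl V_n = n^{-1} V_n \dl$, so $F_m \dl V_n = n^{-1} F_m V_n \dl = n^{-1}(n,m) V_{n'} F_{m'} \dl = (n')^{-1} V_{n'} F_{m'} \dl$, which is (5). Dually, from (2) we have $F_m \dl = m^{-1} \dl F_m$, and the same commutation relation gives (4). For (6), since $(n',m') = 1$ we may pick $i,j$ with $im' + jn' = 1$, and then
\[
F_m \dl V_n = (im' + jn')\, F_m \dl V_n = i(m' F_m \dl V_n) + j(n' F_m \dl V_n) = i\, \dl V_{n'} F_{m'} + j V_{n'} F_{m'} \dl,
\]
using (4) and (5).

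Identity (7) is a direct computation: $\dl \langle a \rangle_S$ has $\nu$-th component $\nu^{-1} d(a^\nu) = a^{\nu-1} da$, so $F_n \dl \langle a\rangle_S$ has components $a^{n\nu-1} da$ for $\nu \in S/n$, which exactly matches $\langle a^{n-1}\rangle_{S/n} \dl \langle a \rangle_{S/n}$. For (8) I would substitute $\omega = \langle a \rangle_{S/n}$ into (6); using $F_{m'} \langle a \rangle_{S/n} = \langle a^{m'}\rangle_{S/nm'}$ (which is immediate from the definition of $F_{m'}$) for the first summand, and the short calculation $F_{m'} \dl \langle a \rangle_{S/n} = \langle a^{m'-1}\rangle_{S/n'm} \dl \langle a\rangle_{S/n'm}$ (both sides equal $(a^{m'\nu-1} da)_\nu$) for the second summand, identity (8) follows, recalling $nm' = n'm$. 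Finally, for (9) I would use that from Proposition \ref{t1}(c) with $m = n$ one gets $V_n(\alpha) V_n(\beta) = n V_n(\alpha \beta)$, so by induction $V_n(\omega_0) V_n(\dl\omega_1) \cdots V_n(\dl \omega_r) = n^r V_n(\omega_0 \dl\omega_1 \cdots \dl\omega_r)$; combining with $\dl V_n = n^{-1} V_n \dl$ from (1) applied $r$ times, the factors $n^r$ and $n^{-r}$ cancel and (9) follows.

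The only step that requires any care is the careful bookkeeping of indices in (8), since the truncation sets $S/nm'$ and $S/n'm$ must be identified (they agree because $nm' = n'm = [n,m]$, as $m'$ and $n'$ are the complementary factors of $m$ and $n$ relative to $(n,m)$). Everything else is bookkeeping of powers of $a$ and factors of $\nu, n, m$.
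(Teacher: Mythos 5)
Your proposal is correct and follows essentially the same route as the paper: direct coordinate checks for (1), (2), (7), with (3)--(5) deduced by combining (1) or (2) with $F_nV_n = n$ and $F_mV_n = (n,m)V_{n'}F_{m'}$, (6) as the B\'ezout combination of (4) and (5), (8) by substituting $\langle a\rangle_{S/n}$ into (6) and applying (7), and (9) by cancellation of $n$-factors between $V_n(\alpha)V_n(\beta) = nV_n(\alpha\beta)$ and $\dl V_n = n^{-1}V_n\dl$.
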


\begin{proof}
(1), (2) follow by calculating both sides explicitly.\\
(3) Proposition \ref{t1} and (1) give
\[
n \dl = F_n V_n \dl \overset{\text{(1)}}{=} n F_n \dl V_n \; .
\]
(4) $m F_m \dl V_n \overset{\text{(2)}}{=} \dl F_m V_n = (n,m) \dl V_{n'} F_{m'}$ by Proposition \ref{t1}.\\
(5) $n F_m \dl V_n \overset{\text{(1)}}{=} F_m V_n \dl = (n,m) V_{n'} F_{m'} \dl$ by Proposition \ref{t1}.\\
(6) follows from (4) and (5).\\
(7) $F_n \dl \langle a \rangle = F_n \dl (a^{\nu}) = F_n (a^{\nu-1} da) = (a^{\nu n-1} da)$. On the other hand
\[
\langle a^{n-1} \rangle \dl \langle a \rangle = ((a^{n-1})^{\nu}) (a^{\nu-1} da) = (a^{\nu n-1} da) \; .
\]
(8) follows from (6) and (7).\\
(9) Easy calculation. Even easier for $r = 1$, the general case following by induction.
\end{proof}

\begin{theorem}
\label{tn48}
For any presentation \eqref{eq:n41} of an $R_0$-algebra $R$ we have inclusions not only in $\Omega^S_{\Q}$ but in fact in $\Omega^S$
\[
X^{\bfdot}_S (I,A) \subset X^{\bfdot}_S (A) \subset \Omega^S \quad \text{where} \; \Omega = \Omega^{\bfdot}_{A / A_0} \; .
\]
For $n,m \in S$ the Frobenius and Verschiebung maps $F_m$ and $V_n$ for the family $(\Omega^S)$ respect these inclusions.
\end{theorem}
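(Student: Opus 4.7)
The plan is to identify a concrete set of generators for $X^{\bfdot}_S(A)$ and then verify integrality and compatibility with $F_m, V_n$ directly on those generators. I introduce the notion of a \emph{basic form}
$$V_{n_0}\langle a_0\rangle \cdot \dl V_{n_1}\langle a_1\rangle \cdots \dl V_{n_r}\langle a_r\rangle \qquad (r \ge 0,\; n_i \in S,\; a_i \in A),$$
and let $Z^{\bfdot}_S(A)$ be the closed additive subgroup of $\Omega^S_\Q$ spanned by them. Since $1 = V_1\langle 1\rangle \in X_S(A)$, the forms $\dl V_{n_1}\langle a_1\rangle \cdots \dl V_{n_r}\langle a_r\rangle$ also belong to $Z^{\bfdot}_S(A)$. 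The first step is to prove $Z^{\bfdot}_S(A) = X^{\bfdot}_S(A)$ by checking that $Z^{\bfdot}_S(A)$ is a closed, $\dl$-invariant subring containing $X_S(A)$: closure under $\dl$ is immediate from $\dl^2 = 0$; closure under multiplication is handled by first using graded commutativity to move all degree-zero factors to the leftmost slot and then using that $X_S(A)$ is itself a ring, with everything legitimized by the continuity of product and of $\dl$ on $\Omega^S_\Q$.

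Next I compute the $\nu$-component of a basic form explicitly. From $V_n\langle a\rangle = n(\delta_{n\mid\nu}\, a^{\nu/n})_\nu$ one gets $\dl V_n\langle a\rangle = (\delta_{n\mid\nu}\, a^{\nu/n-1}\, da)_\nu$, the crucial point being that the denominator $\nu^{-1}$ in the definition of $\dl$ cancels against the factor $n$ from $V_n$ together with the combinatorial $\nu/n$ produced by differentiating $a^{\nu/n}$. Thus the $\nu$-component of a basic form vanishes unless $[n_0,\ldots,n_r]$ divides $\nu$, and otherwise equals the integral $r$-form $n_0\, a_0^{\nu/n_0}\, a_1^{\nu/n_1 - 1}\, da_1 \cdots a_r^{\nu/n_r - 1}\, da_r \in \Omega^r$. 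This establishes $Z^{\bfdot}_S(A) \subset \Omega^S$ and hence $X^{\bfdot}_S(A) \subset \Omega^S$; the inclusion $X^{\bfdot}_S(I, A) \subset X^{\bfdot}_S(A)$ is part of the definition.

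For compatibility with $V_n$ and $F_m$ I would argue on basic forms and extend by continuity. For $V_n$, item (9) of Lemma \ref{tn47} gives directly that $V_n$ of a basic form is again a basic form, with each $V_{n_i}\langle a_i\rangle$ replaced by $V_{nn_i}\langle a_i\rangle$. For $F_m$, which is a componentwise ring endomorphism of $\Omega^S$, I distribute over the product and reduce to $F_m \dl V_n\langle a\rangle$; formula (8) of Lemma \ref{tn47} expresses this as an integer combination of two basic forms in $X^{\bfdot}_{S/m}(A)$, so combined with $F_m X_S(A) \subset X_{S/m}(A)$ from Proposition \ref{tx2} we obtain $F_m X^{\bfdot}_S(A) \subset X^{\bfdot}_{S/m}(A)$. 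The ideal statements follow the same pattern: if some $a_i$ belongs to $I$ then $V_n$ preserves this $I$-membership, and the two summands on the right of (8) each contain a factor sitting in $X_{S/m}(I)$ or $\dl X_{S/m}(I)$ as soon as $a \in I$.

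The main technical obstacle is the first step, namely verifying that the closed additive subgroup $Z^{\bfdot}_S(A)$ generated by basic forms really coincides with the closed dga generated by $X_S(A)$ in $\Omega^S_\Q$. This amounts to showing that products of basic forms reduce, via Leibniz and graded commutativity, to limits of finite linear combinations of basic forms, which is tractable thanks to the fact that $X_S(A)$ is itself a ring and that product and $\dl$ are continuous on $\Omega^S$. Once this is in place, integrality and the $F$/$V$-compatibilities reduce to the single computations (8) and (9) of Lemma \ref{tn47} together with the explicit formula for $\dl V_n\langle a\rangle$ above.
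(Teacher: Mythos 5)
Your proof is correct and follows essentially the same route as the paper's: identify the basic forms $V_{n_0}\langle a_0\rangle\,\dl V_{n_1}\langle a_1\rangle\cdots\dl V_{n_r}\langle a_r\rangle$ as a topological generating set for $X^{\bfdot}_S(A)$, verify integrality via the explicit formula $\dl V_n\langle a\rangle = (\delta_{n\mid\nu}\,a^{\nu/n-1}\,da)_\nu$, and deduce $F_m$- and $V_n$-stability from parts (8) and (9) of Lemma \ref{tn47} together with $F_m$ being a componentwise ring homomorphism. The only difference is presentational: the paper states the generator identification (equation \eqref{eq:n435}) without comment, whereas you correctly isolate it as the step requiring justification and supply the argument (closure under $\dl$ from $\dl^2=0$ and Leibniz, closure under multiplication from graded commutativity plus $X_S(A)$ being a ring, and continuity of $\dl$ and of the product to pass to limits).
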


\begin{proof}
As a topological (additive) subgroup of $\Omega^S_{\Q}$ the ring $X^{\bfdot}_S (A)$ is generated by elements of the form
\begin{equation}
\label{eq:n435}
\omega = V_{n_0} \langle a_0 \rangle \dl V_{n_1} \langle a_1 \rangle \cdots \dl V_{n_r} \langle a_r \rangle \quad \text{with} \; r \ge 0 , n_i \in S , a_i \in A \; .
\end{equation}
Similarly $X^{\bfdot}_S (I,A)$ is generated by $\omega$'s with at least one $a_i \in I$. The formula
\begin{equation}
\label{eq:n436}
\dl V_n \langle a \rangle = (\delta_{n \mid \nu} a^{\frac{\nu}{n} - 1} da)_{\nu \in S} \in \Omega^S \quad \text{with} \; a \in A
\end{equation}
now implies that $X^{\bfdot}_S (A) \subset \Omega^S$. Recall that $\delta_{n \mid \nu} = 1$ if $n \mid \nu$ and $\delta_{n \mid \nu} = 0$ if $n \nmid \nu$. The continuous Frobenius ring homomorphism $F_m : \Omega^S \to \Omega^{S/ m}$ maps $X_S (A)$ to $X_{S/m} (A)$. Moreover formula (8) of Lemma \ref{tn47} shows that $F_m \dl V_n \langle a \rangle \in X^{\bfdot} (A)$. Hence we have $F_m (X^{\bfdot}_S (A)) \subset X^{\bfdot}_{S/m} (A)$. The same argument implies that $F_m (X^{\bfdot}_S (I,A)) \subset X^{\bfdot}_{S / m} (I,A)$. The corresponding assertions for $V_n$ follow from formula (9) of Lemma \ref{tn47} since $V_n \verk V_m = V_{nm}$.
\end{proof}

From the theorem we get induced Frobenius and Verschiebung maps between the groups $E_S \Omega^{\bfdot} (\pi)$ where $(\pi)$ is any presentation of the $R_0$-algebra $R$. The maps $E_S \Omega^{\bfdot} (\pi) \to E_S \Omega^{\bfdot} (\pi')$ in Corollary \ref{tn44} are compatible with $F_m$ and $V_n$. This follows because they are induced by diagram \eqref{eq:n45}. Thus ``the'' dgas $E_S \Omega^{\bfdot}_{R / R_0}$ of Definition \ref{tn45} come equipped with Frobenius and Verschiebung maps. The next corollary is clear from the definitions.

\begin{cor}
\label{tn49}
The ring homomorphisms $F_m : E_S \Omega^{\bfdot}_{R / R_0} \to E_{S/m} \Omega^{\bfdot}_{R / R_0}$ for $m \in S$ and the additive maps $V_n : E_{S/n} \Omega^{\bfdot}_{R / R_0}\to E_S \Omega^{\bfdot}_{R / R_0}$ for $n \in S$ satisfy all the relations corresponding to those in Proposition \ref{t1} and Lemma \ref{tn47}.
\end{cor}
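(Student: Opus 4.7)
The plan is to pull everything back to a fixed free presentation $(\pi)$ of $R$ over $R_0$ and verify the relations on the explicit quotient $E_S \Omega^{\bfdot}(\pi) = X^{\bfdot}_S(A)/X^{\bfdot}_S(I,A)$, then appeal to Corollary \ref{tn44} b) to transport the conclusion to the invariantly defined $E_S \Omega^{\bfdot}_{R/R_0}$.

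First I would observe that Theorem \ref{tn48} already does the main work of defining $F_m$ and $V_n$ on the quotient: it guarantees that the Frobenius and Verschiebung maps on $\Omega^S_{\Q}$ preserve both $X^{\bfdot}_S(A)$ and the differential graded ideal $X^{\bfdot}_S(I,A)$. Consequently they induce well-defined continuous maps
\[
F_m : E_S \Omega^{\bfdot}(\pi) \longrightarrow E_{S/m} \Omega^{\bfdot}(\pi), \qquad V_n : E_{S/n} \Omega^{\bfdot}(\pi) \longrightarrow E_S \Omega^{\bfdot}(\pi),
\]
and $F_m$ remains a homomorphism of graded rings because it is one on $\Omega^S_{\Q}$.

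Next I would verify the relations. Every identity from Proposition \ref{t1} is a universally valid identity between the operators $F_m$, $V_n$ and the multiplication on the family $(\Omega^S_{\Q})$ (indeed on any family $A^S$ with $\Omega$ in place of $A$), and every identity from Lemma \ref{tn47} is likewise an identity in $(\Omega^S_{\Q}, \dl)$. Both sides of each such identity, when evaluated on an element of $X^{\bfdot}_S(A)$ (resp.\ $X^{\bfdot}_{S/n}(A)$), land back in the appropriate $X^{\bfdot}$-subring by Theorem \ref{tn48}, and the identity between the two sides persists after this restriction. The identity then descends to the quotient by $X^{\bfdot}_S(I,A)$ because the quotient map is a morphism of dg-rings intertwining all of $F_m$, $V_n$ and $\dl$. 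The only relation that might look delicate is Lemma \ref{tn47} (6), which is formulated with a Bézout pair $(i,j)$; but $i,j \in \Z$, so the formula is integral and transfers without change.

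Finally, independence of the presentation: Corollary \ref{tn44} b) furnishes, for any two free presentations $(\pi)$ and $(\pi')$, a unique topological dg-isomorphism $E_S \Omega^{\bfdot}(\pi) \cong E_S \Omega^{\bfdot}(\pi')$ that is the identity in degree zero. By Corollary \ref{tn44} a), applied to the identity map $R \to R$, this isomorphism commutes with the Frobenius and Verschiebung operators constructed above (the maps \eqref{eq:n46} are defined via diagram \eqref{eq:n45}, and the outer inclusions in that diagram manifestly intertwine $F_m$ and $V_n$). Thus $F_m$ and $V_n$ on $E_S \Omega^{\bfdot}_{R/R_0}$ are well defined, and all the desired relations hold on them. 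There is really no obstacle to overcome here; the content of the corollary is the book-keeping that Theorem \ref{tn48} has already arranged so that every identity one wishes to transfer from $(\Omega^S_{\Q}, \dl)$ stays inside the $X^{\bfdot}$-subrings and their ideals.
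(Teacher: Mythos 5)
Your proposal is correct and matches the paper's reasoning, which simply declares the corollary ``clear from the definitions'' after establishing (via Theorem \ref{tn48} and the discussion preceding Corollary \ref{tn49}) that $F_m$ and $V_n$ preserve $X^{\bfdot}_S(A)$ and $X^{\bfdot}_S(I,A)$ and that the comparison isomorphisms of Corollary \ref{tn44} intertwine them. You have merely spelled out the bookkeeping the paper leaves implicit, including the (correct) observation that every relation to be transferred is integral, so nothing is lost on passing from $\Omega^S_{\Q}$ to the $X^{\bfdot}$-subrings and their quotients.
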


There is a ``ghost'' map on $E_S \Omega^{\bfdot}_{R / R_0}$. Formulas \eqref{eq:n435} and \eqref{eq:n436} show that the image of $X^{\bfdot}_S (I,A)$ in $(\Omega^{\bfdot }_{A / A_0})^S$ has components in the differential graded ideal $J^{\bfdot}$ of $\Omega^{\bfdot}_{A /A_0}$ generated by $I$. Since $\Omega^{\bfdot}_{A / A_0} / J^{\bfdot} = \Omega^{\bfdot}_{R / R_0}$ we therefore obtain a homomorphism of graded rings
\begin{equation}
\label{eq:n437}
\Gh^{\bfdot}_S : E_S \Omega^{\bfdot}_{R / R_0} \longrightarrow (\Omega^{\bfdot}_{R / R_0})^S \; .
\end{equation}
Its degree zero component $\Gh^0_S = \Gh_S$ is the ghost map \eqref{eq:18}. By construction $\Gh^{\bfdot}_S$ is compatible with Frobenius and Verschiebung and we have
\begin{equation}
\label{eq:n438}
d \Gh^{\bfdot}_S (\omega) = (\nu)_{\nu \in S} \Gh^{\bfdot}_S (\dl \omega) \; .
\end{equation}
Here the differential on the left is taken componentwise.

The functor $S \mapsto E_S \Omega^{\bfdot}_{R / R_0}$ is continuous in the following sense:

\begin{proposition} \label{t410}
The natural map
\[
E_S \Omega^{\bfdot}_{R / R_0} \longrightarrow \varprojlim_{T \subset S} E_T \Omega^{\bfdot}_{R / R_0}
\]
is a topological isomorphism of $E_S (R_0) = \varprojlim_{T \subset S} E_T (R_0)$ dga's. Here $T$ runs over the directed poset of finite truncation subsets of $S$.
\end{proposition}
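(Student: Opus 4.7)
The plan is to imitate the proof of the analogous isomorphism $E_S(R) = \varprojlim_T E_T(R)$ in \eqref{eq:8}: fix a free presentation $(\pi)$ of $R$ over $R_0$, write each $E_T\Omega^{\bfdot}(\pi)$ as a quotient $X^{\bfdot}_T(A)/X^{\bfdot}_T(I,A)$, assemble these into a projective system of short exact sequences indexed by the countable directed poset of finite truncation subsets $T \subset S$, and apply Lemma \ref{t23} in each degree.

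The first step is to show that for $T \subset T' \subset S$ with $T$ finite, the coordinate projection $\Omega^{T'}_{\Q} \to \Omega^T_{\Q}$ restricts to a surjection $X^{\bfdot}_{T'}(A) \to X^{\bfdot}_T(A)$ of dg-rings and to a surjection $X^{\bfdot}_{T'}(I,A) \to X^{\bfdot}_T(I,A)$ of dg-ideals. Since $T$ is finite, $\Omega^T_{\Q}$ is discrete, so $X^{\bfdot}_T(A)$ is simply the dg-subring of $\Omega^T_{\Q}$ generated by $X_T(A)$, and is therefore additively spanned by products $x_0\,\dl x_1 \cdots \dl x_r$ with $x_i \in X_T(A)$. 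By Definition/Proposition \ref{tx2}(3) each such $x_i$ lifts to an $\tilde{x}_i \in X_{T'}(A)$, and the product $\tilde{x}_0\,\dl \tilde{x}_1 \cdots \dl \tilde{x}_r$ projects to $x_0\,\dl x_1 \cdots \dl x_r$ because coordinate projection commutes with $\dl$ and with multiplication. The ideal case is identical, with at least one $\tilde{x}_i$ chosen in $X_{T'}(I)$.

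The second step, and the main technical point, is to identify $X^{\bfdot}_S(A) = \varprojlim_T X^{\bfdot}_T(A)$ as topological subrings of $\Omega^S_{\Q} = \varprojlim_T \Omega^T_{\Q}$, and likewise for the ideal. The inclusion $X^{\bfdot}_S(A) \subset \varprojlim_T X^{\bfdot}_T(A)$ is clear from Step 1, and its image is dense because all transition maps in the projective system are surjective. On the other hand, $\varprojlim_T X^{\bfdot}_T(A) = \bigcap_T p_T^{-1}(X^{\bfdot}_T(A))$ is closed in $\Omega^S_{\Q}$ and carries the subspace topology, so density together with the fact that $X^{\bfdot}_S(A)$ is closed in $\Omega^S_{\Q}$ by definition forces equality as topological subrings. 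The same argument gives $X^{\bfdot}_S(I,A) = \varprojlim_T X^{\bfdot}_T(I,A)$.

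Combining these two steps, Lemma \ref{t23} applied degreewise to the exact sequences
$$0 \longrightarrow X^{\bfdot}_T(I,A) \longrightarrow X^{\bfdot}_T(A) \longrightarrow E_T\Omega^{\bfdot}(\pi) \longrightarrow 0$$
(whose kernel transition maps are surjective by Step 1) yields a topological isomorphism
$$E_S\Omega^{\bfdot}(\pi) = X^{\bfdot}_S(A)/X^{\bfdot}_S(I,A) \silo \varprojlim_T E_T\Omega^{\bfdot}(\pi),$$
and this map respects the dga and $E_S(R_0)$-module structures because all constructions are compatible with the coordinate projections. The main obstacle is the identification $X^{\bfdot}_S(A) = \varprojlim_T X^{\bfdot}_T(A)$, which requires bridging the topological closure in the definition of $X^{\bfdot}_S(A)$ with the purely algebraic description of $X^{\bfdot}_T(A)$ for finite $T$; the density-plus-closedness argument is what makes this work.
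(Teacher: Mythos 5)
Your proof is correct and follows essentially the same route as the paper: after fixing a free presentation, establish surjectivity of the projections $X^{\bfdot}_{T'}(A)\to X^{\bfdot}_T(A)$ and $X^{\bfdot}_{T'}(I,A)\to X^{\bfdot}_T(I,A)$ by lifting the explicit degree-zero generators, identify $X^{\bfdot}_S(A)$ and $X^{\bfdot}_S(I,A)$ with the projective limits via the density-plus-closedness argument of Proposition~\ref{tx2}(3), and conclude degreewise with Lemma~\ref{t23}. The only cosmetic difference is that the paper spans $X^{\bfdot}_T(A)$ by the forms $V_{n_0}\langle a_0\rangle\,\dl V_{n_1}\langle a_1\rangle\cdots\dl V_{n_r}\langle a_r\rangle$ of \eqref{eq:n435} rather than by products $x_0\,\dl x_1\cdots\dl x_r$ with $x_i\in X_T(A)$, but these descriptions are equivalent by linearity of $\dl$.
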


\begin{proof}
Choose a free resolution $(\pi)$ as in \eqref{eq:n41}. For finite $T \subset S$ the elements of $X^{\bfdot}_T (A)$ are finite $\Z$-linear combinations of forms $\omega$ as in \eqref{eq:n435}. It follows that the projection $X^{\bfdot}_S (A) \to X^{\bfdot}_T (A)$ is surjective. Similarly we see that $X^{\bfdot}_S (I,A) \to X^{\bfdot}_T (I,A)$ is surjective as well. The argument in the proof of Proposition \ref{tx2}, 3) shows that topologically
\[
X^{\bfdot}_S (A) = \varprojlim X^{\bfdot}_T (A) \quad \text{and} \quad X^{\bfdot}_S (I,A) = \varprojlim X^{\bfdot}_T (I,A) \; .
\]
Applying Lemma \ref{t23} to the exact sequences:
\[
0 \longrightarrow X^{\bfdot}_T (I,A) \longrightarrow X^{\bfdot}_T (A) \longrightarrow E_T \Omega^{\bfdot}_{R / R_0} \longrightarrow 0
\]
the assertion follows.
\end{proof}

For the comparison of $E_S \Omega^{\bfdot}_{R / R_0}$ with the relative de~Rham Witt complexes in \cite{Ch}, \cite{LZ} and \cite{I}, we need some information about $X^{\bfdot}_S (A)$ where $A$ is a polynomial algebra over a ring $A_0$ without $\Z$-torsion and $S$ is finite. Since $X^{\bfdot}_S (A)$ is an $X_S (A_0)$-dga with $X^0_S (A) = X_S (A)$, there is a unique map of $X_S (A_0)$-dga's which is the identity in degree zero:
\[
\alpha : \Omega^{\bfdot}_{X_S (A) / X_S (A_0)} \longrightarrow X^{\bfdot}_S (A) \; .
\]
The map $\alpha$ is surjective since $X^{\bfdot}_S (A)$ is dg-generated by elements in degree zero. By construction, $X^{\bfdot}_S (A)$ has no $\Z$-torsion, so that we get an induced surjective map
\[
\oalpha : \Omega^{\bfdot}_{X_S (A) / X_S (A_0)} / \Z\text{-torsion} \twoheadrightarrow X^{\bfdot}_S (A) \; .
\]

\begin{lemma}
\label{t411}
For finite $S$ the map $\oalpha$ is an isomorphism of $X_S (A_0)$-dga's.
\end{lemma}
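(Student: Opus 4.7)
My plan has three main steps. First, I note that $\alpha$ (and hence $\oalpha$) is surjective because $X^{\bfdot}_S(A)$ is dg-generated by $X_S(A)$, so only injectivity needs proof. Since $A$ is a polynomial algebra over the $\Z$-torsion-free ring $A_0$, the module $\Omega = \Omega^{\bfdot}_{A/A_0}$ is free over $A$, hence $\Z$-torsion free. By Theorem \ref{tn48}, $X^{\bfdot}_S(A)$ sits inside $\Omega^S$, so it too is $\Z$-torsion free. The source $\Omega^{\bfdot}_{X_S(A)/X_S(A_0)}/\Z\text{-torsion}$ is torsion-free by definition, so both sides embed into their rationalizations; it therefore suffices to show that $\alpha \otimes \Q$ is an isomorphism.

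For the second step I invoke Lemma \ref{tn13}: because $S$ is finite, the inclusion $X_S(A) \subset A^S$ becomes an isomorphism after tensoring with $\Q$, and the same holds for $A_0$. Hence $\Omega^{\bfdot}_{X_S(A)/X_S(A_0)} \otimes \Q$ is canonically identified with $\Omega^{\bfdot}_{(A \otimes \Q)^S / (A_0 \otimes \Q)^S}$. Since finite products of rings split the module of relative K\"ahler differentials into the corresponding product, this dga is naturally isomorphic to $\Omega^S_{\Q}$ equipped with the componentwise de~Rham differential, which I denote $d_{\mathrm{comp}}$.

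The third and decisive step is to compare the two differentials on $\Omega^S_{\Q}$, namely $\dl$ and $d_{\mathrm{comp}}$. I introduce the graded algebra automorphism $\phi$ of $\Omega^S_{\Q}$ defined componentwise by $\phi(\omega)_n = n^r \omega_n$ for $\omega \in (\Omega^r_{\Q})^S$. A direct check shows that $\phi$ is multiplicative and satisfies $d_{\mathrm{comp}} \circ \phi = \phi \circ \dl$, so $\phi$ is a dga isomorphism $(\Omega^S_{\Q}, \dl) \xrightarrow{\sim} (\Omega^S_{\Q}, d_{\mathrm{comp}})$ which is the identity in degree zero. The composite $\phi \circ (\alpha \otimes \Q)$ is therefore a dga endomorphism of $\Omega^{\bfdot}_{(A \otimes \Q)^S / (A_0 \otimes \Q)^S}$ that is the identity on the degree-zero part $(A \otimes \Q)^S$. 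By the universal property of K\"ahler differentials it must equal the identity, forcing $\alpha \otimes \Q = \phi^{-1}$ to be an isomorphism.

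The key conceptual step is the third one: isolating the rescaling automorphism $\phi$ that converts the formal differential $\dl$ (built from the combinatorics of truncation sets) into an ordinary componentwise de~Rham differential. Once $\phi$ is in hand, the rest reduces to standard facts about K\"ahler differentials, the statement of Lemma \ref{tn13}, and the observation that both sides are torsion-free. A minor technical point worth verifying carefully is the identification $\Omega^{\bfdot}_{A^S/A_0^S} \cong (\Omega^{\bfdot}_{A/A_0})^S$; this follows because an $A_0^S$-derivation kills the orthogonal idempotents $e_s \in A^S$ and hence decomposes uniquely into a tuple of $A_0$-derivations on the factors.
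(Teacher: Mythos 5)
Your proof is correct and takes essentially the same route as the paper's: both rely on Lemma \ref{tn13} to identify the rationalized source with $\Omega^{S}_{\Q}$ and both introduce the same rescaling automorphism (your $\phi$ is the inverse of the paper's $\varphi$) intertwining the componentwise de~Rham differential with $\dl$. The only difference is packaging: the paper argues injectivity via a commutative square whose commutativity is checked in degree zero, while you phrase it as showing that $\phi\circ(\alpha\otimes\Q)$ is a dga endomorphism fixing degree zero and hence equal to the identity.
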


\begin{proof}
By Lemma \ref{tn13}, for a ring $B$ the inclusion $X_S (B) \subset B^S$ becomes an isomorphism after tensoring with $\Q$. In particular, we have
\[
X_S (B) \otimes \Q = B^S \otimes \Q = (B \otimes \Q)^S\quad \text{for} \; B = A_0 , A \; .
\]
Thus the natural map
\[
\Omega^{\bfdot}_{X_S (A) / X_S (A_0)} \longrightarrow \Omega^{\bfdot}_{A^S / A^S_0} = (\Omega^{\bfdot}_{A / A_0})^S
\]
becomes an isomorphism after tensoring with $\Q$. In particular
\[
\Omega^{\bfdot}_{X_S (A) / X_S (A_0)} / \Z\text{-torsion} \hookrightarrow (\Omega^{\bfdot}_{A / A_0})^S
\]
is injective, noting that $\Omega^{\bfdot}_{A / A_0}$ has no $\Z$-torsion. We get a commutative diagram of dg-rings
\[
\xymatrix@C=0em{
\Omega^{\bfdot}_{X_S (A) / X_S (A_0)} / \Z\text{-torsion} \ar@{^{(}->}[rrr] \ar@{>>}[d]_{\bar{\alpha}} & & & ((\Omega^{\bfdot }_{A / A_0})^S, d^S) \subset & ((\Omega^{\bfdot }_{A / A_0})^S \otimes \Q , d^S) \ar[d]^{\wr \varphi} \\
X^{\bfdot}_S (A) \ar@{^{(}->}[rrrr] & & & & ((\Omega^{\bfdot }_{A / A_0})^S \otimes \Q , \dl) \; .
}
\]
Here $\varphi$ is the isomorphism of dg-rings given by the formula
\[
\varphi ((\omega_{\nu})_{\nu \in S}) = (\nu^{-\deg \omega_{\nu}} \omega_{\nu})_{\nu \in S}
\]
in case all $\omega_{\nu}$ are homogeneous forms. The commutativity of the diagram needs to be checked in degree zero only where it is trivial. It follows that $\oalpha$ is also injective and hence an isomorphism.
\end{proof}

\begin{rem}
The above diagram shows that for a polynomial algebra $A$ over a $\Z$-torsion free ring $A_0$ the dga $X^ {\bfdot}_S (A)$ is isomorphic as an $X_S (A_0)$-dga to the image of $\Omega^{\bfdot}_{X_S (A) / X_S (A_0)}$ in $(\Omega^{\bfdot}_{A^S / A^S_0} , d) = (\Omega^S_{A / A_0} , d^S)$ under the ghost map $X_S (A) \subset A^S$. In this description, Frobenius $\Fh_m$ and Verschiebung $\Vh_n$ are given by the restrictions of $\Fh_m = \varphi^{-1} \verk F_m \verk \varphi$ and $\Vh_n = \varphi^{-1} \verk V_n \verk \varphi$. Explicitly we have the formulas
\[
\Fh_m : \Omega^{\bfdot S}_{A / A_0} \otimes \Q \longrightarrow \Omega^{\bfdot S/m}_{A / A_0} \otimes \Q \; \text{with} \; \Fh_m ((\omega_{\nu})_{\nu \in S}) = (n^{-\deg \omega_{\nu n}} \omega_{\nu n})_{\nu \in S / n}
\]
and 
\[
\Vh_n : \Omega^{\bfdot S/n}_{A / A_0} \otimes \Q \longrightarrow \Omega^S_{A / A_0} \otimes \Q \; \text{with} \; \Vh_n ((\omega_{\nu})_{\nu \in S/n}) = n (n^{\deg \omega_{\nu / n}} \delta_{n \mid \nu} \omega_{\nu / n})_{\nu \in S} \; .
\]
Here the components $\omega_{\nu}$ are supposed to be homogenous. We learned from Jim Borger that this point of view is known to experts. Geometrically our approach via free presentations corresponds to using closed embeddings into smooth schemes. As we saw, for the definition of $X^{\bfdot} (A)$ one either needs to introduce denominators in the definition of Frobenius or in the definition of the differential. In the latter case the formulas are simpler which is the reason why we chose that option. We refer to section 2 of the arXiv v2 version of \cite{Ch} where a related description of the de~Rham Witt complex for $\lambda$-rings is given. 
\end{rem}

For an $R_0$-algebra $R$, the functor $S \mapsto E_S \Omega^{\bfdot}_{R / R_0}$ is a Witt complex over $R$ with $W (R_0)$-linear differential in the sense of \cite{H} \S\,4 and \cite{Ch} section 1.2. This follows from Corollary \ref{tn49} and Proposition \ref{t410}. The relative de~Rham Witt complex $S \mapsto W_S \Omega^{\bfdot}_{R / R_0}$ defined in \cite{Ch} section 1.2 is an initial object in this category. Hence there is a unique morphism $\lambda = (\lambda_S)$ of Witt complexes with
\[
\lambda_S : W_S \Omega^{\bfdot}_{R / R_0} \longrightarrow E_S \Omega^{\bfdot}_{R / R_0} \; .
\]
In degree zero, $\lambda_S$ is given by the identification $W_S (R) \equiv E_S (R)$. 

\begin{theorem}
\label{t3012}
For every $R_0$-algebra $R$ the morphism $\lambda = (\lambda_S)$ of Witt complexes is an isomorphism.
\end{theorem}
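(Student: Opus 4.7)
The plan is to show $\lambda_S$ is both surjective and injective, reducing first to the case of finite $S$. Proposition \ref{t410} gives $E_S\Omega^{\bfdot}_{R/R_0}=\varprojlim_T E_T\Omega^{\bfdot}_{R/R_0}$ (over finite truncation subsets $T$ of $S$) with surjective transition maps; the analogous continuity for Chatzistamatiou's complex is built into its definition as a Witt complex with $W(R_0)$-linear differential, and $\lambda=(\lambda_S)$ is compatible with both inverse limits. Thus it suffices to treat finite $S$. For surjectivity, choose a free presentation $(\pi)$ as in Definition \ref{tn41}. By Definition \ref{tn42}, the dga $E_S\Omega^{\bfdot}(\pi)=X^{\bfdot}_S(A)/X^{\bfdot}_S(I,A)$ is generated as a dg-algebra over $E_S(R_0)$ by the image of $X_S(A)$, which is additively generated by the elements $V_n\langle r\rangle$, $n\in S$, $r\in R$. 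Since $\lambda_S$ is a morphism of Witt complexes — hence compatible with $V_n$ and $\dl$ — and is the identification $W_S(R)\equiv E_S(R)$ in degree zero, its image already contains all these dg-generators, so $\lambda_S$ is surjective.

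For injectivity I would first treat the polynomial case $R=A$, $R_0=A_0$ (with $A$ polynomial over $A_0$ and $A_0$ polynomial over $\Z$, hence in particular $\Z$-torsion free). Here $I=0$, so $E_S\Omega^{\bfdot}_{A/A_0}=X^{\bfdot}_S(A)$, and by Theorem \ref{tn48} the ghost map $\Gh^{\bfdot}_S$ of \eqref{eq:n437} is simply the inclusion $X^{\bfdot}_S(A)\subset (\Omega^{\bfdot}_{A/A_0})^S$. The external input needed — the basic Chatzistamatiou/Langer--Zink result referred to in the introduction — is that for such a polynomial $A$ over a $\Z$-torsion free base, the ghost map $W_S\Omega^{\bfdot}_{A/A_0}\to (\Omega^{\bfdot}_{A/A_0})^S$ is injective. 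Both ghost maps are graded ring morphisms compatible with Frobenius and Verschiebung that coincide in degree zero with the classical Witt-vector ghost map; by the standard uniqueness of such a map (elements like $V_n \dl V_{n_1}\langle a_1\rangle \cdots \dl V_{n_r}\langle a_r\rangle$ generate the source Witt-complex-theoretically from degree zero), the Langer--Zink ghost map factors as $W_S\Omega^{\bfdot}_{A/A_0}\xrightarrow{\lambda_S^A} X^{\bfdot}_S(A)\hookrightarrow (\Omega^{\bfdot}_{A/A_0})^S$. Injectivity of the composition then forces $\lambda_S^A$ to be injective, so combined with surjectivity it is an isomorphism.

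To handle a general $R=A/I$ via the chosen free presentation, the universal property of the initial Witt complex identifies $W_S\Omega^{\bfdot}_{R/R_0}$ with the quotient $W_S\Omega^{\bfdot}_{A/A_0}/J$, where $J$ is the closed dg-ideal generated by the Teichm\"uller lifts $\{V_n[a]:a\in I,\ n\in S\}$. Indeed, $W_S\Omega^{\bfdot}_{A/A_0}/J$ is itself a Witt complex over $R$ (the Teichm\"uller character factors through $R$), and conversely every Witt complex over $R$, pulled back along $\pi:A\to R$, annihilates these elements, giving the two-sided factorization. Under the isomorphism $\lambda_S^A:W_S\Omega^{\bfdot}_{A/A_0}\silo X^{\bfdot}_S(A)$ from the previous step, $J$ is mapped precisely onto $X^{\bfdot}_S(I,A)$, since both are defined as the closed dg-ideals generated by the same elements $V_n\langle a\rangle$, $a\in I$, $n\in S$. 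Passing to quotients yields the desired isomorphism $\lambda_S:W_S\Omega^{\bfdot}_{R/R_0}\silo E_S\Omega^{\bfdot}_{R/R_0}$. The sole substantial obstacle is the Langer--Zink injectivity used in the polynomial case; everything else is formal from the universal property and our explicit description via Theorem \ref{tn48}.
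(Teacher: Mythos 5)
Your proposal is correct in spirit but follows a genuinely different route from the paper's, so let me compare the two. The paper does not argue injectivity and surjectivity separately; instead it constructs an explicit inverse $\mu_S$ by feeding $X^{\bfdot}_S(A)\cong \Omega^{\bfdot}_{X_S(A)/X_S(A_0)}/\Z\text{-torsion}$ (Lemma~\ref{t411}) into the canonical map $\Omega^{\bfdot}_{W_S(A)/W_S(A_0)}\to W_S\Omega^{\bfdot}_{A/A_0}\to W_S\Omega^{\bfdot}_{R/R_0}$, using Chatzistamatiou's Proposition~1.2.17 (that $W_S\Omega^{\bfdot}_{A/A_0}$ is $\Z$-torsion free, resting on Langer--Zink) to see the first arrow kills torsion. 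The two composites $\mu_S\circ\lambda_S$ and $\lambda_S\circ\mu_S$ are then identities by the degree-zero/dg-generation argument you also use for surjectivity. Your injectivity argument instead factors the ghost map through $\lambda_S^A$ in the polynomial case (same external input from \cite{Ch}/\cite{LZ}, repackaged as injectivity of the ghost map) and then descends to general $R$ by identifying $W_S\Omega^{\bfdot}_{R/R_0}$ with the quotient $W_S\Omega^{\bfdot}_{A/A_0}/J$.

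Two steps of your argument need more care than you give them. First, in the ghost-map factoring you say the two maps agree because both are ``graded ring morphisms compatible with Frobenius and Verschiebung'' agreeing in degree zero; but $W_S\Omega^{\bfdot}_{A/A_0}$ is generated from degree zero using $F_n$, $V_n$ \emph{and} the differential, so you must also verify that the Langer--Zink ghost map satisfies the same twisted compatibility with $d$ that $\Gh^{\bfdot}_S$ does (cf.\ \eqref{eq:n438}); otherwise the uniqueness argument does not pin the maps down. Second, and more substantively, the identification $W_S\Omega^{\bfdot}_{R/R_0}\cong W_S\Omega^{\bfdot}_{A/A_0}/J$ is a nontrivial structural claim about Chatzistamatiou's complex: you must show the quotient is actually a Witt complex over the $R_0$-algebra $R$, which means checking that $J$ (as a system over all truncation sets) is preserved by $F_n$, $V_n$ and $d$, that the degree-zero piece of $J$ is exactly $X_S(I)$ so the quotient begins in $W_S(R)$, and that the differential becomes $W_S(R_0)$-linear on the quotient. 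These are the Witt-complex analogues of what the paper's Theorem~\ref{tn48} proves for $X^{\bfdot}_S(I,A)$, and you cannot simply import them via $\lambda_S^A$ before knowing the quotient is a Witt complex in the first place (though once the polynomial-case isomorphism is in hand this can be untangled). The paper's construction of $\mu_S$ sidesteps all of this by working only with the already-existing morphism $W_S\Omega^{\bfdot}_{A/A_0}\to W_S\Omega^{\bfdot}_{R/R_0}$ supplied by functoriality. Net: your route is viable and uses the same deep input, but it trades the paper's one clean lemma (\ref{t411}) plus torsion-freeness for a larger amount of structural bookkeeping that you only sketch.
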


\begin{proof}
We may assume that $S$ is finite. It suffices to construct maps of differential graded rings
\[
\mu_S : E_S \Omega^{\bfdot}_{R / R_0} \longrightarrow W_S \Omega^{\bfdot}_{R / R_0}
\]
which are the identifications $E_S (R) \equiv W_S (R)$ in degree zero. Then both $\mu_S \verk \lambda_S$ and $\lambda_S \verk \mu_S$ are self maps of dg-rings with degree zero components being the identities. Since both $W_S \Omega^{\bfdot}_{R / R_0}$ and $E_S \Omega^{\bfdot}_{R / R_0}$ are generated as differential rings by degree zero elements it will follow that $\mu_S \verk \lambda_S = \id$ and $\lambda_S \verk \mu_S = \id$. Choose a presentation \eqref{eq:n41} of the $R_0$-algebra $R$ and consider the composition of canonical dg-ring maps
\[
\Omega^{\bfdot}_{X_S (A) / X_S (A_0)} \equiv \Omega^{\bfdot}_{W_S (A) / W_S (A_0)} \longrightarrow W_S \Omega^{\bfdot}_{A / A_0} \; .
\]
Here we have used Corollary \ref{t6} and the discussion following it to identify $X_S (B)$ with $W_S (B)$ for $B = A_0 , A$. Using Proposition 1.2.17 of \cite{Ch} which extends to (spectra of) polynomial algebras $A$ in arbitrary many variables by an inductive limit argument, it follows that $W_S \Omega^{\bfdot}_{A / A_0}$ has no $\Z$-torsion. Note that the proof of \cite{Ch} Proposition 1.2.17 rests on the work of Langer and Zink, c.f. \cite{LZ} Corollary 2.18. Using Lemma \ref{t411} we therefore obtain maps of dg-rings
\[
X^{\bfdot}_S (A) \overset{\oalpha^{-1}}{\silo} \Omega^{\bfdot}_{X_S (A) / X_S (A_0)} / \Z\text{-torsion} \longrightarrow W^{\bfdot}_S \Omega_{A / A_0} \longrightarrow W_S \Omega^{\bfdot}_{R / R_0} \; .
\]
Since the composition $X_S (I) \longrightarrow W_S (A) \to W_S (R)$ is zero, it follows that $X^{\bfdot}_S (I,A)$ is mapped to zero in $W_S \Omega^{\bfdot}_{R / R_0}$. Hence we get the desired homomorphism of dg-rings:
\[
\mu_S : E_S \Omega^{\bfdot}_{R / R_0} = X^{\bfdot}_S (A) / X^{\bfdot}_S (I,A) \longrightarrow W_S \Omega^{\bfdot}_{R / R_0} \; .
\]
\end{proof}

\begin{rem}
For a $\Z_{(p)}$-algebra resp. an $\F_p$-algebra $R_0$, Chatzistamatiou's complex \\
$W_n \Omega^{\bfdot}_{R / R_0} := W_{S_n} \Omega^{\bfdot}_{R / R_0}$ for $S_n = \{ 1 , \ldots , p^{n-1} \}$ reduces to the de~Rham Witt complexes of Langer, Zink \cite{LZ} resp. Bloch, Deligne, Illusie \cite{I}. See \cite{Ch} for the details. Thus $E_n \Omega^{\bfdot}_{R / R_0} := E_{S_n} \Omega^{\bfdot}_{R / R_0}$ provides simple descriptions of these complexes as well.
\end{rem}

It is interesting to make the differential $\dl$ explicit in the power series description of big Witt vectors. The relevant truncation set is $S = \N$ and we write $E \Omega^{\bfdot}$ for $E_S \Omega^{\bfdot}$ etc. For a presentation \eqref{eq:n41} write as before $\Omega = \Omega^{\bfdot}_{A / A_0}$ and $\Omega_{\Q} = \Omega^{\bfdot} \otimes \Q$. We may identify $\Omega^{\N}$ with $t\Omega \llbracket t \rrbracket$ equipped with the Hadamard product $\ast$ of coefficientwise multiplication. Similarly $\Omega^{\N}_{\Q} \equiv t \Omega_{\Q} \llbracket t \rrbracket$. Let $d_A$ denote the differential of $\Omega^{\bfdot}_{A / A_0}$. For $\omega = (\omega_{\nu})_{\nu \in \N} \in \Omega^{\N}_{\Q}$ we have by definition
\[
\dl \omega = (\nu^{-1} d_A \omega_{\nu})_{\nu \in \N} \in \Omega^{\N}_{\Q} \; .
\]
Identifying $\omega$ with a power series in $t \Omega_{\Q} \llbracket t \rrbracket$
\[
\omega = \sum^{\infty}_{\nu = 1} \omega_{\nu} t^{\nu}
\]
we may formally write
\[
\dl \omega = \int^t_0 d_A \omega \frac{dt}{t} \; .
\]
With our identifications, the power series in $X (A) \subset t\Omega \llbracket t \rrbracket$ are those of the form
\[
f_Q = t \partial_t \log Q \quad \text{for} \; Q \in 1 + tA \llbracket t \rrbracket \; .
\]
This follows from part b) of Proposition \ref{t2}. We have
\[
\dl f_Q = d_A \int^t_0 \partial_t \log Q \, dt = d_A \log Q = Q^{-1} d_A Q \quad \text{in} \;  t \Omega \llbracket t \rrbracket \; .
\]
In particular it follows again that $\dl X (A) \subset \Omega^{\N} = t \Omega \llbracket t \rrbracket$. Thus $X^{\bfdot} (A)$ is the closed subring of $(t \Omega \llbracket t \rrbracket , \ast)$ generated by elements of the form $f_P = t\, P^{-1} \partial_t P$ and $\dl f_Q = Q^{-1} d_A Q$ for $P,Q$ in $1 + tA \llbracket t \rrbracket$.

\begin{rem}
Unlike $\Omega^{\bfdot}_{\Z [R] / \Z}$ the ring $\Omega^{\bfdot}_{\Z R / \Z}$ can have $\Z$-torsion. Here is an example. Consider the $\F_p$-algebra $R = \F_p [u] / (u^p)$. The relation $[u]^p = 1$ in $\Z R$ implies that $p [u]^{p-1} d[u] = 0$. Multiplying with $[u]$ gives $p\, d [u] = 0$. On the other hand the image $du$ of $d [u]$ under the map $\Omega^1_{\Z R / \Z} \to \Omega^1_{R/\Z}$ is non-zero and hence $d [u]$ is a non-zero torsion element in $\Omega^1_{\Z R / \Z}$.
\end{rem}

We finish this section with some remarks on the overconvergent theory. Consider a finitely generated $R_0$-algebra $R$ with generators $r_1 , \ldots , r_d$. For $0 \neq r \in R$ define $\deg r$ to be the minimal degree among all polynomials $P \in R_0 [t_1 , \ldots , t_d]$ with $r = P (r_1 , \ldots , r_d)$. We set $\deg 0 = - \infty$. For two sequences $(a_n) , (b_n)$ we write $a_n \ll b_n$ for $n \to \infty$ if there is a constant $C > 0$ with $a_n \le C b_n$ for all sufficiently large $n$. Set
\[
W^{\dagger}_S (R) = \{ \bfr \in R^S \mid \deg r_n \ll n \log n \; \text{for} \; n \to \infty \; , \; n \in S \} \; .
\]
This subset of $W_S (R)$ is independent of the choice of generators used to define $\deg$. It was introduced in \cite{DLZ1} for $S = \{ 1 , p , p^2 , \ldots \}$ and base fields $R_0$ of characteristic $p$ and it was noted that $W^{\dagger}_S (R)$ is actually a subring of $W_S (R)$ respected by $F_p$ and $V_p$. In our approach we obtain $W^{\dagger}_S (R)$ as follows. Consider a presentation $(\pi)$ as in \eqref{eq:n41} where $A_0$ is only assumed to have no $S$-torsion and where $A$ is a finitely generated polynomial algebra over $A_0$ equipped with its usual degree map. Set
\[
X^{\dagger}_S (A) := \{ \bfx \in X_S (A) \mid \deg x_n \ll n \log n \; \text{for} \; n \to \infty \; , \; n \in S \}
\]
and $X^{\dagger}_S (I,A) := X_S (I) \cap X^{\dagger}_S (A)$. Then $X^{\dagger}_S (I, A)$ is an ideal in the ring $X^{\dagger}_S (A)$ and we define a subring of $E_S (R, \pi)$ by setting
\[
E^{\dagger}_S (R , \pi) := X^{\dagger}_S (A) / X^{\dagger}_S (I, A) \; .
\]
Note that in contrast to $E_S (R, \pi)$ the ring $E^{\dagger}_S (R, \pi)$ depends on the base ring $R_0$. Note that $E^{\dagger}_S (R, \pi)$ is an $E_S (R_0)$-algebra. In the natural sense it is invariant under $F_n , V_n$ for $n \in S$ and the Teichm\"uller map takes values in $E^{\dagger}_S (R, \pi)$. With these structures the rings $E^{\dagger}_S (R, \pi)$ for different presentations $\pi$ are isomorphic up to unique isomorphisms. Namely simple estimates show that the bijections
\[
\psi_S : W_S (R) = R^S \silo E_S (R ,\pi) \; , \; \psi_S (\bfr) = \sum_{n \in S } V_n \langle r_n \rangle
\]
restrict to bijections
\[
\psi^{\dagger}_S : W^{\dagger}_S (R) \silo E^{\dagger}_S (R , \pi) \; .
\]
If one views $\psi_S$ as a ring homomorphism, it follows that $W^{\dagger}_S (R)$ is a subring of $W_S (R)$ and $\psi^{\dagger}_S$ is a ring isomorphism. One can generalize the definition of the rings $E^{\dagger}_S (R)$ by considering more general growth functions $\tau : S \to (0,\infty]$ which satisfy the following conditions:\\
i) $\tau (n) \ge \varepsilon > 0$ for all $n \in S$\\
ii) $\tau (n) \le \tau (nm) \le c (m) \tau (n)$ for any $m \in S$ and all $n \in S / m$. \\
Here $c (m)$ is a constant depending on $m$. Then
\[
X^{\tau}_S (A) := \{ \bfx \in X_S (A) \mid \deg x_n \ll n \tau (n) \; \text{for} \; n \to \infty \; , \; n \in S \}
\]
is a subring of $X_S (A)$ and proceeding as before one obtains a subring $E^{\tau}_S (R)$ of $E_S (R)$ which is respected by $\langle \rangle_S$ and $F_n , V_n$ for $n \in S$. For $\tau \equiv \infty$ one obtains $E_S (R)$. The map $\psi_S$ restricts to a bijection of $E^{\tau}_S (R)$ with
\[
W^{\tau}_S (R) := \{ \bfr \in R^S \mid \deg r_n \ll n \tau (n) \; \text{for} \; n \to \infty \; , \; n \in S \} \; .
\]
The de~Rham Witt theory has a corresponding generalization. Consider the $dg$-ring $\Omega^{S, \tau}_{\Q}$ which consists of all elements $\uomega = (\omega_n)_{n \in S}$ in $\Omega^S_{\Q}$ where $\Omega_{\Q} = \Omega^{\bfdot}_{A / A_0} \otimes \Q$ such that $\deg \omega_n \ll n \tau (n)$. Here $A_0$ is supposed to be a polynomial algebra over $\Z$ and $A$ is a polynomial algebra over $A_0$. Set
\[
\deg \sum P_{i_1 \ldots i_r} dt_{i_1} \wedge \ldots \wedge dt_{i_r} := \max (\deg P_{i_1 \ldots i_r} + i_1 + \ldots + i_r) \; .
\]
Consider $X^{\bfdot \tau}_S (A)$, the $dg$-graded subalgebra of $\Omega^{S,\tau}_{\Q}$ generated by $X^{\tau}_S (A)$ and the $dg$-ideal $X^{\bfdot \tau}_S (I,A)$ in $X^{\bfdot \tau}_S (A)$ generated by $X^{\tau}_S (I,A)$. Both lie in $(\Omega^{\bfdot}_{A / A_0})^S \cap \Omega^{S , \tau}_{\Q}$ and
\[
E^{\tau}_S \Omega^{\bfdot} (\pi) := X^{\bfdot \tau}_S (A) / X^{\bfdot \tau}_S (I, A) 
\]
is an $E_S (R_0)$-dga with Frobenius and Verschiebung maps. As before there are unique isomorphisms between these $E_S (R_0)$-dga's for free presentations $(\pi)$ and $(\pi')$ that are the identifications $E^{\tau}_S (R, \pi) \equiv E^{\tau}_S (R , \pi')$ in degree zero. For $\tau \equiv \infty$ we get $E_S \Omega^{\bfdot} (\pi)$ back. It would be interesting to compare $E^{\tau}_S \Omega^{\bfdot} (\pi)$ for $\tau (n) = \log n$ and $S = \{ 1 , p , p^2 , \ldots \}$ to the overconvergent de~Rham Witt complex wich was introduced in \cite{DLZ2}.
\section{Frobenius lifts} \label{sec:4}
From this section on we return to rings $R$ that are commutative and possibly non-unital. We explain how to prove the well known lifting property of the Frobenius map and the Cartier--Dieudonn\'e lemma in our context and discuss the latter for $E_S (\Z R)$. The Frobenius lifting is proved for completeness only The Cartier--Dieudonn\'e lemma will be needed in section \ref{sec:6}.

For every $p \in S$ with $S = S/p$ the Frobenius endomorphism $F_p$ of $E_S (R)$ lifts the $p$-th power Frobenius endomorphism of the $\F_p$-algebra $E_S (R) / p E_S (R)$. In fact there is a more precise result.

\begin{proposition}\label{t9n}
For any $p \in S$ there is a unique family of maps $\delta_p : E_S (R) \to E_{S/p} (R)$ which is functorial in rings $R$ such that we have
\begin{equation}
\label{eq:11a}
F_p (c) = \tc^p + p \delta_p (c) \quad \text{for} \; c \in E_S(R) \; .
\end{equation}
Here $\tc$ is the image of $c$ under the projection $E_S (R) \to E_{S/p} (R)$. An explicit formula for $\delta_p (c)$ is given in \eqref{eq:11d} below. In particular
\[
F_p (c) \equiv \tc^p \mod pE_{S/p} (R) \; .
\]
\end{proposition}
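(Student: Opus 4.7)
The plan is to define $\delta_p$ at the level of a torsion-free presentation and then descend, with uniqueness extracted via functoriality. I would choose the presentation $0 \to I \to A \to R \to 0$ with $A = \Z R$, so that $E_S(R) = X_S(A)/X_S(I)$ sits inside a $\Z$-torsion-free ambient space $A^S$. For $x \in X_S(A) \subset A^S$ the goal is to define $\delta_p^A(x) \in A^{S/p}$ by dividing $F_p(x) - \pi_p(x)^p$ by $p$ componentwise, where $\pi_p : A^S \to A^{S/p}$ is the truncation map. The first task is then to check that this division is legal, i.e.~that each component $(F_p(x) - \pi_p(x)^p)_m = x_{pm} - x_m^p$ lies in $pA$.

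This componentwise divisibility is the classical Witt congruence. By Proposition \ref{t2}, since $A$ is torsion-free we may write $x = \Psi_S(\bfa)$ for a unique $\bfa \in A^S$, so that $x_m = \sum_{n \mid m} n a_n^{m/n}$ and
\[
x_{pm} - x_m^p \;=\; \sum_{n \mid pm} n a_n^{pm/n} - \Bigl( \sum_{n \mid m} n a_n^{m/n} \Bigr)^{\!p}.
\]
Reducing modulo $p$, the $p$-th power on the right expands via Frobenius to $\sum_{n \mid m} n^p a_n^{pm/n}$, which matches the first sum on the indices $n \mid m$ by Fermat's little theorem, while the remaining terms of the first sum (those with $n \mid pm$ but $n \nmid m$) carry a factor of $n$ with $p \mid n$ and so vanish mod $p$. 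Hence $\delta_p^A(x) := (F_p(x) - \pi_p(x)^p)/p$ is a well-defined element of $A^{S/p}$ with $p\delta_p^A(x) = F_p(x) - \pi_p(x)^p$, and the explicit formula \eqref{eq:11d} of the statement is to be read off from this calculation.

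The main obstacle is to show that $\delta_p^A(x)$ actually lies in $X_{S/p}(A)$ and not merely in $A^{S/p}$; this is strictly stronger, because $X_{S/p}(A)$ is not a $p$-pure subgroup of $A^{S/p}$ in general. My approach is to compute $\delta_p^A$ explicitly on the topological generators $V_n \langle a \rangle$ of $X_S(A)$ using the commutation rule $F_p V_n = (n,p)\, V_{n/(n,p)} F_{p/(n,p)}$ from Proposition \ref{t1}; the resulting expression is a $\Z$-combination of Verschiebung applied to Teichm\"uller elements, hence manifestly in $X_{S/p}(A)$, and the identity extends by additivity and continuity to all of $X_S(A)$. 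The same explicit formula also shows $\delta_p^A(X_S(I)) \subset X_{S/p}(I)$ (the ideal property is visible in the $V_n \langle a \rangle$ generators), so $\delta_p^A$ descends to a continuous map $\delta_p : E_S(R) \to E_{S/p}(R)$. Finally, for uniqueness and functoriality: every ring surjects from $\tilde R := \Z R$, which is $\Z$-torsion-free, and by Corollary \ref{t28nn} this induces a surjection $E_S(\tilde R) \twoheadrightarrow E_S(R)$, so any functorial family $\delta_p$ is determined by its values on torsion-free rings; for such $\tilde R$, Corollary \ref{t6} gives $E_{S/p}(\tilde R) = X_{S/p}(\tilde R) \subset \tilde R^{S/p}$, which is $p$-torsion-free, so the relation $p\delta_p(c) = F_p(c) - \tilde c^p$ pins $\delta_p(c)$ down uniquely.
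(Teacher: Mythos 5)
Your overall strategy — define $\delta_p$ at the level of the torsion-free $X_S(\Z R)\subset(\Z R)^S$ by the rule $\delta_p^A(x)=(F_p(x)-\pi_p(x)^p)/p$, then descend — is close in spirit to what the paper does, and your uniqueness argument (via the surjection $E_S(\Z R)\twoheadrightarrow E_S(R)$ and $p$-torsion-freeness of $X_{S/p}(\Z R)$) matches the paper's. The componentwise divisibility you prove via the ghost expansion is correct.

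However, there is a genuine gap in the step where you pass from the generators $V_n\langle a\rangle$ to all of $X_S(A)$ ``by additivity and continuity.'' The map $\delta_p^A$ is \emph{not} additive: from
\[
\delta_p^A(x+y)=\delta_p^A(x)+\delta_p^A(y)-\sum_{i=1}^{p-1}\tfrac1p\binom{p}{i}\,\pi_p(x)^i\pi_p(y)^{p-i}\,,
\]
you see cross terms that must be shown to lie in $X_{S/p}(A)$ (or in $X_{S/p}(I)$, for the descent) before the conclusion follows. So knowing $\delta_p^A$ on generators does not, by itself, propagate the membership $\delta_p^A(x)\in X_{S/p}(A)$ to arbitrary $x\in X_S(A)$, and a fortiori it does not establish the implication ``$x-y\in X_S(I)\Rightarrow\delta_p^A(x)-\delta_p^A(y)\in X_{S/p}(I)$,'' which is what you actually need to descend a nonlinear map to the quotient — your parenthetical ``$\delta_p^A(X_S(I))\subset X_{S/p}(I)$'' is not the right statement for a non-additive map. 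The cross-term formula above does rescue the argument (the correction terms lie in $X_{S/p}(A)$ because $\pi_p$ lands in the subring $X_{S/p}(A)$, and in $X_{S/p}(I)$ when one of $x,y$ is in $X_S(I)$), but it must be invoked explicitly. This is precisely what the paper does: it writes every $c\in E_S(R)$ uniquely as $\sum_n V_n\langle r_n\rangle$ and defines $\delta_p(c)$ by the closed formula \eqref{eq:11d}, whose second sum over orbits of the cyclic permutation $\tau$ is exactly the bookkeeping of these multinomial cross terms; the paper then verifies \eqref{eq:11a} directly. So your route can be made to work, but as written the ``additivity'' step is incorrect and hides the main combinatorial content of the proof.
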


\begin{proof}
The ring $E_{S/p} (\Z R) = X_{S/p} (\Z R) \subset (\Z R)^{S/p}$ has no $\Z$-torsion. Hence formula \eqref{eq:11a} determines $\delta_p$ uniquely on $E_S (\Z R)$. Since $E_S (R)$ is a quotient of $E_S (\Z R) = X_S (\Z R)$ the uniqueness assertion follows. By Theorem \ref{jc} and formula \eqref{j6}, every element $c$ of $E_S (R)$ is uniquely of the form
\[
c = \sum_{n \in S} V_n \langle r_n \rangle \quad \text{with} \; r_n \in R \; .
\]
We first define $\delta_p$ on the elements $V_n \langle r \rangle$ for $r \in R$. For $p \nmid n$ Proposition \ref{t1} and the relation $F_p \langle r \rangle = \langle r^p \rangle$ give the formula
\[
F_p V_n \langle r \rangle - (V_n \langle r \rangle )^p = (1 - n^{p-1}) F_pV_n \langle r \rangle \; .
\]
Hence we must define:
\begin{equation}
\label{eq:11b}
\delta_p (V_n \langle r \rangle ) = p^{-1} (1 - n^{p-1}) F_p V_n \langle r \rangle \; .
\end{equation}
For $p \mid n$ a short calculation, again using Proposition \ref{t1} shows that
\[
F_p V_n \langle r \rangle - (V_n \langle r \rangle)^p = p b - p^{p-1} V_p (b^p) \quad \text{where} \; b = V_{n/p} \langle r \rangle \; .
\]
Hence we have to set
\begin{equation}
\label{eq:11c}
\delta_p (V_n \langle r \rangle) = b - p^{p-2} V_p (b^p)\; .
\end{equation}
Let $\Delta$ be the diagonal in $S^p$. Then the cyclic permutation $\tau = (2 , \ldots , p , 1)$ acts without fixed points on $S^p \ohne \Delta$. Setting $\alpha_n = V_n \langle r_n \rangle$ and using \eqref{eq:11b} and \eqref{eq:11c} we have:
\begin{align*}
F_p \Big( \sum_{n \in S} \alpha_n \Big) - \Big( \sum_{n \in S} \alpha_n \Big)^p & = \sum_{n \in S} (F_p (\alpha_n) - \alpha^p_n) - \sum_{\bfn \in S^p \setminus \Delta} \alpha_{n_1} \cdots \alpha_{n_p} \\
& = p \sum_{n \in S} \delta_p (\alpha_n) - p \sum_{\bfn \in (S^p \ohne \Delta) / \tau} \alpha_{n_1} \cdots \alpha_{n_p} \; .
\end{align*}
Defining $\delta_p (c)$ by the following formula we therefore get a solution to equation \eqref{eq:11a} which is functorial in $R$
\begin{equation}
\label{eq:11d}
\delta_p (c) = \sum_{n\in S} \delta_p (\alpha_n) - \sum_{\bfn \in (S^p \ohne \Delta) / \tau} \alpha_{n_1} \cdots \alpha_{n_p} \; .
\end{equation}
\end{proof}

The following result determines the image of the ghost map $\Gh_S : E_S (R) \to R^S$ for a special class of rings. 

\begin{proposition}
\label{t11}
Let $A$ be a ring which for each prime $p \in S$ is equipped with a ring endomorphism $\phi_p$ lifting Frobenius i.e. with $\phi_p (a) \equiv a^p \mod p A$. Then we have
\[
\Gh_S (E_S (A)) = \{ (x_n) \in A^S \mid \phi_p (x_{n/p}) \equiv x_n \mod p^{\nu_p (n)} A \quad \text{for all} \; p \mid n \} \; .
\]
\end{proposition}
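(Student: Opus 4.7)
The plan is to transport the assertion across the isomorphism $\psi_S : W_S(A) \xrightarrow{\sim} E_S(A)$ of Theorem \ref{jc}. By diagram \eqref{eq:19} the composition $\Gh_S \circ \psi_S$ becomes the classical Witt ghost map $w_S((a_n)_{n \in S}) = \big(\sum_{m \mid n} m a_m^{n/m}\big)_{n \in S}$, so the statement reduces to the classical Dwork--Cartier--Dieudonn\'e characterization of $w_S(W_S(A))$. The whole argument rests on one arithmetic input: if $u, v \in A$ satisfy $u \equiv v \pmod{p^a A}$ with $a \ge 1$, then $u^p \equiv v^p \pmod{p^{a+1}A}$ (binomial expansion of $(v + p^a w)^p$). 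Iterating on $\phi_p(b) \equiv b^p \pmod{pA}$ gives
\[ \phi_p(b)^{p^i} \equiv b^{p^{i+1}} \pmod{p^{i+1}A} \quad \text{for all } b \in A,\ i \ge 0, \]
which is used in both directions below.

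For the inclusion ``$\subseteq$'', fix $p \mid n$, set $k = \nu_p(n)$, write $n = p^k n'$ with $p \nmid n'$, and split divisors $m$ of $n$ into those dividing $n/p$ (i.e.\ $\nu_p(m) < k$) and those of the form $p^k m'$ with $m' \mid n'$; the latter contribute to $x_n := w_n(\bfa)$ a sum carrying an explicit factor $p^k$. For $m = p^j m' \mid n/p$ with $j \le k-1$, the iterated congruence applied with $i = k-1-j$ and then raised to the $(n'/m')$-th power yields $\phi_p(a_m)^{(n/p)/m} \equiv a_m^{n/m} \pmod{p^{k-j}A}$, and multiplication by $m = p^j m'$ brings the discrepancy into $p^k A$. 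Summing over $m \mid n/p$ and combining with the $p^k$-divisible piece gives the Dwork congruence $x_n \equiv \phi_p(x_{n/p}) \pmod{p^k A}$.

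For the inclusion ``$\supseteq$'', construct a preimage $\bfa = (a_n)$ inductively along the divisibility order of $S$: set $a_1 = x_1$ and, assuming $a_m$ has been produced for every $m \| n$ with $w_m(\bfa) = x_m$, one must find $a_n \in A$ with $n a_n = r_n := x_n - \sum_{m \| n} m a_m^{n/m}$. Reversing the necessity calculation shows that for every prime $p \mid n$ with $k = \nu_p(n)$ we have $\sum_{m \| n} m a_m^{n/m} \equiv \phi_p(x_{n/p}) \pmod{p^k A}$: the summands with $p^k \mid m$ obviously lie in $p^k A$, and the remaining ones (those $m \mid n/p$) match the terms of $\phi_p(x_{n/p}) = \sum_{m \mid n/p} m \phi_p(a_m)^{(n/p)/m}$ modulo $p^k A$ by the iterated lemma. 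The Dwork hypothesis then gives $r_n \in p^{\nu_p(n)}A$ for every $p \mid n$. A short B\'ezout argument in $\Z$ (using pairwise coprimality of the $p^{\nu_p(n)}$ for $p \mid n$) upgrades this to $r_n \in \bigcap_{p \mid n} p^{\nu_p(n)}A = nA$, producing some $a_n \in A$ with $n a_n = r_n$ and completing the induction.

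The argument involves no conceptual difficulty beyond bookkeeping of divisors and $p$-adic valuations. The one subtlety is that when $A$ has $n$-torsion, $a_n$ is not uniquely determined --- matching the failure of $\Gh_S$ to be injective outside the $S$-torsion-free situation of Corollary \ref{t6} --- but existence is all that the statement asks for. The sharper Frobenius-lift relation of Proposition \ref{t9n} is not needed, since the ghost map is defined functorially from the Witt coordinates rather than through $F_p$.
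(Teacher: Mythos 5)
Your proof is correct, but it takes a genuinely different route from the paper: the paper's entire proof of Proposition \ref{t11} is the single sentence ``This is a restatement of Dwork's Lemma 1.1 from \cite{H}. The proof in \cite{H} applies to non-unital rings as well.'' You instead supply the full self-contained argument. After transporting the claim across $\psi_S$ (correctly invoking diagram \eqref{eq:19}), you reproduce the standard proof of Dwork's lemma: the binomial estimate $u \equiv v \pmod{p^aA} \Rightarrow u^p \equiv v^p \pmod{p^{a+1}A}$ iterated to $\phi_p(b)^{p^i} \equiv b^{p^{i+1}} \pmod{p^{i+1}A}$; for ``$\subseteq$'', the split of divisors $m \mid n$ into those with $p^k \mid m$ and those dividing $n/p$; and for ``$\supseteq$'', the induction along the divisibility poset, with the B\'ezout step $\bigcap_{p \mid n} p^{\nu_p(n)}A = nA$ producing $a_n$. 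All the congruence bookkeeping checks out, and your remark that only existence of the preimage is required (so the non-injectivity of $\Gh_S$ in the presence of $S$-torsion is harmless) is the right caveat. What the paper's approach buys is brevity and a pointer to a reference that handles non-unital rings; what your approach buys is that the argument is visible and self-contained, which is in keeping with the paper's stated intention to be elementary, and it makes explicit exactly which arithmetic input (the $p$-th-power lifting of congruences) drives the result.
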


\begin{proof}
This is a restatement of Dwork's Lemma 1.1 from \cite{H}. The proof in \cite{H} applies to non-unital rings as well.
\end{proof}

\begin{examp}
For any ring $R$, the ring $ \Z R$ is equipped with commuting Frobenius lifts $\phi_p$ for all prime numbers $p$ defined by setting $\phi_p [r] = [r^p]$.
\end{examp}

We can now state a Cartier--Dieudonn\'e lemma as in \cite{I} (1.3.16) in our context. c.f. \cite{H} p. 13.

\begin{proposition}
\label{t12}
Let $A$ be a ring without $S$-torsion which is equipped with commuting Frobenius lifts $\phi_p$ for the primes $p \in S$. For $n \in S$ set $\phi_n = \prod_p \phi^{\nu_p (n)}_p$. Then the map $f_S : A \to A^S , f_S (a) = (\phi_n (a))_{n \in S}$ defines a ring homomorphism $f_S : A \to X_S (A) = E_S (A)$ which is functorial in $A$. For all $n \in S$ the diagram
\begin{equation}
\label{eq:28}
\xymatrix{
A \ar[r]^-{f_{S}} \ar[d]_{\phi_n} & E_{S} (A) \ar[d]^{F_n} \\
A \ar[r]^-{f_{S/n}} & E_{S/n} (A)
}
\end{equation}
commutes.
\end{proposition}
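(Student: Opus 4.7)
The plan is to reduce everything to one application of Proposition \ref{t11}, exploiting the fact that $A$ has no $S$-torsion. By Corollary \ref{t6} this hypothesis gives the identification $E_S(A) = X_S(A) \subset A^S$, under which the ghost map $\mathrm{Gh}_S$ is simply the inclusion. Thus it suffices to prove that for every $a \in A$ the tuple $f_S(a) = (\phi_n(a))_{n \in S}$ belongs to $X_S(A)$; the remaining assertions will follow essentially formally.

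First I would verify the containment $f_S(a) \in X_S(A)$. By Proposition \ref{t11} this reduces to checking the Dwork-type congruences
\[
\phi_p(\phi_{n/p}(a)) \equiv \phi_n(a) \mod p^{\nu_p(n)} A \quad \text{for every } p \mid n.
\]
But since the Frobenius lifts $\phi_p$ commute and $\phi_n$ was defined as $\prod_p \phi_p^{\nu_p(n)}$, we actually have the exact identity $\phi_p \circ \phi_{n/p} = \phi_n$, so the congruence holds trivially with equality. This is where the commutativity hypothesis on the $\phi_p$ is used, and it is the only point where one might have expected difficulty.

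Next, $f_S \colon A \to A^S$ is a ring homomorphism componentwise because each $\phi_n$ is a ring endomorphism of $A$, and by the previous step its image lies in $E_S(A) = X_S(A)$. Functoriality in $A$ (in the category of rings with commuting Frobenius lifts) is likewise a componentwise verification: any ring map $\alpha \colon A \to B$ with $\alpha \circ \phi_p = \phi_p \circ \alpha$ for all $p \in S$ automatically satisfies $\alpha \circ \phi_n = \phi_n \circ \alpha$ for all $n \in S$, so the square involving $f_S^A$ and $f_S^B$ commutes.

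Finally, to establish commutativity of diagram \eqref{eq:28} I would compute the $m$-th component of $F_n \circ f_S(a)$ for $m \in S/n$: it equals $\phi_{mn}(a) = \phi_m(\phi_n(a))$, which is exactly the $m$-th component of $f_{S/n}(\phi_n(a))$. The main (and essentially only) non-formal ingredient is the appeal to Proposition \ref{t11}, and the commutativity of the $\phi_p$ renders even that step trivial, so there is no real obstacle to overcome.
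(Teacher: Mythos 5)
Your proposal is correct and follows essentially the same route as the paper: identify $E_S(A)$ with $X_S(A)\subset A^S$ via Corollary~\ref{t6}, then invoke Proposition~\ref{t11} (Dwork's lemma) to see that $(\phi_n(a))_{n\in S}$ lies in $X_S(A)$ because the required congruences $\phi_p(\phi_{n/p}(a))\equiv\phi_n(a)\bmod p^{\nu_p(n)}A$ hold as exact equalities thanks to the commutativity of the $\phi_p$. The paper's proof is terser, leaving the ring-homomorphism, functoriality and diagram checks as ``the assertion follows,'' which you spell out componentwise; the substance is identical.
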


\begin{proof}
By Corollary \ref{t6} we have $X_S (A) = E_S (A)$ and hence the ghost map $\Gh_S : E_S (A) \to A^S$ is injective. The family $(x_n) = (\phi_n (a)) \in A^S$ satisfies the equations $\phi_p (x_{n/p}) = x_n$ for all $p \mid n$. Hence the assertion follows from Proposition \ref{t11}. 
\end{proof}

\begin{examp}
For an arbitrary ring $R$ the ring homomorphism
\[
f_S : \Z R \longrightarrow E_S (\Z R)
\]
of the preceding proposition is given by the formula
\begin{equation}
\label{eq:29}
f_S \Big( {\te \sum} n_r [r] \Big) = {\te \sum} n_r \langle [r] \rangle_S \; .
\end{equation}
Note the difference to the non-linear Teichm\"uller character for $\Z R$
\[
\langle \rangle_S \Big( {\te \sum} n_r [r] \Big) = \langle {\te \sum} n_r [r] \rangle_S
\]
which also splits the residue map $E_S (\Z R) \to \Z R$. The following diagram commutes
\begin{equation}
\label{eq:30}
\xymatrix{
\Z R \ar[r]^{f_S} \ar[dr]_{\alpha_S} & E_S (\Z R) \ar[d]^{E_S (\pi)} \\
 & E_S (R) \; .
 }
\end{equation}
Here $\alpha_S$ is the unique ring homomorphism extending the multiplicative Teichm\"uller map $\langle \rangle_S : R \to E_S (R)$ to $\Z R$. 
\end{examp}

\begin{cor}
\label{t13}
For any ring $A$ as in Proposition \ref{t12}, the map
\[
\Phi_S : A^S \longrightarrow E_S (A) \; , \; \Phi_S (\bfa) = \sum_{n \in S} V_n (f_{S/n} (a_n))
\]
is an isomorphism of abelian groups.
\end{cor}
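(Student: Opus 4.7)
The plan is to prove additivity first and then bijectivity, the latter by reducing to finite truncation sets and inducting on cardinality via the short exact sequence of Corollary~\ref{t27n}.

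\textbf{Additivity and reduction to finite $S$.} Since each $f_{S/n}$ is a ring homomorphism by Proposition~\ref{t12} (hence additive) and each $V_n$ is additive, the unconditionally convergent sum $\Phi_S(\bfa)=\sum_{n\in S} V_n(f_{S/n}(a_n))$ defines a continuous additive map $A^S\to E_S(A)$. For finite truncation subsets $T\subset S$, the isomorphism $E_S(A)=\varprojlim_T E_T(A)$ of \eqref{eq:8} together with $A^S=\varprojlim_T A^T$ reduces the problem to finite $S$, once one checks that $\Phi_S=\varprojlim_T\Phi_T$. This compatibility follows from Proposition~\ref{t1}~d): for $n\in T$ the projection $A^{S/n}\to A^{T/n}$ sends $f_{S/n}(a_n)=(\phi_\nu(a_n))_{\nu\in S/n}$ to $f_{T/n}(a_n)$ and commutes with $V_n$, while for $n\in S\setminus T$ the composite $A^{S/n}\xrightarrow{V_n}A^S\to A^T$ is zero.

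\textbf{Induction on $|S|$ for finite $S$.} The base case $S=\{1\}$ is immediate since $\Phi_{\{1\}}(a)=V_1(a)=a$. For the inductive step choose an element $m\in S$ that is maximal under divisibility; then $T:=S\setminus\{m\}=\{k\in S:m\nmid k\}$ is a truncation set and $S/m=\{1\}$. By Corollary~\ref{t27n} we have a short exact sequence of abelian groups
\[
0\longrightarrow A=E_{\{1\}}(A)\xrightarrow{V_m} E_S(A)\longrightarrow E_T(A)\longrightarrow 0,
\]
which fits into the commutative diagram
\[
\xymatrix{
0\ar[r] & A \ar[r]\ar[d]_{\id} & A^S\ar[r]\ar[d]_{\Phi_S} & A^T\ar[r]\ar[d]_{\Phi_T} & 0\\
0\ar[r] & A\ar[r]_-{V_m} & E_S(A)\ar[r] & E_T(A)\ar[r] & 0
}
\]
whose top row inserts $a$ at the $m$-th coordinate and projects off the $m$-th coordinate. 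The left square commutes since $\Phi_S$ of the $m$-th basis element equals $V_m(f_{\{1\}}(a))=V_m(a)$, and the right square commutes by the two observations on $V_n$ and projections used already to reduce to finite $S$. By the inductive hypothesis $\Phi_T$ is an isomorphism, so the five lemma gives that $\Phi_S$ is an isomorphism as well.

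\textbf{Main obstacle.} The only delicate point is verifying the commutativity of the right square, which amounts to checking that under the projection $E_S(A)\to E_T(A)$ the contribution $V_m(a_m)$ in $\Phi_S(\bfa)$ vanishes (since $m\notin T$) while each contribution $V_n f_{S/n}(a_n)$ for $n\in T$ maps exactly to $V_n f_{T/n}(a_n)$. Both are pure consequences of Proposition~\ref{t1}~d) and the componentwise description of $f_{S/n}$, but they must be invoked carefully for the five-lemma step to apply. No use of Dwork's lemma (Proposition~\ref{t11}) is needed for this argument; it would be needed if one instead attempted a direct triangular inversion of $\Phi_S$ via the ghost map.
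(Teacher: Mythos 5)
Your proof is correct, but it takes a genuinely different route from the paper's. The paper argues directly for arbitrary $S$: it observes that $\Phi_S(\bfa)_m = \sum_{n \mid m} n\,\phi_{m/n}(a_n)$, deduces injectivity by the same triangular induction on $\nu(m)$ (number of prime factors) used for $\Psi_S$ in Proposition~\ref{t2}\,b), and proves surjectivity by invoking Dwork's Lemma (Proposition~\ref{t11}) to characterize the image $X_S(A) = E_S(A) \subset A^S$ via the congruences $\phi_p(x_{m/p}) \equiv x_m \mod p^{\nu_p(m)}A$, and then solves the system $\sum_{n\mid m} n\,\phi_{m/n}(a_n) = x_m$ inductively for $\bfa$. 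Your approach instead reduces to finite $S$ via \eqref{eq:8}, and then inducts on $|S|$ by peeling off a maximal element $m$ through the short exact sequence of Corollary~\ref{t27n} and the five lemma. Both are sound. What each buys: the paper's argument is explicit and self-contained, giving a constructive inverse without appealing to the sequence of Corollary~\ref{t27n} or to a reduction to finite truncation sets; yours trades the Dwork-lemma congruence arithmetic for a structural induction that leans on the already-established exactness of $0 \to E_{S/n} \xrightarrow{V_n} E_S \to E_T \to 0$, and is perhaps more transparent about where the triangular structure is really coming from (namely, from the filtration of $E_S$ by $V$-images). Your closing remark --- that Dwork's lemma would be what one needs for a direct triangular inversion via the ghost map --- is an accurate description of the paper's actual strategy, which you had not seen.

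One small point worth flagging: the reduction to finite $S$ tacitly uses that $\varprojlim$ of isomorphisms of abelian groups is an isomorphism, and that the compatibility $\Phi_S = \varprojlim_T \Phi_T$ holds. You verify the latter carefully, and it does follow from Proposition~\ref{t1}\,d) exactly as you say. No gap.
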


\begin{proof}
The map $\Phi_S$ is $\Z$-linear and $\Phi_S (\bfa)_m = \sum_{n \mid m} n \phi_{m/n} (a_n)$. The same inductive argument as the one for $\Psi_S$ in the proof of Proposition \ref{t2}, b) shows that $\Phi_S$ is injective. \\
By Proposition \ref{t11} any element $\bfx \in X_S (A) \subset A^S$ satisfies the congruences $\phi_p (x_{m/p}) \equiv x_m \mod p^{\nu_p (m)} A$ for all $p \mid m$. We must show that for some $\bfa \in A^S$ we have
\begin{equation}
\label{eq:31} 
\sum_{n \mid m} n \phi_{m/n} (a_n) = x_m \quad \text{for all} \; m \in S \; .
\end{equation}
For $m = 1$ we take $a_1 = x_1$. Fix $i \ge 0$ and assume that elements $a_n \in A$ have been found for all $n \in S$ with $\nu (n) \le i$ such that \eqref{eq:31} holds for all $m \in S$ with $\nu (m) \le i$. Let $\nu (m) = i+1$. Then
\begin{align*}
x_m & \equiv \phi_p (x_{m/p}) \mod p^{\nu_p (m)} A \quad \text{for} \; p \mid m \\
& \equiv \sum_{n \mid \frac{m}{p}} n \phi_{m/n} (a_n) \mod p^{\nu_p (m)} A \quad \text{by the induction hypotheses.}
\end{align*}
Hence
\[
x_m \equiv \sum_{n \| m } n \phi_{m/n} (a_n) \mod p^{\nu_p (m)} A \quad \text{for} \; p \mid m
\]
and therefore
\begin{equation} \label{eq:34}
x_m \equiv \sum_{n \| m} n \phi_{m/n} (a_n) \mod m A \; .
\end{equation}
Hence there is an element $a_m \in A$ which upgrades the congruence \eqref{eq:34} to an equality \eqref{eq:31}.
\end{proof}

\begin{rem}
In particular the non-linear map $\Psi_S : (\Z R)^S \to (\Z R)^S$ of \eqref{eq:6a} and the $\Z$-linear map $\Phi_S : (\Z R)^S \to (\Z R)^S$ have the same image $X_S (\Z R)$. However neither of the images $\Psi_S (I^S)$ and $\Phi_S (I^S)$ is contained in the other in general, even if $R$ is an $\F_p$-algebra and $S = \{ 1 , p , p^2 , \ldots \}$, so that $\phi_p$ respects $I$. The $\Z$-linear composition
\[
\varphi_S : (\Z R)^S \xrightarrow{\overset{\Phi_S}{\sim}} X_S (\Z R) \twoheadrightarrow E_S (R)
\]
is given by $\varphi_S (\bfa) = \sum_{n \in S} V_n (\alpha_S (a_n))$ where $\alpha_S : \Z R \to E_S (R)$ is the unique extension of the multiplicative Teichm\"uller map $\langle \rangle_S : R \to E_S (R)$ to a ring homomorphism on $\Z R$. This leads to a new presentation of $E_S (R)$ because it is easy to define a multiplication on $(\Z R)^S$ so that $\varphi_S$ becomes a ring homomorphism. However the kernel $\Ker \varphi_S = \Phi^{-1}_S (X_S (I))$ does not seem to have a simple description in general. For example, if $R$ is an $\F_2$-algebra and $S = \{ 1,2,4 \}$ then
\[
\Ker \varphi_S = \{ (a_1 , a_2 , a_4) \in (\Z R)^S \mid a_1 \in I \; , \; \delta (a_1) + a_2 \in I \; , \; \delta (\delta (a_1) + a_2) + a_4 + a^2_2 \in I \} \; .
\]
Here $\delta (a) = 2^{-1} (\phi_2 (a) - a^2)$ for $a \in \Z R$.
\end{rem}

\section{The case of $\F_p$-algebras} \label{sec:5}
We have $F_n V_n = n$ but in general $V_n F_n \neq n$. However the following is true:

\begin{proposition}
\label{t9}
Let $R$ be an $\F_p$-algebra with Frobenius endomorphism $\phi_p$ and let $S$ be any truncation set with $p \in S$.\\
a) We have $V_p F_p = p$ on $E_S (R)$.\\
b) There is a commutative diagram
\[
\xymatrix{
E_S (R) \ar[rr]^{F_p} \ar[dr] & & E_{S/p} (R) \\
& E_{S/p} (R) \ar[ur]_{E_{S/p} (\phi_p)} & 
}
\] 
In particular if $S = S / p$ we have $F_p = E_S (\phi_p)$ on $E_S (R)$.
\end{proposition}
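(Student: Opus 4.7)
The plan is to reduce part (a) to the single identity $V_p(1) = p \cdot 1$ in $E_S(\F_p)$, establish that identity directly, and then derive part (b) from (a) by a case check on the topological generators $V_n\langle r\rangle$. The main arithmetic input throughout is Fermat's little theorem.

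\medskip

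\emph{Part (a).} For unital $R$, the projection formula in Proposition \ref{t1}(c) applied with $y = 1 \in E_{S/p}(R)$ gives
\[
V_p F_p(x) = V_p(F_p(x) \cdot 1) = x \cdot V_p(1),
\]
so $V_p F_p = p$ is equivalent to $V_p(1) = p \cdot 1$ in $E_S(R)$. The non-unital case follows by embedding $R$ as an ideal in a unital $\F_p$-algebra (for instance $R \oplus \F_p$) and using Corollary \ref{t28nn}. By functoriality of $E_S$ along the structure map $\F_p \to R$, one further reduces to proving $V_p(1) = p \cdot 1$ in $E_S(\F_p) = X_S(\Z)/X_S(p\Z)$. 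Inspecting components, the representative $\eta := p \cdot 1 - V_p(1) \in X_S(\Z)$ satisfies $\eta_\nu = p$ for $p \nmid \nu$ and $\eta_\nu = 0$ for $p \mid \nu$. Writing $\eta = \psi_S(\bfa)$ via the bijection of Proposition \ref{t2}(b), the claim reduces to showing $\bfa \in (p\Z)^S$. This I would prove by induction on the number of prime factors of $m$, using the recursion $m a_m = \eta_m - \sum_{n \| m} n a_n^{m/n}$: when $p \nmid m$, Fermat's little theorem together with the inductive hypothesis $a_n \in p\Z$ forces $m a_m \in p\Z$, and since $p \nmid m$ we conclude $a_m \in p\Z$; when $p \mid m$, a short $p$-adic valuation analysis of the recursive sum yields the same conclusion.

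\medskip

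\emph{Part (b).} Both $F_p$ and $E_{S/p}(\phi_p) \circ \pi$ (with $\pi$ the projection) are continuous additive maps $E_S(R) \to E_{S/p}(R)$, so by Theorem \ref{jc} it suffices to check the equality on the generators $V_n\langle r\rangle$ for $n \in S$, $r \in R$. Formula \eqref{eq:3} gives
\[
F_p V_n\langle r\rangle = \begin{cases} V_n\langle r^p\rangle & \text{if } p \nmid n, \\ p V_{n/p}\langle r\rangle & \text{if } p \mid n, \end{cases}
\]
(understood as $0$ in $E_{S/p}(R)$ whenever the $V_n$-target lies outside $S/p$), while $E_{S/p}(\phi_p)\,\pi(V_n\langle r\rangle)$ equals $V_n\langle r^p\rangle$ for $n \in S/p$ and $0$ otherwise. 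The cases with $p \nmid n$ match immediately. For $n = pm$ the remaining task is $p V_m\langle r\rangle = V_{pm}\langle r^p\rangle$ when $pm \in S/p$ and $p V_m\langle r\rangle = 0$ when $pm \notin S/p$. Both follow from (a) applied inside a smaller truncation set: by additivity of $V_m$ one has $p V_m\langle r\rangle = V_m(p\langle r\rangle_{S/n})$ where $n = pm$, and specializing (a) to $\langle r\rangle$ gives $p\langle r\rangle = V_p\langle r^p\rangle$ in any $E_T(R)$ with $p \in T$. If instead $p \notin S/n$ (the case $pm \notin S/p$), a parallel inductive argument using $\psi_{S/n}$ and Fermat's little theorem, exploiting that $p \nmid m'$ for every $m' \in S/n$ (hence $m'$ is invertible in the $\F_p$-algebra $R$), shows $p\langle r\rangle = 0$ in $E_{S/n}(R)$, whence $p V_m\langle r\rangle = 0$.

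\medskip

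\emph{Main obstacle.} The substantive step is the divisibility claim $\bfa \in (p\Z)^S$ in part (a), which is essentially the classical identity $V(1) = p$ in $W(\F_p) = \Z_p$ translated into our setup. The inductive argument is standard but requires careful $p$-adic bookkeeping; everything else is formal manipulation with $V_n$, $F_m$, and the projection formula.
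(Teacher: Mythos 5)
Your proposal is correct, but it takes a genuinely different route from the paper's. The paper proves both parts by transferring to the power series model $\hU(R) = 1 + tR\llbracket t\rrbracket$ via Theorem \ref{jc}, reducing to $S = \N$, and checking the identities on the elements $1 - rt^n = V_n(1-rt)$ using the explicit formulas of Proposition \ref{t3n}; the only arithmetic input is the freshman's-dream identity $(1-rt^n)^p = 1 - r^p t^{np}$ in $\hU(R)$, valid because $R$ is an $\F_p$-algebra. Part (b) is then a one-line reduction to $n = 1$ (commuting $F_p$ past $V_n$, which is legitimate by part (a)). Your argument instead works directly in the $\Psi_S$-parametrization: you use the projection formula $V_p(F_p(x)\cdot 1) = xV_p(1)$ to reduce (a) to $V_p(1) = p$, functoriality to reduce to $R = \F_p$, and then a componentwise $p$-adic estimate on the unique $\Psi_S$-preimage $\bfa$ of $\eta = p\cdot 1 - V_p(1) \in X_S(\Z)$; for (b) you do an explicit case check on the generators $V_n\langle r\rangle$, feeding part (a) into the case $p \mid n$. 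Both routes succeed. What the paper's approach buys is brevity and transparency: the single power-series identity does all the arithmetic work at once. What your route buys is that it avoids the transport of $F_p,V_p$ to $\hU(R)$ entirely and stays within $X_S(\cdot)$.

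Two small points. First, in part (a) with $p \nmid m$, Fermat's little theorem is not actually needed: $\eta_m = p$, the inductive hypothesis $a_n \in p\Z$ for $n\|m$ gives $a_n^{m/n} \in p\Z$, so $ma_m \in p\Z$ and invertibility of $m$ mod $p$ finishes it. (The real content is the case $p\mid m$: one needs $\nu_p\bigl(na_n^{m/n}\bigr) \geq \nu_p(m)+1$ for each $n\|m$, which reduces to $q \geq \nu_p(q)+1$ with $q = m/n \geq 2$; worth spelling out since it is the only nontrivial estimate.) Second, in part (b) the ``parallel inductive argument'' in the sub-case $p\mid n$, $n \notin S/p$ can be avoided: once (a) is known, Corollary \ref{t27n} for $V_p$ on $E_\N(R)$ shows $pE_\N(R) = V_pF_pE_\N(R) \subset V_pE_{\N/p}(R) = \Ker\bigl(E_\N(R) \to E_{T'}(R)\bigr)$ where $T' = \{m : p\nmid m\}$, and since $S/n \subset T'$ the projection kills $p\cdot E_{S/n}(R)$, giving $p\langle r\rangle = 0$ in $E_{S/n}(R)$ directly.
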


\begin{proof}
The projection $E (R) \twoheadrightarrow E_S (R)$ commuting with $V_p$, $F_p$ and the maps induced by $\phi_p$ it suffices to consider the case $S = \N$. \\
a) Recall the isomorphism $\nu_R : \hU (R) \silo E (R)$ from Theorem \ref{jc} for the presentation \eqref{eq:i1a}. We can check the assertion $V_p F_p = p$ in $\hat{U} (R) = 1 + tR [[t]]$. Because of the homeomorphism \eqref{eq:x11} and Proposition \ref{t3n} it suffices to do so on the elements of the form $1 - rt^n = V_n (1 - rt)$. For $p \mid n$ we have $V_p F_p V_n = p V_p V_{n/p} F_1 = pV_n$ by the relation in Proposition \ref{t1} b) which also holds in $E (R)$. For $p \nmid n$ using Propositions \ref{t1} a) and \ref{t3n} we have 
\[
V_p F_p V_n (1-rt) = V_p V_n F_p (1 - rt) = V_{pn} (1 - r^p t) = 1 - r^p t^{pn} \; .
\]
Since $R$ is an $\F_p$-algebra we have $1 - r^p t^{pn} = (1 - rt^n )^p$ in $\hat{U} (R)$ and hence $V_p F_p = p$ on $1 - rt^n$.\\
b) It suffices to check the relation $F_p = E (\phi_p)$ on the elements $1 - rt^n = V_n (1 - rt)$. Since $F_p$ commutes with $V_n$ on $E (R)$ by part a) and Proposition \ref{t1} and since $E (\phi_p)$ commutes with $V_n$ as well we may assume $n = 1$. By definition $F_p (1 - rt) = 1 - r^pt = E (\phi_p) (1 - rt)$ and we are done.
\end{proof}

The multiplicative Teichm\"uller map $\langle \rangle : R \to E_S (R)$ induces a homomorphism of rings $\alpha : \Z R \to E_S (R)$. Even if $S$ is finite, $\alpha$ will not be surjective in general. However we have the following theorem which is a consequence of the proposition by Umberto Zannier in the appendix of this paper.

\begin{theorem}
\label{t11n}
Let $R$ be an $\oF_p$-algebra and $S$ a finite truncation set. Then the map $\alpha : \Z R \to E_S (R)$ is surjective. For arbitrary truncation sets $S$ the image is dense.
\end{theorem}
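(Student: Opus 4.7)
By \eqref{j6} every element of $E_S(R)$ has the form $\sum_{n \in S} V_n \langle r_n \rangle$ with $r_n \in R$, a finite sum when $S$ is finite. Since $\alpha(\Z R)$ is an additive subgroup of $E_S(R)$, surjectivity reduces to showing $V_n \langle r \rangle \in \alpha(\Z R)$ for every $n \in S$ and $r \in R$; the case $n=1$ is trivial because $V_1 \langle r \rangle = \langle r \rangle = \alpha([r])$. For an arbitrary truncation set $S$ the density assertion then follows from the finite case via $E_S(R) = \varprojlim_T E_T(R)$ (see \eqref{eq:8}), using that $\alpha$ is compatible with the projections $E_S(R) \twoheadrightarrow E_T(R)$.

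To reduce to the case $n = p^k$ I would proceed in two steps. First, $V_{n_1 n_2} = V_{n_1} V_{n_2}$ from Proposition \ref{t1} a) together with the additivity of Verschiebung gives an induction on $n$: if $V_{n_2}\langle r \rangle = \sum_i m_i \langle a_i \rangle$ is already in the image then $V_{n_1 n_2}\langle r \rangle = \sum_i m_i V_{n_1}\langle a_i \rangle$, so it suffices to consider $n$ prime. Second, every prime $\ell \neq p$ appearing in $S$ is invertible in the $\F_p$-algebra $R$, so Proposition \ref{t213xn} splits off the $p$-primary part of $E_S(R)$ as a direct factor, and for the prime-to-$p$ factors one can realise each $V_\ell \langle r \rangle$ as an integer combination of Teichm\"uller lifts using the abundance of scalars in $\oF_p$ (for instance, $\langle a+b \rangle - \langle a \rangle - \langle b \rangle$ has the shape $V_\ell\langle\cdot\rangle$ by a ghost-component check, and the invertibility of $\ell$ together with roots of unity in $\oF_p$ allow such relations to exhaust the $\ell$-th Verschiebung).

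The essential case is $n = p^k$. Proposition \ref{t9} a) states $V_p F_p = p$ on $E_S(R)$, and iteration yields $V_{p^k}\langle r \rangle = p^k \langle t \rangle = \alpha(p^k [t])$ whenever $t \in R$ satisfies $t^{p^k} = r$. For perfect $R$ this settles the question. For a general $\oF_p$-algebra some $r$ may possess no $p^k$-th root, and here we invoke the appendix: Zannier's proposition produces, for any such $r$, integers $n_i$ and elements $a_i \in R$ with $\sum_i n_i \langle a_i \rangle = V_{p^k}\langle r \rangle$ in $E_S(R)$. Unwinding the Witt-vector coordinates, this is the solvability of a specific system of polynomial identities over $\oF_p$, which is what Zannier establishes by exploiting the algebraic closedness of $\oF_p$.

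The main obstacle is this last step: elementary arguments based on the relation $V_p F_p = p$ alone do not reach $V_{p^k}\langle r \rangle$ in the non-perfect case, and the hypothesis that $R$ be an $\oF_p$-algebra enters essentially only at this point.
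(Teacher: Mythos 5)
Your proposal takes a genuinely different route from the paper, but as written it contains real gaps that would need substantially more work to fill — and in at least one place an incorrect claim.

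The paper's proof does not decompose by Verschiebung index at all. Instead it reduces immediately to $S = S_m = \{1,\ldots,m\}$ (projections $E_S(R)\to E_T(R)$ are surjective), and then uses the group isomorphism $\nu_{R,S_m} : \hat U(R)/\hat U_m(R) \xrightarrow{\sim} E_{S_m}(R)$ from Theorem~\ref{jc}, under which $\alpha\bigl(\sum_i[\rho_i]\bigr)$ goes to $\prod_i(1-\rho_i t)$ (and, cf.\ Remark~\ref{t11nn}, negative coefficients cost nothing modulo $t^{m+1}$). Zannier's Proposition then states precisely that every class in $\hat U(R)/\hat U_m(R)$ is represented by such a product of linear factors. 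That is the whole argument: the appendix result is invoked \emph{once}, in the big-Witt power-series picture, and nothing $p$-typical is involved.

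Your intermediate claims do not hold up. The assertion that $\langle a+b\rangle - \langle a\rangle - \langle b\rangle$ ``has the shape $V_\ell\langle\cdot\rangle$ by a ghost-component check'' is false: its ghost components are $((a+b)^n - a^n - b^n)_n$, so the first component vanishes and the element lies in $\overline{\sum_{n>1}V_n E_{S/n}(R)}$, but it is not a single $V_\ell$ of a Teichm\"uller lift (nor even supported on multiples of any one $\ell$), and the subsequent claim that roots of unity in $\oF_p$ ``exhaust the $\ell$-th Verschiebung'' is too vague to be checkable. More importantly, the invocation of Zannier in the $n=p^k$ case misrepresents what the appendix proves: the Proposition there is a statement about factoring an arbitrary element of $1+tR[t]$ modulo $t^{m+1}$ into linear factors $1-\rho_i t$. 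It is not a statement producing integers $n_i$ and elements $a_i$ with $\sum_i n_i\langle a_i\rangle = V_{p^k}\langle r\rangle$; to extract your claim from Zannier one would have to first translate into the power-series picture — which is exactly the step the paper takes and your proposal does not. As it stands, the reduction to prime (or prime-power) Verschiebungen and the two prime cases are not completed, so the argument has a genuine gap at every step past the initial reduction.
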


\begin{proof}
The second assertion follows from the first one by using the topological isomorphism \eqref{eq:8}. For finite truncation  sets $T \subset S$ the projections $E_S (R) \to E_T (R)$ are surjective e.g. by Proposition \ref{tx2}. Hence we may assume that the finite truncation set $S$ has the form $S_m = \{ 1 , \ldots , m \}$ for some $m \ge 1$. According to Theorem \ref{jc} there is a natural isomorphism of groups where $\hU_m (R) = 1 + t^{m+1} R [[t]]$ and $\hU (R) = \hU_0 (R)$:
\[
\nu_{R,S_m} : \hat{U} (R) / \hat{U}_m (R) \silo E_{S_m} (R) \; .
\]
Via $\nu_{R,S_m}$ the Teichm\"uller map for $E_{S_m} (R)$ corresponds to the map \\
$R \to \hat{U} (R) / \hat{U}_m (R)$ which sends $\rho$ to the class of $(1 - \rho t)$. Under the induced additive homomorphism $\Z R \to \hat{U} (R) / \hat{U}_m (R)$ an element of the form $\sum_i [\rho_i]$ with $\rho_i \in R$ is therefore sent to $\prod_i (1 - \rho_i t)$. The proposition in the appendix by Umberto Zannier states that polynomials of this form represent all classes in $\hat{U} (R) / \hat{U}_m (R)$ and we are done.
\end{proof}

\begin{remark}
\label{t11nn} \rm 
For an $\F_p$-algebra $R$ and $p^r > m$ we have in $\hat{U} (R)$
\[
(1 - at)^{-n} \equiv (1 - a^{p^r} t^{p^r}) (1 -a t)^{-n} \equiv (1 - at)^{p^r-n} \mod \hat{U}_m (R) \; .
\]
It follows that the image of $\Z R$ in $\hat{U} (R) / \hat{U}_m(R) \cong E_{S_m} (R)$ is the same as the image of the subset $(\Z R)^+ = \{ \sum n_r [r] \mid n_r \ge 0 \}$. The counting argument in Remark 2 of the appendix therefore shows that the map $\Z R \to E_{S_m} (R)$ is not surjective for $R = \F_p$ if $m \ge 2p -1$. 
\end{remark}

\section{The $p$-typical case for perfect $\F_p$-algebras} \label{sec:6}
In this section we fix a prime number $p$ and consider the truncation sets $S_p = \{ 1 , p, p^2 , \ldots \}$ and $S^{(n)}_p = \{ 1 , p , \ldots , p^{n-1} \}$ for $n\ge 1$ . As in the introduction the indexing is now by the exponents of $p$. We write $E (R)$ for $E_{S_p} (R)$ and $E_n (R)$ for $E_{S^{(n)}_p} (R)$ etc. and set $V = V_p$ and $F = F_p$. Corollary \ref{t27n} implies that for $n, m \ge 1$ there are exact sequences of additive groups
\[
0 \longrightarrow E_m (R) \xrightarrow{V^n} E_{n+m} (R) \xrightarrow{\proj} E_n (R) \longrightarrow 0
\]
and
\[
0 \longrightarrow E (R) \xrightarrow{V^n} E (R) \xrightarrow{\proj} E_n (R) \longrightarrow 0 \; .
\]
In the rest of this section we assume that $R$ is a commutative possibly non-unital $\F_p$-algebra. We have $FV = p$ and by Proposition \ref{t9} also $VF = p$. Moreover $F = E (\phi)$ on $E (R)$ where $\phi : R \to R , \phi (r) = r^p$ is the Frobenius map. 

The multiplicative Teichm\"uller map $\langle \rangle : R \xrightarrow{[]} \Z R \xrightarrow{\langle \rangle} X (\Z R) \to E (R)$ induces a homomorphism of rings $\alpha : \Z R \to E (R)$ such that the following diagram commutes
\begin{equation}
\label{eq:35n}
\xymatrix{
\Z R \ar[r]^{\alpha} \ar[d]_{\pi} & E (R) \ar[d] \\
R \ar@{=}[r] & E_1 (R) \; .
}
\end{equation}

\begin{proposition} 
\label{tx15}
Let $I$ be the kernel of the natural projection $\pi : \Z R \to R$. The map $\alpha$ induces a homomorphism
\begin{equation}
\label{eq:37}
\alpha_n : \Z R / I^n \longrightarrow E_n (R) \; .
\end{equation}
\end{proposition}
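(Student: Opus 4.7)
The plan is to show that the composition $\bar\alpha : \Z R \xrightarrow{\alpha} E(R) \twoheadrightarrow E_n(R)$ vanishes on $I^n$. Since we have the exact sequence
\[
0 \longrightarrow E(R) \xrightarrow{V^n} E(R) \longrightarrow E_n(R) \longrightarrow 0
\]
from Corollary \ref{t27n}, it is equivalent to show $\alpha(I^n) \subset V^n E(R)$.

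First I would check the $n = 1$ case: diagram \eqref{eq:35n} identifies the projection $E(R) \to E_1(R) = R$ with $\pi$ after composing with $\alpha$, so $\alpha(I) \subset \ker(E(R) \to E_1(R)) = V E(R)$.

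Next, the key step is an induction showing that a product of any $n$ elements of $VE(R)$ lies in $V^n E(R)$. The base case is trivial. For the inductive step, the essential identity is
\[
V(x)\, V(y) \;=\; p V(xy) \;=\; V^2(F(xy)) \;=\; V^2(Fx \cdot Fy),
\]
where the first equality is the special case $n = m = p$ of Proposition \ref{t1}(c), and the second uses $p = VF$, which holds on $E(R)$ precisely because $R$ is an $\F_p$-algebra (Proposition \ref{t9}(a)). Iterating this (and using Proposition \ref{t1}(c) in the mixed form $V^k(a) V(b) = pV^k(a \cdot F^{k-1}b \cdot ?)$ analogously) yields
\[
V(x_1)\cdots V(x_n) \;=\; V^n\bigl(F^{n-1}x_1 \cdots F^{n-1}x_n\bigr) \;\in\; V^n E(R).
\]
By $\Z$-bilinearity this extends to arbitrary products of $n$ elements of $VE(R)$.

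Combining the two steps, for any $a_1, \ldots, a_n \in I$ we have $\alpha(a_i) \in VE(R)$, hence $\alpha(a_1 \cdots a_n) = \alpha(a_1) \cdots \alpha(a_n) \in V^n E(R)$. Since $I^n$ is additively generated by such products, $\alpha(I^n) \subset V^n E(R)$, which is the kernel of $E(R) \to E_n(R)$, so $\alpha$ descends to $\alpha_n : \Z R / I^n \to E_n(R)$. The only subtlety is verifying the multiplicative identity in the inductive step; this is where the $\F_p$-algebra hypothesis (via $p = VF$) enters in an essential way, since over a general base the relation $V(x)V(y) = pV(xy)$ only puts the product in $pVE(R)$, not in $V^2 E(R)$.
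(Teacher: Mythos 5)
Your proof is correct and follows essentially the same route as the paper: show $\alpha(I) \subset VE(R)$ via diagram \eqref{eq:35n}, then use $VF = p$ (Proposition \ref{t9}(a)) together with Proposition \ref{t1}(c) to get $(VE(R))^n \subset V^n E(R) = \ker(E(R) \to E_n(R))$, which the paper phrases as \eqref{eq:36}. The only blemish is the stray ``$\cdot\,?$'' in your mixed-form identity; the correct statement is $V^k(a)\,V(b) = p\,V^k\bigl(a\,F^{k-1}(b)\bigr)$ with no further factor, and then $p\,V^k(c) = V^{k+1}(F(c))$ by $VF=p$ closes the induction exactly as you intend.
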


\begin{proof}
We have $\alpha (I) \subset \Ker (E (R) \to R) = V E (R)$ by diagram \eqref{eq:35n} and hence
\[
\alpha (I^n) \subset \alpha (I)^n \subset (V E (R))^n \subset V^n E (R) \; .
\]
The last inclusion holds because the relations $p = FV = VF$ and Proposition \ref{t1}, c) imply that
\begin{equation}
\label{eq:36}
(V^{\nu} E (R)) (V^{\mu} E (R)) \subset V^{\nu + \mu} E (R) \quad \text{for all} \; \nu , \mu \ge 0 \; .
\end{equation}
\end{proof}

The ghost map $\Gh : E(R) \to R^{\N_0}$ can be improved as follows. We have inclusions
\[
X (I) \subset \prod^{\infty}_{\nu =0} I^{\nu +1} \subset I^{\N_0}
\]
because the $\nu$-th component ($p^{\nu}$-th in the previous section) of $V^{\mu} \langle a \rangle$ is zero for $\nu < \mu$ and equal to $p^{\mu} a^{p^{\nu -\mu}} \in I^{\nu +1}$ for $\nu , \mu \ge 0$ and $a \in I$. Note here that $p \in I$ since $R$ is an $\F_p$-algebra. Similarly $X_n (I) \subset \prod^{n-1}_{\nu =0} I^{\nu+1}$. 

\begin{definition}
\label{t17}
For an $\F_p$-algebra $R$ the refined ghost maps are the induced homomorphisms of rings
\[
\Gh_{\re} : E (R) \to \prod^{\infty}_{\nu =0}  \Z R / I^{\nu+1} \quad \text{and} \quad \Gh_{\re} : E_n (R) \to \prod^{n-1}_{\nu =0} \Z R / I^{\nu +1} \; .
\]
\end{definition}

By $\phi : \Z R \to \Z R$ we also denote the map induced by the Frobenius $\phi : R \to R$. 
\begin{lemma}
\label{t18}
The following diagram commutes
\[
\xymatrix{
 \Z R / I^n \ar[r]^{\alpha_n} \ar[d]_{\phi^{n-1}} & E_n (R) \ar[d]^{\Gh_{\re}} \\
 \Z R / I^n \; & \; \prod^{n-1}_{\nu =0} \Z R / I^{\nu+1} \ar[l]^-{\proj_{n-1}}
}
\]
\end{lemma}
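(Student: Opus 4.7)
The plan is to reduce the lemma to an explicit formula for $\alpha$ which was set up in Section \ref{sec:4}. Recall from diagram \eqref{eq:30} that the ring homomorphism $\alpha: \Z R \to E(R)$ factors as $E(\pi) \circ f_{S_p}$, where $f_{S_p}: \Z R \to E(\Z R)=X(\Z R)$ is the Cartier--Dieudonn\'e homomorphism of Proposition \ref{t12} for the Frobenius lift $\phi$ on $\Z R$ determined by $\phi([r])=[r^p]$. Explicitly, for any $a\in \Z R$ we have
\[
f_{S_p}(a) = (a,\phi(a),\phi^2(a),\ldots) \in X(\Z R) \subset (\Z R)^{\N_0},
\]
and this tuple is a ring-theoretic (hence automatically additive) lift of $\alpha(a)$.

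With this in hand, I would first unwind the top-right path of the diagram as follows. Given $a\in\Z R$, the element $\alpha_n(a)\in E_n(R) = X_n(\Z R)/X_n(I)$ is represented by the truncated tuple $(a,\phi(a),\ldots,\phi^{n-1}(a))\in X_n(\Z R)\subset(\Z R)^n$. By construction of the refined ghost map (using $X_n(I)\subset\prod_{\nu=0}^{n-1}I^{\nu+1}$, see the discussion preceding Definition \ref{t17}), the map $\Gh_{\re}$ sends this class to
\[
\bigl(a \bmod I,\;\phi(a)\bmod I^2,\;\ldots,\;\phi^{n-1}(a)\bmod I^n\bigr) \in \prod_{\nu=0}^{n-1} \Z R/I^{\nu+1}.
\]
The projection $\proj_{n-1}$ onto the last factor then returns $\phi^{n-1}(a)\bmod I^n$, which is exactly the value obtained by traversing the diagram via the left vertical arrow $\phi^{n-1}: \Z R/I^n \to \Z R/I^n$.

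The argument is pure bookkeeping once the factorization $\alpha = E(\pi)\circ f_{S_p}$ is invoked, so there is no serious obstacle; the only thing to be slightly careful about is that $\Gh_{\re}$ lands in $\prod_{\nu=0}^{n-1}\Z R/I^{\nu+1}$ (with the correct exponents). This is guaranteed by the inclusion $X_n(I)\subset\prod_{\nu=0}^{n-1}I^{\nu+1}$, which in turn relies on $p\in I$ (since $R$ is an $\F_p$-algebra), so that the $\nu$-th coordinate of any $V^\mu\langle a\rangle$ with $a\in I$ and $\mu\le\nu$ lies in $p^\mu I^{p^{\nu-\mu}}\subset I^{\nu+1}$. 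Hence the diagram commutes.
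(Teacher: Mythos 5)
Your proposal is correct. The only genuine difference from the paper's own proof is the route you take to compute $\Gh_{\re}\circ\alpha_n$: you invoke the factorization $\alpha = E(\pi)\circ f_{S_p}$ from diagram \eqref{eq:30} and the explicit formula $f_{S_p}(a) = (\phi^\nu(a))_{\nu\ge0}$ of Proposition \ref{t12} (which in turn rests on Dwork's lemma, Proposition \ref{t11}) to obtain a representative of $\alpha_n(a)$ in $X_n(\Z R)$ for \emph{arbitrary} $a\in\Z R$. The paper instead only evaluates on the Teichm\"uller generators $[r]$, where $\alpha_n([r]\bmod I^n) = \langle[r]\rangle \bmod X_n(I)$ is immediate from the definition of $\alpha$ (no Cartier--Dieudonn\'e machinery needed, since $\langle[r]\rangle = ([r]^{p^\nu})_\nu = (\phi^\nu([r]))_\nu$ trivially), and then implicitly appeals to the fact that both compositions around the square are ring homomorphisms, so agreement on the multiplicative generators $[r]$ of $\Z R/I^n$ suffices. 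So your argument trades the ``check on generators and extend'' step for a direct computation on all elements, at the cost of citing Proposition \ref{t12}; the paper's version is leaner but relies on the ring-hom observation being obvious. Both are sound. Your discussion of why $\Gh_{\re}$ lands in $\prod_{\nu=0}^{n-1}\Z R/I^{\nu+1}$ (the inclusion $X_n(I)\subset\prod_{\nu=0}^{n-1}I^{\nu+1}$, using $p\in I$ and the estimate $\mu + p^{\nu-\mu}\ge\nu+1$) matches the paper's discussion preceding Definition \ref{t17} exactly.
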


\begin{proof}
For $r \in R$ we have $\alpha_n ([r] \mod I^n) = \langle r \rangle = \langle [r] \rangle \mod X_n (I)$. Hence
\[
(\Gh_{\re} \verk \alpha_n) ([r] \mod I^n) = ([r^{p^{\nu}}] \mod I^{\nu+1})_{0 \le \nu \le n-1}
\]
and the assertion follows. 
\end{proof}

Recall that an $\F_p$-algebra $R$ is called perfect if its Frobenius endomorphism $\phi$ is an isomorphism.

\begin{cor}
\label{t19}
For a perfect $\F_p$-algebra $R$ the map $\alpha_n : \Z R / I^n \silo E_n (R)$ is an isomorphism whose inverse is the composition $\phi^{1-n} \verk \proj_{n-1} \verk \Gh_{\re}$. The induced map $\halpha : \varprojlim_n \Z R / I^n \silo E (R)$ is a topological isomorphism of topological rings. 
\end{cor}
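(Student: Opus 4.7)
The plan is to prove bijectivity of each $\alpha_n$ directly and then assemble the inverses over $n$. First I would use perfectness to turn $\phi$ into an automorphism at the presentation level: the Frobenius of $R$ lifts to a ring endomorphism of $\Z R$ via $\phi([r]) = [r^p]$, and this is bijective because $\phi$ is bijective on $R$. Since $\pi \circ \phi = \phi \circ \pi$, this automorphism of $\Z R$ sends $I$ onto $I$, and hence $\phi^{n-1}$ induces an automorphism of $\Z R / I^n$.

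For injectivity of $\alpha_n$, I would simply invoke Lemma \ref{t18}, which identifies $\proj_{n-1} \circ \Gh_{\re} \circ \alpha_n$ with $\phi^{n-1}$. Since the right-hand side is bijective, $\alpha_n$ is forced to be injective, and its inverse is forced to be the composition $\phi^{1-n} \circ \proj_{n-1} \circ \Gh_{\re}$ as claimed in the statement.

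The main point is surjectivity, and this is where the perfect hypothesis is used essentially. By Theorem \ref{jc} together with formula \eqref{j6}, every element of $E_n(R)$ has the form $\sum_{i=0}^{n-1} V^i \langle r_i\rangle$ with $r_i \in R$. In characteristic $p$, Proposition \ref{t9}(a) gives $VF = p$, and combined with $F\langle s\rangle = \langle s^p\rangle$ this yields $V^i \langle \phi^i(s)\rangle = V^i F^i \langle s\rangle = p^i \langle s\rangle$. Using perfectness to write $r_i = \phi^i(s_i)$ for a unique $s_i \in R$, each summand becomes $p^i \langle s_i\rangle = \alpha_n\bigl(p^i [s_i] \bmod I^n\bigr)$, which manifestly lies in the image of $\alpha_n$. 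I expect this perfection trick of absorbing $V$ into $p$ to be the main obstacle, in the sense that it is the one step that genuinely requires the perfect hypothesis rather than being formal.

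Finally, for $\hat\alpha$ I would observe that the $\alpha_n$ are compatible with the natural projections of the two systems, because all of them are induced by the single ring homomorphism $\alpha : \Z R \to E(R)$. Hence the $\alpha_n^{-1}$ are also compatible, and they assemble to a ring isomorphism $\hat\alpha : \varprojlim_n \Z R/I^n \to \varprojlim_n E_n(R) = E(R)$, where the last identification is \eqref{eq:8}. Both sides carry the projective limit topology of discrete abelian-group quotients, and a compatible family of level-wise bijections of such quotients produces a homeomorphism on the inverse limit. Thus $\hat\alpha$ is a topological isomorphism of topological rings.
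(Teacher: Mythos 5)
Your proposal is correct and follows essentially the same route as the paper: injectivity via Lemma \ref{t18} and the invertibility of $\phi^{n-1}$, surjectivity via the decomposition $\sum_\nu V^\nu\langle r_\nu\rangle$ from Theorem \ref{jc}/\eqref{j6} together with $VF=p$ from Proposition \ref{t9} and perfectness to absorb $V^\nu$ into $p^\nu$, and then assembling the level-wise isomorphisms over $n$ using \eqref{eq:8}. The only cosmetic difference is that you factor $r_i=\phi^i(s_i)$ up front while the paper iterates the single relation $V\langle r\rangle=p\langle\phi^{-1}(r)\rangle$; these are the same computation.
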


\begin{rem}
Since we know that $E_n (R) \cong W_n (R)$ we recover Corollary 7 of \cite{CD}. 
\end{rem}

\begin{proof}
Since $\phi$ is injective, Lemma \ref{t18} implies that $\alpha_n$ is injective. Every element of $E_n (R)$ has the form $\sum^{n-1}_{\nu =0} V^{\nu} \langle r_{\nu} \rangle$ for $r_{\nu} \in R$. For $r \in R$, using Proposition \ref{t9}, we obtain
\[
V \langle \phi (r) \rangle = V E_{n-1} (\phi) \langle r \rangle = VF \langle r \rangle = p \langle r \rangle \quad \text{in} \; E_n (R) \; .
\]
Thus $V \langle r \rangle = p \langle \phi^{-1} (r) \rangle$ and therefore
\begin{equation} \label{eq:55.5}
V^{\nu} \langle r \rangle = p^{\nu} \langle \phi^{-\nu} (r) \rangle = \alpha_n (p^{\nu} [\phi^{-\nu} (r)]) \; .
\end{equation}
Hence $V^{\nu} \langle r \rangle$ is in the image of $\alpha_n$ and therefore $\alpha_n$ is also surjective. By lemma \ref{t18} its inverse is $\phi^{1-n} \verk \proj_{n-1} \verk \Gh_{\re}$. The rest is clear.
\end{proof}

\begin{cor}
\label{t65}
For a perfect $\F_p$-algebra $R$, the inverse of the map $\alpha_n : \Z R / I^n \silo E_n (R) \cong W_n (R)$ is given by the formula 
\[
\alpha^{-1}_n (r_0 , \ldots , r_{n-1}) = \sum^{n-1}_{\nu = 0} p^{\nu} [\phi^{-\nu} (r_{\nu})] \mod I^n \; .
\]
\end{cor}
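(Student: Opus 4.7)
The plan is to reduce Corollary \ref{t65} immediately to the computation already carried out in the proof of Corollary \ref{t19}. Recall that the identification $W_n(R) = R^{S_p^{(n)}} \cong E_n(R)$ is given by the parametrization $\psi_{S_p^{(n)}}$ of Theorem \ref{jc}, which by formula \eqref{j6} sends an $n$-tuple $(r_0, \ldots, r_{n-1})$ to
\[
\psi_{S_p^{(n)}}(r_0, \ldots, r_{n-1}) = \sum_{\nu=0}^{n-1} V^\nu \langle r_\nu \rangle \in E_n(R).
\]
So the assertion to be proved becomes
\[
\alpha_n\!\left(\sum_{\nu=0}^{n-1} p^\nu [\phi^{-\nu}(r_\nu)] \bmod I^n\right) = \sum_{\nu=0}^{n-1} V^\nu \langle r_\nu \rangle \quad \text{in } E_n(R).
\]

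The next step is to invoke the key identity \eqref{eq:55.5} from the proof of Corollary \ref{t19}, which was established using Proposition \ref{t9} (the relation $VF = p$) together with the perfectness of $R$ (so that $\phi$ is invertible and $\langle \phi^{-1}(r) \rangle$ makes sense). That identity reads $V^\nu \langle r \rangle = \alpha_n(p^\nu [\phi^{-\nu}(r)])$ for each $\nu$ and $r \in R$. Since $\alpha_n$ is a ring homomorphism, in particular additive, applying it term-by-term to the sum gives precisely the desired formula.

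Thus the proof reduces to simply summing \eqref{eq:55.5} over $\nu = 0, \ldots, n-1$ and invoking $\alpha_n$-linearity; the bijectivity of $\alpha_n$ (Corollary \ref{t19}) then lets us read off $\alpha_n^{-1}$ by inverting. There is no real obstacle here: the only substantive content — that $V^\nu \langle r \rangle$ pulls back to $p^\nu[\phi^{-\nu}(r)]$ — was already carried out in the preceding corollary, and perfectness of $R$ is what guarantees that the expression $[\phi^{-\nu}(r_\nu)]$ is well-defined. This corollary should therefore be stated as a one-line consequence of Corollary \ref{t19}.
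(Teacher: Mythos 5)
Your proof is correct and takes essentially the same route as the paper: both rely on summing the identity $V^\nu \langle r \rangle = \alpha_n(p^\nu[\phi^{-\nu}(r)])$ (formula \eqref{eq:55.5}, established in the proof of Corollary \ref{t19}) over $\nu$ and using additivity of $\alpha_n$ together with the identification $\psi_{S_p^{(n)}}$. The paper's one-line proof does exactly this, and your observation that Corollary \ref{t65} is a direct consequence of \ref{t19} matches its intent.
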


\begin{proof}
Using formula \eqref{eq:55.5} we have
\[
\alpha_n \Big( \sum^{n-1}_{\nu = 1} p^{\nu} [\phi^{-\nu} (r_{\nu})] \mod I^n\Big) = \sum^{n-1}_{\nu =0} V^{\nu} \langle r_{\nu} \rangle \equiv (r_0 , \ldots , r_{n-1}) \in W_n (R) \; .
\]
\end{proof}

Corollary \ref{t19} can be generalized to more general resolutions of $R$. Let $A$ be a ring without $p$-torsion which is equipped with a ring endomorphism $\phi$ lifting the Frobenius endomorphism of $A / pA$. Assume that we have an exact sequence
\begin{equation}
\label{eq:xxx1}
0 \longrightarrow I \longrightarrow A \xrightarrow{\pi} R \longrightarrow 0 \; .
\end{equation}
Consider the composition
\[
\alpha : A \xrightarrow{f} E (A) \xrightarrow{E (\pi)} E (R) \; .
\]
where $f = f_{S_p}$ is the ring homomorphism of Proposition \ref{t12}. Note that for the standard resolution by $A = \Z R$ this map $\alpha$ is the same as in \eqref{eq:35n} above. This follows from diagram \eqref{eq:30}. It follows from the definition of $f$ that $\alpha (I)\subset \Ker (E (R) \to R) = V E (R)$ and as in the proof of Proposition \ref{tx15} we obtain an induced ring homomorphism:
\[
\alpha_n : A / I^n \longrightarrow E_n (R) \; .
\]
The same arguments as before using the corresponding refined ghost map
\[
\Gh_{\re} : E_n (R) \longrightarrow \prod^{n-1}_{\nu =0} A / I^{\nu + 1}
\]
and the commutative diagram
\[
\xymatrix{
A / I^n \ar[rr]^{\alpha_n} \ar[d]_{\phi^{n-1}} & & E_n (R) \ar[d]^{\Gh_{\re}} \\
A / I^n & & \prod^{n-1}_{\nu = 0} A / I^{\nu+1} \ar[ll]_{\proj_{n-1}}
}
\]
imply the following generalization of Corollary \ref{t19}.

\begin{theorem}\label{t55}
Let $R$ be a perfect $\F_p$-algebra and consider a resolution \eqref{eq:xxx1} as above such that $\phi$ is an isomorphism of $A$. Then the map $\alpha_n : A / I^n \to E_n (R)$ is an isomorphism with inverse $\phi^{1-n} \verk \proj_{n-1} \verk \Gh_{\re}$ and we have
\begin{equation}
\label{eq:53a}
E_n (\phi) \verk \alpha_n = \alpha_n \verk \phi \; .
\end{equation} 
The induced map $\halpha : \varprojlim A / I^n \to E (R)$ is a topological isomorphism of topological rings.
\end{theorem}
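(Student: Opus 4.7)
The plan is to follow the template of the proof of Corollary \ref{t19}, the only real addition being a Teichm\"uller-type construction inside $A/I^n$ that will replace the element $[s]\in\Z R$ used there. First I would verify that $\phi$ restricts to an automorphism of each $I^n$, and hence induces an automorphism of $A/I^n$: the inclusion $\phi(I)\subset I$ is immediate from $\phi(a)\equiv a^p\pmod{pA}$ together with $p\in I$, while the reverse $\phi^{-1}(I)\subset I$ follows from the identity $\pi\circ\phi=\phi_R\circ\pi$ (which itself follows from $\phi(a)-a^p\in pA\subset I$) together with the injectivity of $\phi_R$ on the perfect ring $R$. Thus $\phi^{n-1}$ is an automorphism of $A/I^n$, and the commutative diagram displayed just above the theorem gives $\proj_{n-1}\circ\Gh_{\re}\circ\alpha_n=\phi^{n-1}$, which forces $\alpha_n$ to be injective. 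The natural candidate for the inverse is then $\beta_n:=\phi^{1-n}\circ\proj_{n-1}\circ\Gh_{\re}$.

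For surjectivity I reduce exactly as in the proof of Corollary \ref{t19}: the elements $V^\nu\langle r\rangle$ additively generate $E_n(R)$, and Proposition \ref{t9}(b) together with perfectness of $R$ yields $V^\nu\langle r\rangle=p^\nu\langle\phi_R^{-\nu}(r)\rangle$, so it is enough to show that each Teichm\"uller element $\langle s\rangle$ lies in the image of $\alpha_n$. I would introduce the Teichm\"uller lift
\[
\tau_n(s):=\phi^{-(n-1)}(\tilde s^{p^{n-1}})\bmod I^n\in A/I^n
\]
for any lift $\tilde s\in A$ of $s$; a Dwork-style estimate shows that the sequence $\phi^{-k}(\tilde s^{p^k})$ is $I$-adically Cauchy, so $\tau_n(s)$ is independent of the choice of $\tilde s$ and multiplicative in $s$. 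The central claim is then $\alpha_n(\tau_n(s))=\langle s\rangle$ in $E_n(R)$; granting this, $V^\nu\langle r\rangle=\alpha_n\bigl(p^\nu\tau_n(\phi_R^{-\nu}(r))\bigr)$ lies in the image, and $\alpha_n$ is surjective, hence an isomorphism with inverse $\beta_n$.

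Verifying $\alpha_n(\tau_n(s))=\langle s\rangle$ is the heart of the argument and the main obstacle. Taking the explicit lift $\tilde\tau:=\sigma^{p^{n-1}}$ of $\tau_n(s)$ with $\sigma:=\phi^{-(n-1)}(\tilde s)$, the $p^\nu$-th component of $f(\tilde\tau)-\langle\tilde\tau\rangle\in X_n(A)$ equals $\phi^\nu(\sigma)^{p^{n-1}}-\sigma^{p^{n-1+\nu}}$; writing $\phi^\nu(\sigma)=\sigma^{p^\nu}+pd_\nu$ and using the estimate $v_p\bigl(\binom{p^{n-1}}{j}p^j\bigr)\ge n$ for every $j\ge 1$ shows that this component actually lies in $p^nA$. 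Using the bijection $\Psi_n:A^S\silo X_S(A)$ of Proposition \ref{t2}(b) together with the identification $\Psi_n(I^S)=X_n(I)$, I would then invert $\Psi_n$ recursively via $p^\nu a_\nu = x_{p^\nu}-\sum_{j<\nu}p^j a_j^{p^{\nu-j}}$ (with $a_0=0$); an inductive check gives $a_\nu\in p^{n-\nu}A\subset I$ for all $0\le\nu\le n-1$, so the unique preimage $\bfa$ lies in $I^S$ and $f(\tilde\tau)-\langle\tilde\tau\rangle\in X_n(I)$. Combined with $\langle\tilde\tau\rangle-\langle\tilde s\rangle\in X_n(I)$, a direct consequence of equation \eqref{j5} applied to the two lifts $\tilde\tau$ and $\tilde s$ of $s$, this gives $\alpha_n(\tau_n(s))=\langle s\rangle$ in $E_n(R)$.

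The intertwining $E_n(\phi)\circ\alpha_n=\alpha_n\circ\phi$ then follows from Proposition \ref{t12}'s identity $f\circ\phi=F\circ f$ combined with Proposition \ref{t9}(b), which identifies $F$ with $E_n(\phi_R)$ on $E_n(R)$; and the assertion for $\halpha$ follows by passing to the projective limit, using that the $\alpha_n$ are compatible with the surjective transition maps on both sides, each $\alpha_n$ is a homeomorphism, and $E(R)=\varprojlim_n E_n(R)$ topologically by Proposition \ref{tx2}(3). The reason the step $f(\tilde\tau)-\langle\tilde\tau\rangle\in X_n(I)$ is the genuine obstacle is that componentwise smallness of the difference in $\prod_\nu I^{\nu+1}$ is routine, but $X_n(I)$ is strictly smaller than $X_n(A)\cap\prod_\nu I^{\nu+1}$ in general, so one cannot conclude componentwise; the explicit inversion of $\Psi_n$ is what forces the estimate to improve from $I^{\nu+1}$ to $p^n A$ and thereby witness the difference inside $X_n(I)$.
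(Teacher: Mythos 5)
Your proof is correct, but it takes a genuinely different route from the paper in the crucial surjectivity step. The paper invokes Corollary~\ref{t13} (the Cartier--Dieudonn\'e isomorphism $\Phi_S$) to see that $E_n(R)$ is additively generated by the elements $V^{\nu}\alpha_{n-\nu}(a)$, and then the identity $V\alpha_{n-1}(\phi(a))=p\,\alpha_n(a)$ (from $f\circ\phi=F\circ f$ and $VF=p$) immediately shows each generator lies in the image of $\alpha_n$; the Teichm\"uller character never enters. You instead work with the generators $V^{\nu}\langle r\rangle$ coming from Theorem~\ref{jc}, reduce to $\langle s\rangle$ by the same $p^{\nu}\langle\phi_R^{-\nu}(\cdot)\rangle$ trick, and then face the genuine extra task of manufacturing a Teichm\"uller representative inside $A/I^{n}$. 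Your handling of that task is sound and is the real content of your argument: the $p$-adic estimate $v_p\bigl(\tbinom{p^{n-1}}{j}p^{j}\bigr)\ge n$ gives $f(\tilde\tau)-\langle\tilde\tau\rangle\in p^{n}A^{S}$ componentwise, and your explicit recursive inversion of $\Psi_n$ (with $a_0=0$ and the inductive bound $a_\nu\in p^{n-\nu}A$) correctly converts this into membership in $X_n(I)$ rather than merely in $X_n(A)\cap\prod_\nu I^{\nu+1}$. Your remark that the latter is in general strictly larger is exactly the subtlety one must respect, and your mechanism resolves it. The trade-off is clear: the paper's route is shorter because it reuses the machinery of Proposition~\ref{t12}/Corollary~\ref{t13} already built into the framework, whereas yours is self-contained at the cost of an explicit valuation computation; yours is also closer in spirit to the classical construction of Teichm\"uller representatives as limits of $p^{k}$-th powers.

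Two small expository points. First, the $I$-adic Cauchy digression is not needed for the proof: you only need the one fixed lift $\tilde\tau=\sigma^{p^{n-1}}$, not well-definedness or multiplicativity of $\tau_n$. Second, in the intertwining step the appeal to Proposition~\ref{t9}(b) as "identifying $F$ with $E_n(\phi_R)$ on $E_n(R)$" is slightly off: at a finite truncation level the proposition gives $F_p=E_{n-1}(\phi_R)\circ\mathrm{proj}$, not $F_p=E_n(\phi_R)$. This is easily repaired either by running the argument at $S=S_p$ where $S/p=S$ and then projecting, or by observing directly (via Corollary~\ref{j7} and the fact that $\phi$ lifts $\phi_R$ through $\pi$) that $E_n(\phi_R)$ is induced componentwise by $\phi$ on $X_n(A)$, so that $E_n(\phi_R)(f(a)\bmod X_n(I))=f(\phi(a))\bmod X_n(I)=\alpha_n(\phi(a))$.
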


\begin{proof}
The map $\phi$ being an isomorphism on $A / I^n$ it follows that $\alpha_n$ is injective. Using Corollary \ref{t13} we see that $E_n (R)$ is generated by the elements $V^{\nu} \alpha_{n - \nu} (a) = E_n (\pi) V^{\nu} (f (a))$ for $0 \le \nu < n$ and $a \in A$. The defining property \eqref{eq:28} of the map $f$ and Proposition \ref{t9}, a) show that
\[
V \alpha_{n-1} (\phi (a)) = E_n (\pi) Vf (\phi (a)) = E_n (\pi) VF (f (a)) = VF \alpha_n (a) = p \alpha_n (a) \; .
\]
Hence $V \alpha_{n-1} (a) = p \alpha_n (\phi^{-1} (a))$ and therefore $V^{\nu} \alpha_{n-\nu} (a) = p^{\nu} \alpha_n (\phi^{-\nu} (a))$. It follows that the map $\alpha_n$ is surjective. Formula \eqref{eq:53a} follows similarly using Proposition \ref{t9} b).
\end{proof}

\begin{rem}
The proof of Theorem \ref{t55} specialized to $A = \Z R$ differs from the one of Corollary \ref{t19} regarding the surjectivity of $\alpha_n$. The next Corollary corresponds to Proposition 1.3.22 of \cite{I} or Proposition 1.13 of \cite{H}.
\end{rem}

\begin{cor} \label{t65nn}
Let $A$ be a commutative ring without $p$-torsion and equipped with an endomorphism $\phi$ lifting the Frobenius endomorphism of $R = A / pA$. Assume that $R$ is a perfect $\F_p$-algebra and let $\hA = \varprojlim_n A / p^n A$ be the $p$-adic completion of $A$. Then the maps
\[
\alpha_n : A / p^n A \longrightarrow E_n (A / pA) \quad \text{and} \quad \halpha : \hA \longrightarrow E (A / pA)
\]
are isomorphisms. 
\end{cor}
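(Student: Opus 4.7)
The plan is to reduce the statement to Theorem \ref{t55} by passing to the $p$-adic completion $\hA$ of $A$, viewed with its $p$-adic topology. The key point will be to verify the two hypotheses needed to invoke Theorem \ref{t55} for the presentation $0 \to p\hA \to \hA \to R \to 0$: namely that $\hA$ has no $p$-torsion, and that $\phi$ extends to a \emph{ring automorphism} of $\hA$.

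First I would observe that $\hA$ is $p$-torsion free: if $p\hx = 0$ for $\hx = (x_n) \in \hA \subset \prod_n A/p^n A$, then each $px_n \in p^n A$ forces $x_n \in p^{n-1}A$ by $p$-torsion freeness of $A$, so $\hx = 0$. Next, since $\phi(pA) \subset pA$, the endomorphism $\phi$ is $p$-adically continuous and extends uniquely to a ring endomorphism $\hat\phi : \hA \to \hA$. I would then show $\hat\phi$ is bijective. Modulo $p$, $\hat\phi$ reduces to the Frobenius of $R$, which is bijective because $R$ is perfect. For injectivity of $\hat\phi$, if $\hat\phi(\hx) = 0$ and $\hx \neq 0$, pick the largest $k$ with $\hx \in p^k \hA$ (this exists since $\bigcap_n p^n \hA = 0$), write $\hx = p^k \hat{u}$ with $\hat{u} \notin p\hA$; then $p^k \hat\phi(\hat{u})=0$ gives $\hat\phi(\hat{u})=0$ by $p$-torsion freeness, and reducing mod $p$ contradicts the injectivity of Frobenius on $R$. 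For surjectivity, given $\hx \in \hA$ use the surjectivity of Frobenius on $R$ to inductively construct $y_0, y_1, \dots \in \hA$ with $\hat\phi\bigl(\sum_{i<n} p^i y_i\bigr) \equiv \hx \pmod{p^n}$; the series converges $p$-adically to $\hat y$ with $\hat\phi(\hat y) = \hx$.

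With these two hypotheses in hand, Theorem \ref{t55} applied to the presentation $0 \to p\hA \to \hA \to R \to 0$ (note $(p\hA)^n = p^n \hA$) asserts that
\[
\alpha_n^{\hA} : \hA/p^n \hA \longrightarrow E_n(R) \quad \text{and} \quad \halpha^{\hA} : \hA \longrightarrow E(R)
\]
are an isomorphism of rings and a topological isomorphism of topological rings respectively. Since $A$ has no $p$-torsion, the canonical map induces $A/p^n A \silo \hA/p^n \hA$ for each $n$, and so $\hA = \varprojlim_n A/p^n A$.

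It remains to identify the map $\alpha_n$ for the presentation by $A$ with $\alpha_n^{\hA}$. This is the compatibility of $\alpha$ under the maps $A \to \hA$: by functoriality of the Cartier--Dieudonné homomorphism $f$ of Proposition \ref{t12} (applied to $A \to \hA$, which commutes with $\phi$ and $\hat\phi$) and of $E(\pi)$, the diagram
\[
\xymatrix{
A \ar[r]^-{f} \ar[d] & E(A) \ar[r]^-{E(\pi_A)} \ar[d] & E(R) \ar@{=}[d] \\
\hA \ar[r]^-{\hat f} & E(\hA) \ar[r]^-{E(\pi_{\hA})} & E(R)
}
\]
commutes, so $\alpha^A = \alpha^{\hA} \circ (A \to \hA)$. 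Passing to the quotient by $p^n$ identifies $\alpha_n$ with $\alpha_n^{\hA}$, so $\alpha_n$ is an isomorphism, and in the limit $\halpha : \hA \to E(R)$ is a topological isomorphism. The main obstacle is the verification that $\hat\phi$ is an automorphism of $\hA$; everything else is bookkeeping.
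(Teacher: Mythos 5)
Your proof is correct and follows the same strategy as the paper: check that $\hA$ is $p$-torsion free and that $\phi$ extends to an automorphism of $\hA$, then apply Theorem \ref{t55} to the presentation $0 \to p\hA \to \hA \to R \to 0$ and identify $A/p^nA$ with $\hA/p^n\hA$. You supply more detail than the paper (which disposes of the isomorphism claim for the Frobenius lift with ``an easy inductive argument'' on $A/p^nA$) and you add an explicit functoriality check identifying $\alpha_n^A$ with $\alpha_n^{\hA}$, which the paper leaves implicit; both are welcome but the argument is essentially the same.
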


\begin{proof}
The completion $\hA$ has no $p$-torsion as well and $\hA / p^n \hA = A / p^n A$. An easy inductive argument shows that the Frobenius lift is an isomorphism on each $A / p^n A$ and hence on $\hA$. Applying the preceding Theorem to the presentation $0 \to p \hA \to \hA \to R \to 0$ the Corollary follows.
\end{proof}

Thus Theorem \ref{t55} is a joint generalization of the well known Corollary \ref{t65nn} in Witt vector theory and of Corollary \ref{t19} which was pointed out in \cite{CD} .

The intersection $\bigcap_{n \ge 0} I^n$ is an interesting but somewhat mysterious ideal in $\Z R$. Using the preceding results we have the following side result:

\begin{proposition}
\label{t56}
Consider the standard resolution $0 \to I \to \Z R \to R \to 0$ of a perfect $\F_p$-algebra $R$. Then the map $R \to E (\F_p R) , r \mapsto \langle [r] \rangle$ induces an injective homomorphism of rings $\Z R \hookrightarrow E (\F_p R)$. Viewing it as an inclusion we have
\[
\bigcap_{n \ge 0} I^n = \Z R \cap E (I / p \Z R) \quad \text{in} \; E (\F_p R) \; .
\]
\end{proposition}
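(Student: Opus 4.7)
My plan is to first prove injectivity of $\beta$ by a $p$-adic valuation estimate, then derive the equality $\bigcap_n I^n = \Z R \cap E(I/p\Z R)$ formally from Corollaries \ref{t19} and \ref{t28nn}. The map $\beta\colon \Z R \to E(\F_p R)$ extending $r \mapsto \langle [r]\rangle$ factors as
$$\beta\colon \Z R \xrightarrow{f} E(\Z R) \longrightarrow E(\F_p R),$$
where $f$ is the ring homomorphism from Proposition \ref{t12} associated with the Frobenius lift $\phi\colon [r] \mapsto [r^p]$ on $\Z R$ (so the $p^\nu$-component of $f(x)$ equals $\phi^\nu(x)$), and the second arrow is induced by reduction modulo $p$; this is just diagram \eqref{eq:30} applied with $\F_p R$ in place of $R$. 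The $p^0$-component of $f(x)$ is $x$, hence $f$ is injective. Applying Corollary \ref{t28nn} to $0 \to p\Z R \to \Z R \to \F_p R \to 0$ identifies the kernel of the second arrow with $E(p\Z R)$, and injectivity of $\beta$ reduces to showing $f(\Z R) \cap E(p\Z R) = \{0\}$.

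The main estimate: since $\Z R$ is $\Z$-free, the $p$-adic valuation $v_p$ is well-defined on $\Z R$ with $\bigcap_k p^k \Z R = 0$. Perfection of $R$ makes Frobenius on $\F_p R$ bijective, so $\phi$ preserves $v_p$: if $x = p^N y$ with $y \notin p\Z R$, then $\overline{\phi(y)} = \overline{y}^{\,p} \ne 0$ in $\F_p R$, whence $v_p(\phi^\nu(x)) = v_p(x)$ for every $\nu$. Suppose $f(x) \in E(p\Z R)$. Since $p\Z R$ is $S_p$-torsion free, Proposition \ref{t2}(b) gives a unique expansion $f(x) = \sum_{n \ge 0} V^n \langle r_n\rangle$ with $r_n \in p\Z R$, and the $p^\nu$-ghost equation \eqref{eq:5} reads
$$\phi^\nu(x) = \sum_{n=0}^{\nu} p^n r_n^{p^{\nu - n}}.$$
Since each $r_n \in p\Z R$, the $n$-th summand has $v_p \ge n + p^{\nu-n}$, and the minimum of $n + p^{\nu-n}$ over $0 \le n \le \nu$ is $\nu + 1$, attained at $n = \nu$. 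Hence $v_p(\phi^\nu(x)) \ge \nu + 1$, and $\phi$-invariance of $v_p$ forces $v_p(x) \ge \nu+1$ for every $\nu$, so $x = 0$.

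For the equality, apply Corollary \ref{t28nn} to the exact sequence $0 \to I/p\Z R \to \F_p R \to R \to 0$ (exact because $p \in I$ so $p\Z R \subset I$) to identify $E(I/p\Z R)$ with the kernel of $E(\F_p R) \to E(R)$. The composition $\Z R \xrightarrow{\beta} E(\F_p R) \to E(R)$ sends $[r] \mapsto \langle r\rangle_R$ and thus agrees on generators with the ring homomorphism $\alpha_R$ of Proposition \ref{tx15}, and therefore coincides with it. Consequently $\beta^{-1}(E(I/p\Z R)) = \ker \alpha_R$, which equals $\bigcap_n I^n$ by Corollary \ref{t19}. The injectivity of $\beta$ upgrades this preimage identity to the stated equality of subsets of $E(\F_p R)$. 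The main obstacle is the valuation estimate, but once $\beta$ is factored through $E(\Z R)$ and the Witt coordinates given by $\Psi$ are invoked, the computation reduces to the one-line minimization above.
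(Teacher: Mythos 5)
Your proof is correct, and while the second half runs along the same logical lines as the paper's argument (Corollary \ref{t28nn} to identify $E(I/p\Z R)$ as $\ker(E(\F_p R)\to E(R))$ plus Corollary \ref{t19} to identify $\ker\alpha = \bigcap_n I^n$), your treatment of the injectivity of $\Z R \hookrightarrow E(\F_p R)$ is genuinely different. The paper gets injectivity by citing Corollary \ref{t65nn}, which gives the full isomorphism $\widehat{\Z R}\cong E(\F_p R)$ via the refined ghost map and the lifted Frobenius being an automorphism; you instead factor $\beta$ through $f:\Z R\to E(\Z R)$, use exactness of $E$ to reduce to $f(\Z R)\cap E(p\Z R)=0$, and then kill an element of $E(p\Z R)$ by a $p$-adic valuation estimate on ghost components. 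Two small supporting observations you use---that $\phi$ preserves $v_p$ on $\Z R$ (equivalently, that Frobenius is injective on $\F_p R$ because it is a mere reindexing by the bijection $r\mapsto r^p$), and that $\min_{0\le n\le\nu}(n+p^{\nu-n})=\nu+1$---are correct and worth spelling out, as you did. What your route buys is economy: you never invoke the full strength of Corollary \ref{t65nn}, nor do you need the Mittag--Leffler argument the paper uses when passing the diagram of truncated Witt rings to the inverse limit. What the paper's route buys is the explicit term-by-term isomorphism $I^n/p^n\Z R\cong E_n(I/p\Z R)$, which it exploits to observe (as a byproduct) that the system $(I^n/p^n\Z R)$ is Mittag--Leffler; your proof does not recover this extra information. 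One very minor imprecision: the factorization $\beta=E(\mathrm{proj})\circ f$ is not literally ``diagram \eqref{eq:30} with $\F_p R$ in place of $R$'' (that diagram would start from $\Z(\F_p R)$), but the identity you want does follow directly by checking on generators $[r]$, using that $f([r])=\langle [r]\rangle$ and that $E$ of a ring map commutes with the Teichm\"uller character. This does not affect correctness.
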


\begin{proof}
Since $\Z R$ injects into $\widehat{\Z R}$, Corollary \ref{t65nn} implies that the natural map $\Z R \to E (\F_p R)$ is injective. We have a commutative diagram with exact rows where the vertical arrows are isomorphisms
\begin{equation}
\label{eq:xxx3}
\xymatrix{
0 \ar[r] & I^n / p^n \Z R \ar[d]^{\wr} \ar[r] & \Z R / p^n \Z R \ar[d]^{\wr} \ar[r] & \Z R / I^n \ar[d]^{\wr} \ar[r] & 0 \\
0 \ar[r] & E_n (I / p \Z R) \ar[r] & E_n (\F_p R) \ar[r] & E_n (R) \ar[r] & 0 
}
\end{equation}
This follows from Corollaries \ref{t28nn} and \ref{t19} and \ref{t65nn}. We view \eqref{eq:xxx3} as a diagram of projective systems over $n$ under the canonical projections. The system $(E_n (I / p \Z R)_{n \ge 1}$ has surjective transition maps and is therefore Mittag--Leffler. The same is then true for the isomorphic system $(I^n / p^n \Z R)_{n \ge 1}$. With a direct proof this was observed at the end of \cite{CD} at least in the unital case. Passing to projective limits we get a diagram with exact rows:
\[
\xymatrix{
0 \ar[r] & \varprojlim_n I^n / p^n \Z R \ar[d]^{\wr} \ar[r] & \widehat{\Z R} \ar[d]^{\wr} \ar[r] & \varprojlim_n \Z R / I^n \ar[d]^{\wr} \ar[r] & 0 \\
0 \ar[r] & E (I / p \Z R) \ar[r] & E (\F_p R) \ar[r] & E (R) \ar[r] & 0 \\
 & & \Z R \ar@{_{(}->}[u] \ar[ur]^{\alpha} & & 
 }
\]
We conclude
\[
\bigcap_{n \ge 0} I^n = \Ker \alpha  = \Z R  \cap \Ker (E (\F_p R) \to E (R)) = \Z R \cap E (I / p \Z R) \; .
\]
\end{proof}

\section{The $p$-typical case, non-perfect $\F_p$-algebras} \label{sec:7}
In this section we study the canonical map $\alpha_n : \Z R / I^n \to E_n (R)$ in Proposition \ref{tx15} for certain classes of not necessarily perfect $\F_p$-algebras $R$. We first look at the kernel of $\alpha_n$. Consider the following ideals for $n \ge 1$
\[
I_n = \phi^{1-n} (I^n) = \{ a \in \Z R \mid \phi^{n-1} (a) \in I^n \} \supset I^n \; .
\]

\begin{theorem}
\label{t20}
Let $R$ be an $\F_p$-algebra whose Frobenius endomorphism $\phi$ is injective e.g. a reduced algebra. Then the following assertions hold:\\
a) $\Ker \alpha_n = I_n / I^n$ i.e. $\alpha : \Z R \to E (R)$ induces an injective homomorphism of rings $\beta_n : \Z R / I_n \hookrightarrow E_n (R)$
for any $n \ge 1$.\\
b) $I_1 = I$ and $I_n \cdot I_m \subset I_{n+m}$ for $n,m \ge 1$.\\
c) $I_n = \{ a \in \Z R \mid \phi^i (a) \in I^n \; \text{for some} \; i \ge 0 \}$.\\
d) For $n \ge 1$ we have $I_n \supset I_{n+1}$.
\end{theorem}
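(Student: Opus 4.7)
The plan is to deduce (d) as a formal consequence of part (a), which identifies $\Ker \alpha_n$ with $I_n / I^n$.

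First, I would recall from Proposition \ref{tx15} that $\alpha(I^n) \subset V^n E(R)$, so the ring homomorphism $\alpha : \Z R \to E(R)$ descends to maps $\alpha_n : \Z R/I^n \to E_n(R)$ for every $n$; here I use the identification $E_n(R) = E(R)/V^n E(R)$ given by Corollary \ref{t27n}. Because all the $\alpha_n$ arise from the single map $\alpha$ by reduction modulo successively smaller subgroups, the square
\[
\xymatrix{
\Z R / I^{n+1} \ar[r]^{\alpha_{n+1}} \ar[d] & E_{n+1}(R) \ar[d] \\
\Z R / I^n \ar[r]^{\alpha_n} & E_n(R)
}
\]
commutes. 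The left vertical arrow is the projection induced by $I^{n+1} \subset I^n$, and the right one is the truncation projection furnished by Corollary \ref{t27n}, corresponding to the inclusion $V^n E(R) \supset V^{n+1} E(R)$.

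Next, I would chase an element $a \in I_{n+1}$: by part (a) at level $n+1$ its class lies in $\Ker \alpha_{n+1}$, so $\alpha_{n+1}(a + I^{n+1}) = 0$. Applying the right vertical projection and using commutativity, $\alpha_n(a + I^n) = 0$. Invoking part (a) now at level $n$, this forces $a + I^n \in I_n / I^n$, i.e.\ $a \in I_n$. Hence $I_{n+1} \subset I_n$.

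The main obstacle is not located in (d) itself but in the proof of (a), on which this reduction depends. Once (a) is at hand, (d) is a one-line diagram chase and no additional injectivity property of $\phi$ on $\Z R / I^n$ needs to be proven in isolation; the interaction of $\phi$ with powers of $I$ is packaged entirely inside the kernel computation of part (a), which is in turn controlled by the injectivity of $F = E(\phi)$ on $E(R)$ via the exactness Corollary \ref{t28nn}.
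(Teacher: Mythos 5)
Your argument is correct, and it elaborates one of the two routes the paper alludes to but does not spell out; the paper's proof of (d) consists of the single remark that it ``follows from a) or c).'' Your diagram chase is a sound way to extract (d) from (a): the square
\[
\xymatrix{
\Z R / I^{n+1} \ar[r]^{\alpha_{n+1}} \ar[d] & E_{n+1}(R) \ar[d] \\
\Z R / I^n \ar[r]^{\alpha_n} & E_n(R)
}
\]
does commute, since all four maps are induced by $\alpha:\Z R \to E(R)$ together with the inclusions $I^{n+1}\subset I^n$ and $V^{n+1}E(R)\subset V^n E(R)$ (the latter from Corollary \ref{t27n}), and then $a\in I_{n+1}=\Ker\alpha_{n+1}\cdot$(lifted to $\Z R$) forces $\alpha_n(a+I^n)=0$, hence $a\in I_n$ by (a) at level $n$.

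Worth noting, however, that the route through (c) is even more immediate and is perhaps what the authors primarily had in mind: if $a\in I_{n+1}$ then by (c) there is $i\geq 0$ with $\phi^i(a)\in I^{n+1}\subset I^n$, and the same $i$ witnesses $a\in I_n$ via (c) again. Your version buys nothing extra here --- both are one-line consequences once (a) or (c) is available --- but your remark that the whole content of (d) is packaged inside the kernel computation of (a) is a fair structural observation. Either derivation is acceptable, and you correctly identify that the real work in the theorem lies in parts (a) and (c), not in (d).
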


\begin{proof}
a) Lemma \ref{t18} implies that $\Ker \alpha_n \subset I_n / I^n$. For $a \in I_n$ we have $\phi^{n-1} (a) \in I^n$ and hence $E_n (\phi)^{n-1} (\alpha_n (\oa)) = \alpha_n (\phi^{n-1} (\oa)) = 0$ where $\oa = a \mod I^n$. The map $E_n (\phi)$ is injective on $E_n (R)$ because the bijection $\psi_n : R^n \silo E_n (R)$ given by $\psi_n (\bfr) = \sum^{n-1}_{i=0} V^i \langle r_i \rangle$ satisfies $\psi_n \verk (\phi \times \ldots \times \phi) = E_n (\phi) \verk \psi_n$ and $\phi \times \ldots \times \phi$ ($n$-times) is injective on $R^n$. Hence $\alpha_n (\oa) = 0$ and therefore $I_n / I^n \subset \Ker \alpha_n$.\\
b) follows from the definitions.\\
c) We need the "arithmetic derivation" $\delta : \Z R \to \Z R$ defined by 
\[
\delta (a) = \frac{1}{p} (\phi (a) - a^p)\; .
\]
It satisfies the following relations for $a,b \in \Z R$
\begin{equation}
\label{eq:38}
\delta (a+b) = \delta (a) + \delta (b) - \sum^{p-1}_{\nu =1} \frac{1}{p} {p \choose \nu} a^{\nu} b^{p-\nu}
\end{equation}
and
\begin{equation}
\label{eq:39}
\delta (ab) = \delta (a) \phi (b) + a^p \delta (b) \; .
\end{equation}
Formula \eqref{eq:38} implies that
\begin{equation}
\label{eq:40}
\delta (a+b) \equiv \delta (a) + \delta (b) \mod I^n \quad \text{for $a$ or $b$ in $I^n$.}
\end{equation}
Together with \eqref{eq:39} we find as in \cite{CD} that:
\begin{equation}
\label{eq:41}
\delta (I^n) \subset I^{n-1} \quad \text{for} \; n \ge 1 \; .
\end{equation}
We can now prove assertion c) by induction on $n \ge 1$. For all $i \ge 0$ we have $\phi^{-i} (I) = I$. Namely $\phi^i (a) \in I$ implies that $\phi^i (\oa) = 0$ in $R = \Z R / I$ for $\oa = a \mod I$. Since $\phi$ is assumed to be injective, we get $\oa = 0$ i.e. $a \in I$. Hence $\phi^{-i} (I) \subset I$. The reverse inclusion is obvious. Hence c) holds for $n = 1$. Assume that c) holds for some $n \ge 1$. If $a \in \Z R$ satisfies $\phi^i (a) \in I^{n+1}$ for some $i \ge 0$ then using \eqref{eq:41} we find
\[
\phi^i (\delta (a)) = \delta (\phi^i (a)) \in \delta (I^{n+1}) \subset I^n \; .
\]
Using the induction hypotheses we get $\delta (a) \in I_n$ i.e. $\phi^{n-1} (\delta (a)) \in I^n$. This implies
\[
\phi^n (a) - \phi^{n-1} (a)^p \in p I^n \subset I^{n+1} \; .
\]
We have $\phi^i (a) \in I^{n+1} \subset I^n$. Applying the induction hypotheses again we find that $a \in I_n$ i.e. $\phi^{n-1} (a) \in I^n$. Hence $\phi^{n-1} (a)^p \in I^{pn} \subset I^{n+1}$ and we conclude that $\phi^n (a) \in I^{n+1}$ i.e. that $a \in I_{n+1}$. The other inclusion in c) is clear.\\
Assertion d) follows from a) or c).
\end{proof}

The next result concerning the image of $\alpha_n$ is a special case of Theorem \ref{t11n} from the previous section which is directly implied by Umberto Zannier's proposition in the appendix.

\begin{theorem}
\label{t21}
Let $R$ be an $\oF_p$-algebra. Then the natural map $\Z R \to E_n (R)$ is surjective. If in addition the Frobenius endomorphism of $R$ is injective, the map $\beta_n : \Z R / I_n \to E_n (R)$ is an isomorphism. In this case $\alpha$ induces a topological isomorphism
\begin{equation}
\label{eq:42}
\halpha : \varprojlim_{n} \Z R / I_n \silo E (R) \; .
\end{equation}
\end{theorem}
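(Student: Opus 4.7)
The first assertion is essentially Theorem~\ref{t11n}: for any $\oF_p$-algebra $R$ and any finite truncation set, the Teichm\"uller-induced map $\Z R \to E_n(R)$ is surjective, and this is precisely Zannier's polynomial representation result from the appendix applied through the group isomorphism $\nu_{R,S_n^{(p)}} : \hU(R)/\hU_{p^{n-1}}(R) \silo E_n(R)$. So for (1) I would simply invoke Theorem~\ref{t11n} after observing that $S_p^{(n)}$ is finite.

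For the second assertion, the map $\alpha_n: \Z R/I^n \to E_n(R)$ of Proposition~\ref{tx15} factors uniquely as
\[
\Z R / I^n \twoheadrightarrow \Z R/I_n \xrightarrow{\beta_n} E_n(R),
\]
since $I_n \supset I^n$ and by Theorem~\ref{t20}(a) the kernel of $\alpha_n$ is exactly $I_n/I^n$. Injectivity of $\beta_n$ is therefore immediate from Theorem~\ref{t20}(a); the assumption that Frobenius is injective on $R$ is only used here. Surjectivity of $\beta_n$ follows by composing the surjection from (1) with the factorization above: $\beta_n$ is surjective because $\alpha_n = \beta_n \circ \proj$ is. Hence $\beta_n$ is a bijective ring homomorphism, i.e.\ an isomorphism.

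For the third assertion, I would take inverse limits. By Theorem~\ref{t20}(d) the ideals $I_n$ form a decreasing chain in $\Z R$, and the quotients $\Z R/I_n$ form an inverse system of discrete rings under the natural surjections $\Z R/I_{n+1} \twoheadrightarrow \Z R/I_n$; equip $\varprojlim_n \Z R/I_n$ with the projective limit topology. The maps $\beta_n$ are compatible with the projections $E_{n+1}(R) \twoheadrightarrow E_n(R)$ on the right (this is essentially the functoriality built into the construction of $\alpha$, together with the commutativity of the diagram defining $\alpha_n$), so they assemble into a continuous ring homomorphism
\[
\hat\beta : \varprojlim_n \Z R/I_n \longrightarrow \varprojlim_n E_n(R).
\]
By (2) each $\beta_n$ is a bijection of discrete rings, so $\hat\beta$ is a homeomorphism and a ring isomorphism. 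Finally, by the topological identification \eqref{eq:8} we have $E(R) = \varprojlim_T E_T(R) = \varprojlim_n E_n(R)$, so $\hat\beta$ gives the desired topological isomorphism $\hat\alpha$.

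The only nontrivial input is (1), and for that the heavy lifting has already been done in the appendix; the rest is assembly via Theorem~\ref{t20}(a),(d) and the continuity statement \eqref{eq:8}. The main point to double-check is the compatibility of $\beta_n$ with the transition maps on both sides, which reduces to the functoriality of $\alpha$ in the truncation set and the fact that the preimage of $I^n$ under $\phi^{n-1}$ projects onto the preimage of $I^{n-1}$ under $\phi^{n-2}$ modulo $I^{n-1}$ (a straightforward consequence of $I_n \supset I_{n+1}$ rephrased via the definitions).
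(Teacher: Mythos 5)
Your proof is correct and takes essentially the same route as the paper, which gives no separate proof for this theorem but instead lets it follow directly from the two preceding results: surjectivity of $\Z R \to E_n(R)$ is the $p$-typical special case of Theorem~\ref{t11n} (via Zannier's proposition), injectivity of $\beta_n$ is Theorem~\ref{t20}(a), and the topological isomorphism $\halpha$ is the passage to projective limits using Theorem~\ref{t20}(d) and \eqref{eq:8}, exactly as you assemble it.
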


Since $E_n (R) \cong W_n (R)$ and $E (R) \cong W (R)$ this gives new descriptions of the rings of $p$-typical Witt vectors for such algebras. Under the isomorphism $\halpha$ the Frobenius endomorphism $F = E (\phi)$ on $E (R)$ corresponds to the endomorphism of $\varprojlim_n \Z R / I_n$ induced by the Frobenius lift $\phi$ of $\Z R$. 

The endomorphism $V$ of $\varprojlim_n \Z R / I_n$ corresponding to the Verschiebung $V$ on $E (R)$ is obtained as follows. Let $\oR = \varinjlim_{\phi} R$ be the inductive limit of the inductive system $R \overset{\phi}{\hookrightarrow} R \overset{\phi}{\hookrightarrow} R \hookrightarrow \ldots$ It is a perfect ring. Consider the exact sequence $0 \to \oI \to \Z \oR \to \oR \to 0$. By Corollary \ref{t19} we have an isomorphism 
\[
\varprojlim_n \Z \oR / \oI^n \silo E (\oR) \; .
\]
Therefore the Verschiebung on $\varprojlim \Z \oR / \oI^n$ is given by $p \ophi^{-1}$ where $\ophi$ is the Frobenius lift to $\Z \oR$ of the Frobenius automorphism $\ophi$ of $\oR$. 

Viewing $R$ as embedded in $\oR$ in degree zero, we have an inclusion $E (R) = W (R) \subset W (\oR) = E (\oR)$ and a commutative diagram 
\begin{equation}
\label{eq:43}
\xymatrix{
\varprojlim_n \Z R / I_n \ar[r]^{\sim} \ar@{^{(}->}[d] & E (R) \ar@{^{(}->}[d]\\
\varprojlim_n \Z \oR / \oI^n \ar[r]^{\sim} & E (\oR) \; .
}
\end{equation}
In particular $V = p \ophi^{-1}$ respects $\varprojlim_n \Z R / I_n$ and induces the Verschiebung. Similarly, there is a commutative diagram
\[
\xymatrix{
\Z R / I_n \ar[r]^{\sim} \ar@{^{(}->}[d] & E_n (R) \ar@{^{(}->}[d] \\
\Z \oR / \oI^n \ar[r]^{\sim} & E_n (\oR) \; .
}
\]
It implies that we have $I_n = \Z R \cap \oI^n$, a fact which can also be proved directly with a little effort.
\section*{Appendix: A factorization result for polynomials}
\begin{center} {Umberto Zannier}
\end{center}

The aim of this Appendix is to prove the Proposition below, providing a certain product decomposition for representatives $\mod t^m$ of polynomials in $t$ over suitable rings. This result is used in Theorems \ref{t11n} and \ref{t21} of the present paper. We begin with notations and some lemmas.

Let $p$ be a prime number and let $\eo$ be the valuation ring of the maximal unramified extension of $\Q_p$ in a given algebraic closure $\oQ_p$. Then 
the residue field of $\eo$, denoted $k$,  is an algebraic closure of $\F_p$. 

\medskip

\noindent{\bf Lemma 1.} {\it Let $N,m$ be   positive integers. There exist a finite set $I$ and  elements $a_i,b_i\in \eo$, $i\in I$,  such that, for a variable $x$, we have in the ring $\eo[x]$, 
$$
\sum_{i\in I} (a_i+b_ix)^l\equiv 0 \mod {p^N},\qquad  l=1,\ldots ,  m-1,
$$
$$
\sum_{i\in I}(a_i+b_ix)^m\equiv mx\mod{p^N}.
$$
}

\medskip

\noindent{\it Proof.}  Let us first assume $m\not\equiv 1\mod p$ and let us express (the still unknown) $I$ as a disjoint union of finite sets $R,S,T$ with the properties that 

(i) $|S\cup T|$ is divisible by $p^N$, 

(ii) $a_i=0$ for $i\in T$, 

(iii) $b_i=0$ for $i\in R$, 

(iv) $b_i=1$ for $i\in S\cup T$.

Then $b_i^2=b_i$ for all $i\in I$, so that, as is readily checked,  the above requirements amount to find disjoint $R,S$ and $a_i\in \eo$, for $i\in U:=R\cup S$, such that
$$
\sum_Ua_i^l\equiv 0\mod{p^N}, \qquad l=1,\ldots ,m,
$$
$$
\sum_Sa_i^l\equiv 0\mod{p^N}, \qquad l=1,\ldots ,m-2,
$$
$$
\sum_Sa_i^{m-1}\equiv 1\mod{p^N}.
$$
$T$ is then chosen so that $|S \cup T|$ is divisible by $p^N$.

Now, we contend that we can certainly satisfy the second set of equations and the last equation. Indeed, since $p$ does not divide $m-1$, the group $\Theta$ of $(m-1)$-th roots of unity (in an algebraic closure of $\Q_p$)   is contained in $\eo$. Let us then define $S$ as a disjoint union of $g$ copies  $\Theta_1,\ldots ,\Theta_g$ of $\Theta$, putting $a_\theta=\theta$ for $\theta\in\Theta_h$. Then plainly the second set of equations is satisfied (actually with equality in place of congruence), whereas $\sum_Sa_i^{m-1}=(m-1)g$. Since $m-1$ is prime to $p$ we can choose the positive integer $g$ as an inverse to $m-1$ modulo $p^N$, proving the claim. 


 Finally, to satisfy also the first set of equations, we can choose $R=S'$ as a disjoint copy of $S$ setting $a_{i'}=\lambda a_i$, where $\lambda\in\eo$ is such that $\lambda^{m-1}=-1$. 

\medskip

Let us now consider the case when $m\equiv 1\mod p$. 
Now,  the set $\Gamma$ of $m$-th roots of unity is contained in $\eo$. We take $I$ to be  the disjoint union of $g$ copies  $\Gamma_1,\ldots ,\Gamma_g$  of $\Gamma$ for a number $g$ to be chosen later, and we put, for $\gamma\in\Gamma_h$, $a_\gamma =\gamma\alpha_h$, $b_\gamma:=\gamma \beta_h$, with $\alpha_h,\beta_h$ still to be chosen. 
Note that
$$
\sum_I(a_i+b_ix)^l=\sum_{h=1}^g(\alpha_h+\beta_hx)^l\sum_{\gamma\in\Gamma}\gamma^l.
$$
This vanishes for $l=1,\ldots ,m-1$ and is $=m\sum_{h=1}^g(\alpha_h+\beta_hx)^m$ for $l=m$. Thus it suffices that
$$
\sum_{h=1}^g(\alpha_h+\beta_hx)^m\equiv x\mod{p^N}.
$$
Put now $\beta_h=\alpha_h\xi_h$, so we want that $\sum_{h=1}^g\alpha_h^m(1+\xi_hx)^m \equiv x \mod p^N$, which may be rephrased by saying that the vector $(\alpha_h^m)_h\in \eo^g$ is orthogonal modulo $p^N$ to the vectors $(\xi_h^l)_h$, for $l=0,2,\ldots ,m$, whereas its scalar product with $(\xi_h)_h$ is $\equiv m^{-1}\mod{p^N}$. 

These last conditions are easy to satisfy, even taking $g=m+1$ (while we could take it much larger) and even replacing the congruences by equalities. Indeed,   we may choose the $\xi_h$ as  any $g=m+1$ elements of $\eo$ which are pairwise incongruent modulo $p\eo$. Then the matrix $(\xi_h^l)_{h,l=1}^g$ is a Vandermonde matrix with determinant invertible in $\eo$, and we may then invert and choose the $\alpha_h^m$ in $\eo$ as wanted. To actually have $\alpha_h\in\eo$ (not merely $\alpha_h^m$), we may even choose the $\xi_h$ so that none of the $\alpha_h^m$ so obtained is in $p\eo$: indeed, the solution of the said system of linear equations may be expressed as a vector whose coordinates are ratios  of determinants in the variables $\xi_h$, none of which vanishes identically modulo $p$; then we may choose the $\xi_h\in\eo$ so that none of the reductions of these determinants vanishes in the residue field $k$. With such a choice, and 
since $m$ is prime to $p$,  we may then extract arbitrary $m$-th roots of the $\alpha_h^m$ so obtained,  and finally obtain suitable $\alpha_h\in\eo$. 

This concludes the proof of the lemma.\beweisende

\medskip

\noindent{\bf Remark 1.} The very last part of the proof is actually not needed for the application that we have in mind. Omitting it would lead to a similar statement, with the maximal unramifield extension replaced by the algebraic closure, which would be  enough for the arguments of the Proposition below.

\bigskip

\noindent{\bf Lemma 2.} {\it Let $\Lambda:=\eo[x,t]$ with independent variables $x,t$, and let $m,n$ be positive integers. Then the element $1-xt^m\in\Lambda$ may be written, modulo the ideal $(p^n,t^{m+1})\Lambda$,  as a product of  (finitely many) terms of the form $1-ft$, $f\in\eo [x]$.}

\medskip

\noindent{\it Proof.}  Set  $N=n+B$ in Lemma 1, where $B$ is the maximum integer such that $p^B\le m$, and let us choose $a_i,b_i$ as in the conclusion therein. Put $f_i=a_i+b_ix\in\eo[x]$ and define $F(t):=(1-xt^m)\prod_{i\in I}(1-f_it)^{-1}$. We interpret, as we clearly may,  the rational function $F(t)$ as an element of $\eo[x][[t]]$.  Taking the logarithmic derivative  with respect to $t$ and multiplying by $-t$ we obtain
$$
-{tF'(t)\over F(t)}={mxt^m\over 1-xt^m}-\sum_{i\in I}{f_it\over 1-f_it}.
$$
In the ring $\eo[x][[t]]$, for $s\in\eo[x]$ we may expand ${st^e\over 1-st^e}$ as $\sum_{l=1}^\infty s^lt^{le}$; if we do this in the right hand side of the last displayed formula we obtain
$$
-{tF'(t)\over F(t)}=\sum_{r=1}^\infty mx^rt^{rm} -\sum_{l=1}^\infty (\sum_{i\in I}(a_i+b_ix)^l)t^l.
$$
Taking into account the properties claimed by Lemma 1, we then obtain
$$
{tF'(t)\over F(t)}\equiv 0\mod{(p^N,t^{m+1})},
$$
where now the congruence has to be read in the ring $\eo[x][[t]]$.  Since $F(t)$ lies in this ring, this shows that 
$tF'(t)\in (p^N,t^{m+1})\eo[x][[t]]$. This means that in the $t$-expansion of $tF'(t)$ all the terms $\beta t^e$ which appear  ($\beta\in \eo[x]$)  have either $e\ge m+1$ or $e\le m$ and $\beta\in p^N\eo[x]$. 

But these terms $\beta t^e$ are in turn precisely those of the form $e\alpha t^e$ where $\alpha t^e$ is a term in the $t$-expansion of $F(t)$. Then, if $e\le m$ we must have $e\alpha\in p^N\eo[x]$ and, by the definition of $N$, this implies either $e=0$ or $\alpha\in p^n\eo[x]$. Hence $F(t)\equiv F(0)=1 \mod{(p^n,t^{m+1})}$. This concludes the proof. \beweisende

\noindent{\bf Proposition.} {\it Let $k$ be as above an algebraic closure of $\F_p$, and let ${R}$ be a ring containing $k$. Let $Q\in 1+ t R [t]$ have degree $\le m$. Then there exists a finite product $Q_1:=\prod_i (1-\rho_i t)$, $\rho_i\in {R}$,  such that $Q\equiv Q_1\mod{t^{m+1}{R}[t]}$.}

\medskip

Here, if $R$ has no unit element we embed it into a unital ring $\tR$ and let $1$ be the unit element of $\tR$.

\noindent{\it Proof.}  We argue by induction on $m$, where for $m=1$ we may choose $Q_1=Q$. Let us suppose $m>1$ and the result true up to $m-1$.

Writing $Q=q+at^m$, with $\deg q\le m-1$ and $a\in {R}$, we may apply the conclusion to $q$, obtaining a product $q_1$ of the required shape such that $q-q_1$ is divisible by $t^m$ in ${R}[t]$.  Since each term $(1-\rho t)^{-1}$, $\rho\in {R}$, lies in $1 + tR[[t]]$, we may thus write
$$
Qq_1^{-1}=1-bt^m+O(t^{m+1}), 
$$
for a certain $b\in {R}$, meaning that the remainder term is in $t^{m+1}{R}[[t]]$. 

Let us now use Lemma 2, with $n=1$,  which allows us to  write an identity
$$
1-xt^m=\prod_{i\in I}(1-f_i(x)t) +p\lambda_1(x,t)+t^{m+1}\lambda_2(x,t),
$$
where $I$ is a finite set, $f_i\in \eo[x]$, $\lambda_1,\lambda_2\in \eo[x,t]$. We may reduce modulo $p$ this identity and, denoting reduction  with  a tilde,  we obtain
$$
1-xt^m=\prod_{i\in I}(1-\tilde f_i(x)t) +t^{m+1}\tilde \lambda_2(x,t),
$$
where now $\tilde f_i\in k[x]$, $\tilde \lambda_2\in k[x,t]$. Further, we may specialise $x=b$ and obtain
$$
1-bt^m=\prod_{i\in I}(1-\sigma_it)+ t^{m+1}q_2(t)
$$
where $\sigma_i:=\tilde f_i(b)\in {R}$ and $q_2(t):=\tilde \lambda_2(b,t)\in {R}[t]$.  

Clearly, using this last identity in the right hand side of the displayed formula for $Qq_1^{-1}$ and multiplying by $q_1$,  we obtain what is needed, completing the induction and the proof. \beweisende

\bigskip

\noindent{\bf Remark 2.} The Proposition would not generally hold for ${R}=\F_p$, at any rate for any $m\ge 2p-1$. Indeed, let $r$ be the integer such that $p^{r-1}\le m<p^r$, and consider the possible polynomials $Q_1$. They have the shape $\prod_{a\in\F_p^*}(1-at)^{b_a}$ for integers $b_a\ge 0$. However, since $(1-at)^{p^r}=1-at^{p^r}=1+O(t^{m+1})$, for our purposes we may suppose that $0\le b_a\le p^r-1$. Then the number of polynomials $Q_1$ so obtained is $p^{r(p-1)}$. On the other hand, the number of polynomials $Q\in \F_p[t]$ of degree $\le m$ and with $Q(0)=1$ is $p^m$. Hence if  the conclusion of the Proposition holds, we must have $m\le r(p-1)$ and in particular $p^{r-1}\le r(p-1)$. This last yields $r\le 2$, hence $m\le 2(p-1)$.

A similar argument works for any finite field. This also shows that the ring $R = \Z_p$ would not be suitable; however one can show that the field $R = \Q_p$ would be.

One can get positive results for small $m$, and seemingly a slightly more involved argument than the above proves that $m=p$ is the largest possible value for $\F_p$ and $\Z_p$.

\medskip

 \medskip

A simple argument also shows that the Proposition does not hold for ${R}=\R$, already for $m=2$. Indeed, if $1+t^2$ is a product of terms $1-\rho_i t$, $\rho_i\in\R$ for $i=1,\ldots ,n$, modulo $t^3$, then $\sum_i\rho_i=0$, $\sum_{i<j}\rho_i\rho_j=1$. But then $\sum_i\rho_i^2=(\sum_i\rho_i)^2-2(\sum_{i<j}\rho_i\rho_j)=-2$, impossible. 

However, if one allows negative exponents of the factors $(1-\rho t)$ then $\R$ would become suitable. In positive characteristic, instead, this would not change anything, for $(1-\rho t)^{-1}\equiv (1-\rho t)^{q-1}\mod {t^q}$, for any $q$ which is a power of the characteristic. 


\begin{thebibliography}{Mum66}

\bibitem[B]{B}
James Borger.
\newblock The basic geometry of {W}itt vectors, {I}: {T}he affine case.
\newblock {\em Algebra Number Theory}, 5(2):231--285, 2011.

\bibitem[CD]{CD}
Joachim Cuntz and Christopher Deninger.
\newblock An alternative to {W}itt vectors, 2013.
\newblock to appear in {M}\"unster {J}ournal of {M}athematics.

\bibitem[Ch]{Ch}
Andre Chatzistamatiou.
\newblock Big de {R}ham-{W}itt cohomology: basic results.
\newblock {\em Doc. Math.}, 19:567--599, 2014.

\bibitem[DK]{DK}
Christopher Davis and Kiran~S. Kedlaya.
\newblock On the {W}itt vector {F}robenius.
\newblock {\em Proc. Amer. Math. Soc.}, 142(7):2211--2226, 2014.

\bibitem[DLZ1]{DLZ1}
Christopher Davis, Andreas Langer, and Thomas Zink.
\newblock Overconvergent {W}itt vectors.
\newblock {\em J. Reine Angew. Math.}, 668:1--34, 2012.

\bibitem[DLZ2]{DLZ2}
Christopher Davis, Andreas Langer, and Thomas Zink.
\newblock Overconvergent de {R}ham-{W}itt cohomology.
\newblock {\em Ann. Sci. \'Ec. Norm. Sup\'er. (4)}, 44(2):197--262, 2011.

\bibitem[H1]{H1}
Lars Hesselholt.
\newblock Witt vectors of non-commutative rings and topological cyclic
  homology.
\newblock {\em Acta Math.}, 178(1):109--141, 1997.
\newblock Correction to: ``{W}itt vectors of non-commutative rings and
  topological cyclic homology'', Acta Math., 195, 2005, 55--60.

\bibitem[H2]{H}
Lars Hesselholt.
\newblock The big de {R}ham-{W}itt complex, 2010.
\newblock ar{X}iv:1006.312.

\bibitem[Ha]{Ha}
Michiel Hazewinkel.
\newblock {\em Formal groups and applications}, volume~78 of {\em Pure and
  Applied Mathematics}.
\newblock Academic Press, Inc. [Harcourt Brace Jovanovich, Publishers], New
  York-London, 1978.

\bibitem[Har]{Har}
G.~Harder.
\newblock Wittvektoren.
\newblock {\em Jahresber. Deutsch. Math.-Verein.}, 99(1):18--48, 1997.

\bibitem[I]{I}
Luc Illusie.
\newblock Complexe de de\thinspace {R}ham-{W}itt et cohomologie cristalline.
\newblock {\em Ann. Sci. \'Ecole Norm. Sup. (4)}, 12(4):501--661, 1979.

\bibitem[LZ]{LZ}
Andreas Langer and Thomas Zink.
\newblock De {R}ham-{W}itt cohomology for a proper and smooth morphism.
\newblock {\em J. Inst. Math. Jussieu}, 3(2):231--314, 2004.

\bibitem[M]{M}
David Mumford.
\newblock {\em Lectures on curves on an algebraic surface}.
\newblock With a section by G. M. Bergman. Annals of Mathematics Studies, No.
  59. Princeton University Press, Princeton, N.J., 1966.

\bibitem[R]{R}
Joseph Rabinoff.
\newblock The {T}heory of {W}itt vectors.
\newblock ar{X}iv:1409.7445.

\bibitem[S]{S}
Jean-Pierre Serre.
\newblock {\em Corps locaux}.
\newblock Hermann, Paris, 1968.
\newblock Deuxi{\`e}me {\'e}dition, Publications de l'Universit{\'e} de
  Nancago, No. VIII.

\bibitem[T]{T}
Oswald Teichm\"uller.
\newblock {\em \"Uber die {S}truktur diskret bewerteter perfekter {K}\"orper}.
\newblock {G}\"ottingen {V}andenhoeck \& {R}uprecht, 1936.

\bibitem[W1]{W1}
Ernst Witt.
\newblock Zyklische {K}\"orper und {A}lgebren der {C}harakteristik p vom {G}rad
  $p^n$. {S}truktur diskret bewerteter perfekter {K}\"orper mit voll\-kommenem
  {R}estklassenk\"orper der {C}harakteristik p.
\newblock {\em Journal f\"ur die reine und angewandte {M}athematik},
  176:126--140, 1937.

\bibitem[W2]{W2}
Ernst Witt.
\newblock {\em Vektorkalk\"ul und {E}ndomorphismen der
  {E}inspotenzreihengruppe}.
\newblock In: {E}rnst {W}itt. {C}ollected papers. {G}esammelte {A}bhandlungen.
  With an essay by G{\"u}nter Harder on Witt vectors, Edited and with a preface
  in English and German by Ina Kersten, Springer-Verlag, Berlin, 1998, pages
  xvi+420.

\end{thebibliography}

\end{document}